\theoremstyle{plain}
\newtheorem{theorem}{Theorem}[section]
\newtheorem{proposition}[theorem]{Proposition}
\newtheorem{lemma}[theorem]{Lemma}
\newtheorem{corollary}[theorem]{Corollary}
\newtheorem{remark}[theorem]{Remark}
\newtheorem{example}[theorem]{Example}
\numberwithin{equation}{section}
\numberwithin{table}{section}
\newcommand{\Lie}[1]{\operatorname{\rm{#1}}}
\newcommand{\lie}[1]{\operatorname{\mathfrak{#1}}}
\newcommand{\G}{\Lie{G}}
\newcommand{\LH}{\Lie{H}}
\newcommand{\LU}{\Lie{U}}
\newcommand{\GL}{\Lie{GL}}
\newcommand{\SO}{\Lie{SO}}
\newcommand{\Spin}{\Lie{Spin}}
\newcommand{\SL}{\Lie{SL}}
\newcommand{\PSL}{\Lie{PSL}}
\newcommand{\Sp}{\Lie{Sp}}
\newcommand{\SU}{\Lie{SU}}
\newcommand{\g}{\lie{g}}
\newcommand{\m}{\lie{m}}
\newcommand{\lh}{\lie{h}}
\newcommand{\gl}{\lie{gl}}
\newcommand{\so}{\lie{so}}
\newcommand{\su}{\lie{su}}
\newcommand{\lsl}{\lie{sl}}
\newcommand{\lsp}{\lie{sp}}
\newcommand{\bC}{{\mathbb C}}
\newcommand{\bH}{{\mathbb H}}
\newcommand{\bR}{{\mathbb R}}
\newcommand{\bZ}{{\mathbb Z}}
\newcommand{\pR}{\mathbb{RP}}
\newcommand{\pC}{\mathbb{CP}}
\newcommand{\bx}{\mathbf{x}}
\newcommand{\by}{\mathbf{y}}
\newcommand{\Dmod}{{\mathcal D}}
\newcommand{\Mmod}{{\mathcal M}}
\newcommand{\Nmod}{{\mathcal N}}
\newcommand{\Smod}{{\mathcal S}}
\newcolumntype{C}{>{$}c<{$}}
\newcolumntype{R}{>{$}r<{$}}
\DeclareMathOperator{\Hom}{Hom}
\DeclareMathOperator{\Ric}{Ric}
\DeclareMathOperator{\rank}{rk}
\DeclareMathOperator{\diag}{diag}
\newcommand{\hook}{{\lrcorner\,}}
\newcommand{\indoublebrackets}[1]{[\![#1]\!]}
\DeclarePairedDelimiter{\Span}{\langle}{\rangle}
\newcommand{\abs}[1]{\left\lvert#1\right\rvert}
\newcommand{\br}{\hspace{0pt}}
\newcommand{\bdash}{-\br} 
\newcommand{\D}[2]{\frac{\partial #1}{\partial #2}}
\begin{document}

\title[Invariant torsion and \( \G_2 \)-metrics]{Invariant torsion and \( \G_2 \)-metrics}

\author{Diego Conti}
\address[D.~Conti]{ Dipartimento di Matematica e Applicazioni\\ Universit\`a di Milano
  Bicocca\\ Via Cozzi 53\\ 20125 Milano\\ Italy.}
\email{diego.conti@unimib.it}

\author{Thomas Bruun Madsen}
\address[T.\,B.~Madsen]{Department of Mathematics\\ Aarhus University\\
Ny Munkegade 118, Bldg 1530\\ 8000 Aarhus\\ Denmark.}
\email{thomas.madsen@math.au.dk}

\thanks{We are grateful to Simon Salamon for many invaluable discussions and suggestions. This work was partially supported by FIRB 2012 ``Geometria differenziale e teoria geometrica delle funzioni'' and PRIN 2010-2011 ``Variet\`a reali e complesse: geometria, topologia e analisi armonica''. TBM gratefully acknowledges financial support from \textsc{villum fonden}.}

\begin{abstract}
We introduce and study a notion of invariant intrinsic torsion geometry which appears, for instance, in connection with the  Bryant-Salamon metric on the spinor bundle over \( S^3 \). This space is foliated by six-dimensional hypersurfaces, each of which carries a particular type of \( \SO(3) \)-structure; the intrinsic torsion is invariant under \( \SO(3) \). The last condition is sufficient to imply local homogeneity of such geometries, and this allows us to give a classification. We close the circle by showing that the Bryant-Salamon metric is the unique complete metric with holonomy \( \G_2 \) that arises from \( \SO(3) \)-structures with invariant intrinsic torsion.
\end{abstract}

\maketitle

\section{Introduction}
\label{sec:intro}

It was Bryant and Salamon \cite{Bryant-S:excep} who constructed the first known examples of complete, irreducible metrics with Riemannian holonomy equal to the  exceptional Lie group \( \G_2 \). All their metrics asymptotically look like that of a Riemannian cone over one of the homogeneous nearly-K\"ahler six-manifolds \( \pC(3) \), \( F_{1,2}(\bC^3) \) or \( S^3\times S^3 \). 
Whilst \( \G_2 \)-manifolds have received much attention over the past 25 years, additional examples of complete, non-compact manifolds with \( \G_2 \)-holonomy are still relatively few \cite{Brandhuber:Gaugetheoryat,Cvetic-al:G2,Bazaikin-B:completeG2}. It therefore seems sensible to return to the original examples so as to better understand what makes them so special.
The starting point of this paper is the Bryant-Salamon metric approaching the cone on \( S^3\times S^3 \). This can be viewed as a metric on the spinor bundle of \( S^3 \) which we may write as
\begin{equation*}
\Spin(4)\times_{\Sp(1)} \bH;
\end{equation*}
here the action of \( \Spin(4) \) commutes with the right action of \( \Sp(1) \) on \( \bH \). As a consequence, the spinor bundle has the structure of a cohomogeneity one manifold with principal orbit \( \Spin(4)\times \Sp(1)\slash\Sp(1) \). The isotropy representation is the sum of two copies of the adjoint representation \( \lsp(1) \); this factors through the quotient \( \SO(3)=\Sp(1)\slash \bZ_2 \) so as to give rise to \( \SO(3) \)-structures on hypersurfaces of the spinor bundle. The structure  determined by this cohomogeneity one action corresponds to the nearly-K\"ahler structure on \( S^3\times S^3 \), but other invariant \( \SO(3) \)-structures can be obtained by composing with an \( \SO(3) \)-equivariant isomorphism of \( \bR^3\oplus\bR^3 \). More generally, \( \SO(3) \)-structures come in \( \GL(2,\bR) \)-families, since \( \GL(2,\bR) \) is the centralizer of \( \SO(3) \) in \( \GL(6,\bR) \). This is at the origin of the flexibility required to construct the Bryant-Salamon metric. Another way of viewing this fact is by observing that \( \SO(3) \) preserves more than one metric on \( \bR^3\oplus \bR^3 \).

The \( \SO(3) \)-structures associated with the Bryant-Salamon metric are homogeneous. In particular, they come with an Ambrose-Singer connection, characterized by having parallel torsion and curvature. Their intrinsic torsion, which is a map from the reduced frame bundle to 
\begin{equation*}
(\bR^6)^*\otimes(\so(6)\slash\so(3)),
\end{equation*}
is therefore parallel and so determines an \( \SO(3) \)-invariant element. In this sense, \( \SO(3) \)-structures with invariant intrinsic torsion generalize the structures on \( S^3\times S^3 \) that give rise to the Bryant-Salamon metric. 

A natural source of \( \SO(3) \)-structures comes from Lie groups. Lie algebras of dimension six are characterized by the Chevalley-Eilenberg operator  \( d\colon \Lambda(\bR^6)^*\to \Lambda(\bR^6)^* \), and we may think of \( d \) as representing a Lie algebra with a fixed frame and so a natural \( \SO(3) \)-structure. If \( d \) is \( \SO(3) \)-equivariant, the associated \( \SO(3) \)-structure has invariant intrinsic torsion. It turns out that any \( \SO(3) \)-structure with invariant intrinsic torsion is locally equivalent to one obtained in this way. More precisely, non-flat \( \SO(3) \)-structures with invariant intrinsic torsion are parameterized, up to scale, by an element of \( \pR^5 \) lying in a subvariety \( \Mmod \) cut out by the condition \( d^2=0 \). Via the Segre embedding, this subvariety corresponds to \( \pR^1\times \pR^2 \). If completeness is assumed, the classification result can be made global and turns out to be closely related to the classification of naturally reductive homogeneous spaces of dimension six, recently obtained by Agricola, Ferreira and Friedrich \cite{Agricola-al:NaturallyReductive}. Indeed, to each element of \( \Mmod \) we can associate a unique naturally reductive homogeneous structure; our approach then results in a geometric way of interpreting the related part of their classification in terms of algebraic subvarieties.

In order to exploit the \( \GL(2,\bR) \)-invariance, it is convenient to identify \( \pR^n \) with the space of homogeneous degree \( n \) polynomials in two variables. The intrinsic torsion is an element of \( \bR^4 \). Projectively, it is given by a natural \( \GL(2,\bR) \)-equivariant map: 
\begin{equation}
\label{eqn:polymultiplication}
\pR^1\times \pR^2\to\pR^3,
\end{equation}
which is determined by polynomial multiplication \( (\bx ,\by )\mapsto \mathbf{xy} \). The actual connected, simply-connected Lie group that is obtained from the pair \( (\bx ,\by ) \) depends on the power of \(\bx  \) that divides \( \by  \) as well as the discriminant \( \Delta=\Delta(\by ) \).

\begin{table}[htp]
 \centering
  \begin{tabular}{CCR}
\toprule
\bx ^2\mid \by  & \Delta=0 & (0,0,0,12,13,23)\\
\bx \mid \by  & \Delta>0 & \SO(3)\ltimes\bR^3\\
\bx \nmid \by & \Delta=0 & \SO(3)\times\bR^3\\
\bx \nmid \by & \Delta>0 & \SU(2)\times\SU(2)\\
\bx \nmid \by  & \Delta<0 & \SL(2,\bC)\\
\bottomrule
\end{tabular}
\caption{The five models of invariant intrinsic torsion geometry.}
\label{table:poly}
\end{table}

Each row in Table \ref{table:poly} represents a single \( \GL(2,\bR) \)-orbit. In this way there are, up to this symmetry, precisely five non-Abelian models. However, metric properties are not invariant under \( \GL(2,\bR) \). In particular, we show that the Einstein metrics correspond to three \( \LU(1) \)-orbits in \( \pR^3 \); the geometry is \( S^3\times S^3 \) with either its nearly-K\"ahler metric or its bi-invariant metric, and the latter can appear in two ways.

The \( \GL(2,\bR) \)-symmetry is also lost  when computing the local \( \G_2 \)-metric. In fact, the Bryant-Salamon metric is obtained by integrating the Hitchin flow, which is a flow on the space of half-flat \( \SU(3) \)-structures. The half-flat condition reads as a linear condition on the \( \SO(3) \)-intrinsic torsion and determines a subset in \( \pR^1\times\pR^2 \) which can be interpreted as the blow-up of \( \pR^2 \) at a point; in the non-projective setup, which also includes the Abelian case, this gives an \( \bR^3\subset\bR^4 \). The Hitchin flow determines a flow on this \( \bR^3 \). Here the \( \GL(2,\bR) \)-symmetry breaks down to a discrete group, namely the symmetric group \( \Sigma_3 \) formed of permutations in three letters. The maximally invariant solution gives rise to the nearly-K\"ahler \( S^3\times S^3 \), whose evolution is simply the cone. The Bryant-Salamon metric appears as a \( \Sigma_2 \)-invariant solution. On the open subset \( \Delta<0 \), one can recover the metrics of \cite{Belgun:Ontheboundary}.

Strictly speaking, what we are evolving is not an \( \SU(3) \)-structure, but rather the intrinsic torsion (of an \( \SO(3) \)-structure). The actual solution, in terms of structures, is obtained by lifting the integral curves under the non-projective analogue of \eqref{eqn:polymultiplication}. This approach allows us to give a precise explanation of the ``triality'' symmetry appearing in \cite{Atiyah-W:G2metrics}; this symmetry is the reason why Brandhuber et al. \cite{Brandhuber:Gaugetheoryat} were able to construct, seemingly, different solutions whose evolution equations look exactly the same. From our point of view, there is little reason to distinguish between these cases: we obtain the same \( \G_2 \)-metric on the spinor bundle, but written in terms of three different cohomogeneity one actions of \( \SU(2)\times\SU(2) \).

By choosing appropriate one-parameter subgroups, the \( \GL(2,\bR) \)-symme\bdash{}try can also be used to generate flows that interpolate between the different Lie algebras of \( \Mmod \); such flows are sometimes referred to as Lie algebra ``contractions''. Motivated by their occurrence in the physics literature on \( \G_2 \)-metrics \cite{Gibbons-al:domainwalls,Chong-al:contr}, we classify the associated orbits of half-flat structures. Thinking of the half-flat structures as \( \bR^3\subset\bR^4 \), the union of these orbits comes in three families, each of which forms a two-plane. It turns out that only one of these planes is preserved by the Hitchin flow, meaning that the flow itself may be viewed as a contraction up to rescaling in this case. In the other two cases, contractions can be used as a way of generating initial data for Hitchin's flow equations that give rise to different solutions.

In the final result of the paper, we classify the complete holonomy \( \G_2 \)-metrics which are determined by a half-flat \( \SO(3) \)-structure with invariant intrinsic torsion. We show that the only full holonomy \( \G_2 \)-metric is the Bryant-Salamon metric. This classification result is different from, but consistent with, the uniqueness result by Karigiannis and Lotay \cite[Corollary 6.4]{Karigiannis-L:Deform_coni}. It also supplements the classification of cohomogeneity one \( \G_2 \)-structures obtained by Cleyton and Swann \cite{Cleyton-S:G2} who considered only actions of simple Lie groups.

\section{The hypersurfaces: \( \SO(3) \)-structures}
\label{sec:intrtor}

Let \( V \) denote the three-dimensional irreducible representation of \( \SO(3) \). In this paper, we are concerned with six-manifolds \( M \) whose tangent space at each point is modelled on
\begin{equation}
\label{eq:SO3tang}
 T_mM\cong T=V\oplus V.
\end{equation}
As we are free to rescale the metric on each summand \( V \), this representation theoretical definition of an \( \SO(3) \)-structure does not determine a canonical inclusion \( \SO(3) \subset \SO(6) \).
We shall address this subtlety shortly. Before doing so, however, we will explain a geometric way of achieving \eqref{eq:SO3tang}.

We start out by considering the non-degenerate \( 2 \)-form
\begin{equation}
\label{eq:defformsa}
\sigma=e^{12}+e^{34}+e^{56},
\end{equation}
together with a one-parameter family of simple \( 3 \)-forms obtained by letting \( \SO(2) \) act on the two-planes
\( \Span{e^{2i-1},e^{2i}} \):
\begin{equation*}
 \eta^{\theta}=(\cos_{\theta}e^1+\sin_{\theta}e^2)\wedge(\cos_{\theta}e^3+\sin_{\theta}e^4)\wedge(\cos_{\theta}e^5+\sin_{\theta}e^6).
\end{equation*}
If we fix three of these forms, chosen to be mutually related by rotations of \( \frac{2\pi}3 \), say, 
\begin{equation*}
\eta_0=\eta^0, \quad \eta_1=\eta^{\frac{2\pi}3}, \quad \eta_2=\eta^{-\frac{2\pi}3},
\end{equation*}
then we obtain a splitting of the form \eqref{eq:SO3tang}:

\begin{lemma}[\cite{Giovannini:SpecialStructuresAnd}]
\label{lem:def_forms}
The \( \GL(6,\bR) \)-stabilizers of the quadruplet \( (\sigma,\eta_0,\eta_1,\eta_2) \) intersect in a copy of \( \SO(3) \).
\qed
\end{lemma}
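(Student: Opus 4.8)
The plan is to compute the common stabilizer directly and identify it. First I would fix notation: write $\bR^6 = \Span{e^1,\dots,e^6}$ with dual basis $e_i$, and regard the pairs $(e^{2i-1}, e^{2i})$ as spanning three oriented planes $P_i$, $i=1,2,3$. The $2$-form $\sigma = e^{12}+e^{34}+e^{56}$ is the standard symplectic form, so $G_0 := \mathrm{Stab}_{\GL(6,\bR)}(\sigma) = \Sp(6,\bR)$. The problem thus reduces to determining the subgroup of $\Sp(6,\bR)$ fixing the three decomposable $3$-forms $\eta_0,\eta_1,\eta_2$, which are obtained from one another by the simultaneous $SO(2)$-rotation $R_\varphi$ in each plane $P_i$ by angles $0$, $\tfrac{2\pi}{3}$, $-\tfrac{2\pi}{3}$.

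The key observation is that the three angles $0, \tfrac{2\pi}{3}, -\tfrac{2\pi}{3}$ together with the fact that $\eta^\theta$ depends on $\theta$ through $\cos_\theta e^{2i-1} + \sin_\theta e^{2i}$ means the span of $\eta_0,\eta_1,\eta_2$ inside $\Lambda^3(\bR^6)^*$ is the same as the span of $\eta^\theta$ for \emph{all} $\theta$; equivalently, any element of $\Sp(6,\bR)$ fixing all three $\eta_j$ must commute with the circle action generated by the infinitesimal rotation $J \in \lsp(6,\bR)$ that sends $e^{2i-1}\mapsto e^{2i}\mapsto -e^{2i-1}$. Indeed, $\tfrac{d}{d\theta}\eta^\theta$ at the three sample points spans, together with the $\eta_j$ themselves, a two-dimensional family, so fixing $\eta_0,\eta_1,\eta_2$ forces an element to fix the whole curve $\{\eta^\theta\}$ and hence to commute with $J$. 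Together $\sigma$ and $J$ determine a compatible metric $g = \sigma(\cdot, J\cdot)$, so the stabilizer lands in $\Sp(6,\bR)\cap O(6) \cap Z(J) = \Un(3)$, acting on $\bC^3 = (\bR^6, J)$ in the standard way, with $e^{2i-1}+\mathrm{i}\,e^{2i}$ spanning the coordinate lines.

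It then remains to intersect $\Un(3)$ with the stabilizer of a single form, say $\eta_0 = e^{135}$ — equivalently the real part of the complex volume form $dz^1\wedge dz^2 \wedge dz^3$ up to the identification above. An element $A \in \Un(3)$ fixes $\eta_0$ iff it fixes $\mathrm{Re}(dz^1\wedge dz^2\wedge dz^3)$ and, by compatibility with $J$, also its imaginary part, hence $\det A = 1$; so we are in $\SU(3)$. But we must also keep track of the \emph{real} structure: the three planes $P_i$ are not permuted, they are each preserved as oriented real $2$-planes, because $\eta^\theta$ simultaneously ``rotates'' all of them by the same angle and the three sampled forms pin down each $P_i$ individually. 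An element of $\SU(3)$ preserving each complex coordinate line $\bC e_i$ and commuting with the real structure that is diagonal in the $e^j$ basis is an element of $\SO(3) \subset \SU(3)$ — the real orthogonal matrices of determinant one acting diagonally-blockwise — which is exactly the irreducible $V \oplus V$ picture. Conversely every such $\SO(3)$ fixes $\sigma$ and all $\eta^\theta$, so the intersection is precisely this $\SO(3)$.

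The main obstacle I anticipate is the bookkeeping in the last step: showing that fixing the three specific forms $\eta_0,\eta_1,\eta_2$ (as opposed to, say, being allowed to permute the planes $P_i$ or to act by a larger group commuting with $J$) really does cut the group down from $\SU(3)$ all the way to $\SO(3)$, and not to some intermediate subgroup such as a maximal torus or $\SO(3)$ times a finite group. Concretely one checks that an element of $\SU(3)$ preserving each $\bC e_i$ is diagonal, $\mathrm{diag}(e^{\mathrm{i}\alpha_1}, e^{\mathrm{i}\alpha_2}, e^{\mathrm{i}\alpha_3})$ with $\sum \alpha_i \in 2\pi\bZ$; imposing that it fix $\eta^\theta$ for all $\theta$ (not just $\theta = 0$) forces each $\alpha_i \in \{0, \pi\}$, and the determinant condition then forces an even number of $\pi$'s — giving exactly the finite group $(\bZ_2)^2 \subset \SO(3)$ that sits inside the diagonal maximal torus of $\SO(3)$. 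Allowing also permutations of the coordinate lines (which arise from elements of $\SU(3)$ still commuting with $J$ and the real structure) then fills this out to the full $\SO(3)$. Since the statement is attributed to \cite{Giovannini:SpecialStructuresAnd}, I would present this as a concise verification rather than belabour it, and indeed the cited reference already supplies the details, which is why the lemma is stated with \qed.
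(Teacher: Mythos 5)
Your reduction from \( \Sp(6,\bR) \) to the centralizer of \( J \) rests on the claim that \( \Span{\eta_0,\eta_1,\eta_2} \) coincides with the span of the whole family \( \{\eta^\theta\} \), and this is false. Expanding,
\begin{equation*}
\eta^\theta=\cos^3\theta\, e^{135}+\cos^2\theta\sin\theta\,(e^{136}+e^{145}+e^{235})+\cos\theta\sin^2\theta\,(e^{146}+e^{236}+e^{245})+\sin^3\theta\, e^{246},
\end{equation*}
so \( \theta\mapsto\eta^\theta \) traces a rational normal cubic whose linear span is \emph{four}-dimensional, whereas the three sampled points span only a three-dimensional subspace; concretely \( e^{246}=\eta^{\pi/2} \) does not lie in \( \Span{\eta_0,\eta_1,\eta_2} \). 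Hence fixing the three chosen forms does not force an element to fix every \( \eta^\theta \), and the step ``the stabilizer commutes with \( J \)'' is unproved. (Its conclusion happens to be true, but it needs a different argument: for instance, \( \tfrac43(\eta_0+\eta_1+\eta_2)=e^{135}-e^{146}-e^{236}-e^{245} \) has stabilizer \( \SL(3,\bC) \), and intersecting with \( \Sp(6,\bR) \) already lands you in \( \Un(3) \)-type subgroups.)

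The more serious problem is your assertion that the common stabilizer preserves each plane \( P_i=\Span{e^{2i-1},e^{2i}} \), i.e.\ each complex coordinate line. This cannot be right: the diagonal \( \SO(3) \) itself, which visibly stabilizes \( \sigma \) and every \( \eta^\theta \) (each triple \( \cos\theta\,e^{2i-1}+\sin\theta\,e^{2i} \), \( i=1,2,3 \), transforms as the standard three-dimensional representation, so its wedge is scaled by the determinant), mixes the three planes \( P_i \). Following your reduction through, diagonal unitaries contribute only the finite group \( (\bZ_2)^2 \), and adjoining coordinate permutations yields a group of order at most \( 24 \) --- a finite group can never be ``filled out'' to the three-dimensional group \( \SO(3) \) by permutations, so your last step is an impossibility rather than bookkeeping, and it signals that the hypothesis feeding into it is wrong. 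A correct way to finish, once inside \( \SU(3) \), is to note that the components of the \( \eta_j \) in \( \indoublebrackets{\Lambda^{2,1}_0}\cong\indoublebrackets{S^{2,0}} \) single out a nondegenerate complex quadratic form on \( \bC^3 \), whose stabilizer in \( \SU(3) \) is exactly \( \SO(3) \); equality then follows from the easy inclusion of \( \SO(3) \) in the common stabilizer. (The paper itself offers no proof, deferring to the cited thesis, so there is no in-text argument to compare against.)
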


The centralizer of \( \SO(3) \) in \( \GL(6,\bR) \) is a copy of \( \GL(2,\bR) \) which contains the above rotation group \( \SO(2) \). A first clear indication that this maximal commuting subgroup should not be ignored is the fact that it can be used to remedy the ambiguity in our choice of inclusion \( \SO(3)\subset\SO(6) \). 

Having fixed such an inclusion, the identification 
\begin{equation*} 
\so(6)\cong\Lambda^2(V\oplus V) 
\end{equation*} 
allows us to consider the module \( \so(3)^\perp \) which is the orthogonal complement of \( \so(3) \) inside \( \so(6) \). This latter module is reducible with irreducible summands given by
\( \bR\oplus 2V\oplus S^2_0(V) \). Here \( S^2_0(V) \) denotes the five-dimensional irreducible representation of \( \SO(3) \) that is defined as
the kernel of the natural trace map \( S^2(V)\to \bR \). Similarly, we shall, subsequently, denote by \( S^3_0(V) \) the seven-dimensional irreducible representation, defined as the kernel of the
trace map \( S^3(V)\to V \). 

In order to exploit the additional \( \GL(2,\bR) \)-symmetry, it is useful to consider representations of the enlarged group \( \SO(3)\times\SL(2,\bR) \); restricting the attention to \( \SL(2,\bR) \) is harmless in our case, since the full symmetry group \(\SO(3)\times\GL(2,\bR) \) would result in the same decompositions into irreducible modules. The irreducible components of this enlargement have the form \( A^p\otimes B^q \) where \( A^p \), \( B^p \) denote the \( p+1 \)-dimensional irreducible representations of \( \SO(3) \) and \( \SL(2,\bR) \), respectively.

The intrinsic torsion of an \( \SO(3) \)-structure is, by definition, the projection of the torsion of any connection on the \( \SO(3) \)-structure on the cokernel of the alternating map
\begin{equation}
\label{eqn:alternatingso3}
T^*\otimes\so(3)\to \Lambda^2T^*\otimes T.
\end{equation}
This alternating map extends to an isomorphism 
\begin{equation*}
\partial\colon T^*\otimes\so(6)\to \Lambda^2T^*\otimes T,
\end{equation*}
hence the space of intrinsic torsion can be identified with \( T^*\otimes\so(3)^\perp \). It follows that any \( \SO(3) \)-structure with intrinsic torsion \( \tau \) has a unique connection with torsion \( \partial(\tau) \). Subsequently, we shall refer to this as the ``canonical connection''; its associated exterior covariant derivative operator will be denoted by  \( D \).

\begin{lemma}
\label{lemma:intrinsictorsion}
The intrinsic torsion of an \( \SO(3) \)-structure on a six-manifold belongs to the \( 72 \)-dimensional \( \SO(3) \)-module
 \begin{equation}
\label{eq:intrinsic}
T^*\otimes \so(3)^{\perp} \cong 4\bR\oplus 8V\oplus 6S^2_0(V)\oplus 2S_0^3(V).
\end{equation}

As \( \SO(3)\times\SL(2,\bR) \)-modules, this space can be expressed in the form
\begin{equation}
\label{eq:intrinsic_enlarged_grp}
B^3\oplus A^2(2B^1\oplus B^3)\oplus A^4( B^1\oplus B^3)\oplus A^6B^1.
\end{equation}
\end{lemma}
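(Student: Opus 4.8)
The plan is to reduce the lemma to a routine representation-theoretic computation, using that the intrinsic torsion space has already been identified with \( T^*\otimes\so(3)^\perp \). Since \( T^*\cong T\cong V\oplus V \) as \( \SO(3) \)-modules, it suffices to decompose \( \so(3)^\perp \) and then tensor. For \eqref{eq:intrinsic} I would first note \( \so(6)\cong\Lambda^2(V\oplus V)\cong 2\Lambda^2 V\oplus(V\otimes V) \), and combine the \( \SO(3) \)-isomorphisms \( \Lambda^2 V\cong V \) and \( V\otimes V\cong\bR\oplus V\oplus S^2_0(V) \) to obtain \( \so(6)\cong\bR\oplus 3V\oplus S^2_0(V) \); subtracting \( \so(3)\cong V \) gives the twelve-dimensional module \( \so(3)^\perp\cong\bR\oplus 2V\oplus S^2_0(V) \). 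Tensoring with \( 2V \) and repeatedly applying Clebsch--Gordan — notably \( V\otimes V\cong\bR\oplus V\oplus S^2_0(V) \) and \( V\otimes S^2_0(V)\cong V\oplus S^2_0(V)\oplus S^3_0(V) \) — and collecting terms then produces the right-hand side of \eqref{eq:intrinsic}; the count \( 4+24+30+14=72 \) is a useful check.

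For the sharper statement \eqref{eq:intrinsic_enlarged_grp} I would run the same computation keeping the \( \SL(2,\bR) \)-factor visible. Writing \( T\cong A^2\otimes B^1 \) (whence \( T^*\cong A^2\otimes B^1 \), both factors being self-dual) and using \( \Lambda^2(U\otimes W)\cong(\Lambda^2 U\otimes S^2 W)\oplus(S^2 U\otimes\Lambda^2 W) \) gives \( \so(6)\cong(A^2\otimes B^2)\oplus\bigl((A^0\oplus A^4)\otimes B^0\bigr) \). Because \( \GL(2,\bR) \) centralizes \( \SO(3) \) it acts trivially on \( \so(3) \), so \( \so(3)\cong A^2\otimes B^0 \) and hence \( T^*\otimes\so(3)\cong(A^0\oplus A^2\oplus A^4)\otimes B^1 \). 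Expanding \( T^*\otimes\so(6) \) by Clebsch--Gordan in both factors yields
\[
A^0(B^1\oplus B^3)\oplus A^2(3B^1\oplus B^3)\oplus A^4(2B^1\oplus B^3)\oplus A^6 B^1,
\]
and removing the irreducible multiplicities contributed by \( T^*\otimes\so(3) \) forces the complement to be exactly \eqref{eq:intrinsic_enlarged_grp}. Restricting to \( \SO(3) \) — so that \( B^1 \) counts as \( 2\bR \) and \( B^3 \) as \( 4\bR \) — recovers \eqref{eq:intrinsic} and cross-checks the first computation.

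The step that needs genuine care, as opposed to the Clebsch--Gordan arithmetic, is the interpretation of the \( \SL(2,\bR) \)-action in \eqref{eq:intrinsic_enlarged_grp}: the fixed inclusion \( \SO(3)\subset\SO(6) \) is not preserved by \( \SL(2,\bR) \), which being non-compact cannot preserve any of the \( \SO(3) \)-invariant metrics on \( V\oplus V \), so \( \so(3)^\perp \) is not literally an \( \SL(2,\bR) \)-submodule of \( \Lambda^2(V\oplus V) \). What I would argue is that the \( \SO(3) \)-isotypic multiplicity spaces inside \( T^*\otimes\so(6) \) do carry canonical \( \SL(2,\bR) \)-modules, induced by the natural actions on \( T \) and on \( \Lambda^2 T \), and that quotienting out an abstract copy of \( T^*\otimes\so(3)\cong(A^0\oplus A^2\oplus A^4)\otimes B^1 \) is unambiguous at the level of isomorphism classes; passing to the larger symmetry group \( \SO(3)\times\GL(2,\bR) \) changes nothing. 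Granting this, \eqref{eq:intrinsic_enlarged_grp} is forced, and the remaining dimension identities (\( 12 \), \( 15 \), \( 18 \), \( 72 \), \( 90 \)) provide consistency checks throughout.
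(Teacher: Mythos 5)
Your proposal is correct and follows essentially the same route as the paper: the first decomposition is the same Clebsch--Gordan computation the paper invokes, and for \eqref{eq:intrinsic_enlarged_grp} you compute exactly the paper's quotient \( \bigl(B^1\oplus B^3\oplus A^2(3B^1\oplus B^3)\oplus A^4(2B^1\oplus B^3)\oplus A^6B^1\bigr)\big/\bigl(B^1\oplus A^2B^1\oplus A^4B^1\bigr) \). Your worry about \( \so(3)^\perp \) not being an \( \SL(2,\bR) \)-submodule is legitimate, and the paper resolves it just as you suggest, by working with the cokernel of the (injective, \( \SL(2,\bR) \)-equivariant) alternating map rather than with an orthogonal complement.
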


\begin{proof}
The decomposition \eqref{eq:intrinsic} follows from standard computations using the Clebsch-Gordan formula. Hence, only the expression \eqref{eq:intrinsic_enlarged_grp} needs to be explained. The intrinsic torsion takes values in the cokernel of the alternating map \eqref{eqn:alternatingso3} and this map is injective because of the inclusion \( \so(3)\subset\so(6) \). As \( \SO(3)\times\SL(2,\bR) \)-modules, we therefore have that the intrinsic torsion belongs to the quotient
\begin{equation*} 
\frac{ B^1\oplus B^3\oplus A^2(3B^1\oplus B^3)\oplus A^4(2B^1\oplus B^3)\oplus A^6B^1}{B^1\oplus A^2B^1\oplus A^4B^1} 
\end{equation*}
which immediately gives \eqref{eq:intrinsic_enlarged_grp}.
\end{proof}

The quadruplet \( (\sigma,\eta_i) \) which defines an \( \SO(3) \)-structure can be viewed as a refinement of an associated \( \SU(3) \)-structure. The latter can be realized by averaging over the forms \( \eta_i \): the \( 3 \)-form 
\begin{equation}
\label{eq:SU3_3form}
\gamma=\tfrac43\left(\eta_0+\eta_1+\eta_2\right)
\end{equation}
has \( \rm{GL}_+(6,\bR) \)-stabilizer \( \rm{SL}(3,\bC) \) which intersects with the stabilizer of \( \sigma \) in a copy of \( \SU(3) \). As \( \gamma \) has stabilizer \( \SL(3,\bC) \), it determines an almost complex structure \( J \) which can be used to define the dual \( 3 \)-form
\begin{equation*} 
\hat\gamma= J(\gamma);
\end{equation*} 
in other words \( \hat\gamma \) is the unique \( 3 \)-form such that \( \gamma+i\hat\gamma \) is a decomposable \( 3 \)-form compatible with the orientation, and \( J \) is the almost complex structure such that \( \gamma+i\hat\gamma \) is of type \( (3,0) \).

The space of \( \SU(3) \)-intrinsic torsion was studied in great details in \cite{Chiossi-S:TheIntrinsicTorsion} and can be related to the space \eqref{eq:intrinsic} via
\begin{equation*}
T^*\otimes \so(3)^{\perp} \cong T^*\otimes \su(3)^{\perp} \oplus T^*\otimes S^2_0(V).
\end{equation*}
This relation is schematically represented in Table~\ref{table:su3intrinsictorsion}.
 
\begin{table}[htp]
  \centering
	\begin{tabular}{CCC}
	\toprule
		\textrm{Component}		&		\SU(3)				&		\SO(3) \\
	\toprule
		W_1^+ \oplus W_1^-  		&		\bR\oplus \bR   		&  		\bR\oplus\bR \\
		W_2^+ \oplus W_2^- 		& 		\su(3) \oplus \su(3)		&		2V\oplus 2 S^2_0(V) \\
 	\midrule
		W_3 				& 		\indoublebrackets{S^{2,0}}	&		2\bR\oplus 2S^2_0(V) \\
	\midrule 
		W_4 				&		T 				&		V\oplus V \\
	\midrule
		W_5				&		T				&		V\oplus V \\
	\midrule 
		W_6				&						&		2V\oplus 2 S^2_0(V) \\
	\midrule 
		W_7				&				  		&		2S^3_0(V)\\
    	\bottomrule
  	\end{tabular}
  \caption{\( \SU(3) \) and \( \SO(3) \)-intrinsic torsion. To compare with \cite{Chiossi-S:TheIntrinsicTorsion}, observe that
 	\( W_1\oplus W_3\oplus W_4\cong B^3\oplus A^2B^1\oplus A^4B^1 \) and \(  W_1^- \oplus W_2^- \oplus W_5 \cong\bR\oplus 3V\oplus S^2_0(V) \).}
  \label{table:su3intrinsictorsion}
\end{table}

We have already encountered several \( \SO(3) \)-invariant forms. Amongst these, the \( 2 \)-form \( \sigma \) and its square \( \sigma^2 \) exhaust the invariant forms of degree two and four: 
\begin{equation*} 
\Lambda^2 T^*\cong \bR\oplus 3V\oplus S^2_0(V)\cong\bR\oplus A^2B^2\oplus A^4. 
\end{equation*}
We have also introduced five invariant \( 3 \)-forms. These are not all independent as is evident from the decomposition: 
\begin{equation*} 
\Lambda^3 T^*\cong 4\bR\oplus 2V\oplus 2S^2_0(V)\cong B^3\oplus A^2B^1 \oplus A^4B^1. 
\end{equation*}
A convenient basis consists of the four forms \( \eta_i, \hat\gamma \).
 
The exterior derivatives of the invariant forms detect part of the intrinsic torsion. In fact, they determine all of it apart from the \( 14 \)-dimensional module \( A^6B^1 \) of \eqref{eq:intrinsic_enlarged_grp}.

\begin{theorem}
\label{thm:SO3formsdetermineIT}
Given an \( \SO(3) \)-structure \( (\sigma,\eta_i) \), the associated invariant forms determine part of the intrinsic torsion as follows:
\vspace{.5mm}
\begin{compactenum}
\item \label{thm1:itm1} \( d\sigma \) determines the subspace \( 3\bR\oplus 2V\oplus 2S^2_0(V) \);
\item \label{thm1:itm2} each \( d\eta_i \) determines a subspace \(\bR\oplus 2V\oplus S^2_0(V) \), and the three resulting subspaces are in direct sum;
\item \label{thm1:itm3} \( d\hat\gamma \) determines the subspace \(\bR\oplus 3V\oplus S^2_0(V) \).
\end{compactenum}
\vspace{.5mm}
In terms of \( \SO(3)\times\SL(2,\bR) \)-modules, this means that
 \( d\sigma \) determines the subspace 
\begin{equation}
\label{eq:int_2form}
B^3\oplus A^2B^1\oplus A^4B^1, 
\end{equation}
and the \(d\eta_i\), \(d\hat\gamma\) altogether determine the modules
\begin{equation}
\label{eq:int_3forms}
B^3\oplus A^2B^1\oplus A^2B^3\oplus A^4B^3.
\end{equation}

In particular, the only intrinsic torsion component not determined by the exterior derivatives of invariant forms is the \( 14 \)-dimensional module \( A^6B^1 \).
\end{theorem}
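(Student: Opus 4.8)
The plan is to exploit that exterior differentiation of an $\SO(3)$-invariant form is an algebraic operation on the intrinsic torsion. Write $\mathcal T=T^*\otimes\so(3)^\perp$ for the space of intrinsic torsion; since the canonical connection $D$ annihilates every $\SO(3)$-invariant form $\alpha$, the form $d\alpha$ is recovered from $\alpha$ and the torsion $\Theta=\partial(\tau)$ by the universal contraction formula, so that $\tau\mapsto d\alpha$ is linear in $\tau$. As $\sigma$ spans a submodule on which $\SO(3)\times\SL(2,\bR)$ acts trivially, and as the four $3$-forms $\eta_0,\eta_1,\eta_2,\hat\gamma$ span a copy of $B^3$ inside the invariant $3$-forms, this produces $\SO(3)\times\SL(2,\bR)$-equivariant maps
\begin{equation*}
\Psi_\sigma\colon\mathcal T\to\Lambda^3T^*,\qquad\Psi\colon\mathcal T\to B^3\otimes\Lambda^4T^*,
\end{equation*}
the latter packaging $d\eta_0,d\eta_1,d\eta_2,d\hat\gamma$ at once; composing $\Psi$ with the (only $\SO(3)$-equivariant) projections onto the lines $\bR\eta_i$, $\bR\hat\gamma$ gives the maps behind \ref{thm1:itm2} and \ref{thm1:itm3}. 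By definition the subspace of $\mathcal T$ ``determined by $d\alpha$'' is the orthogonal complement of the kernel of the pertinent map, so the whole theorem reduces to computing these kernels.

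First I would record the relevant decompositions as $\SO(3)\times\SL(2,\bR)$-modules: besides \eqref{eq:intrinsic_enlarged_grp} and $\Lambda^3T^*\cong B^3\oplus A^2B^1\oplus A^4B^1$, Hodge duality gives $\Lambda^4T^*\cong\Lambda^2T^*\cong\bR\oplus A^2B^2\oplus A^4$ (with $\sigma^2$ spanning the trivial summand), and Clebsch--Gordan yields $B^3\otimes\Lambda^4T^*\cong B^3\oplus A^2(B^5\oplus B^3\oplus B^1)\oplus A^4B^3$. Schur's lemma then supplies the upper bounds: an irreducible summand of $\mathcal T$ can survive in the image of an equivariant map only if an isomorphic copy appears in the target, with at most that multiplicity. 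Comparing with $\Lambda^3T^*$ forces $\Psi_\sigma$ to kill $A^2B^3$, $A^4B^3$, $A^6B^1$ and at least one of the two copies of $A^2B^1$; comparing with $B^3\otimes\Lambda^4T^*$ forces $\Psi$ to kill $A^4B^1$, $A^6B^1$ and at least one copy of $A^2B^1$. In particular $A^6B^1$ lies in every kernel, which already establishes the last sentence of the theorem, and the subspaces in \ref{thm1:itm1}--\ref{thm1:itm3} are bounded above by those asserted.

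It remains to see the bounds are attained and to read off the individual statements. Since the surviving summands are irreducible, it is enough to produce, for each of them, one torsion tensor $\tau\in\mathcal T$ on which the relevant map is non-zero: equivariance then spreads the non-vanishing over the whole summand, and lower semicontinuity of rank makes a single generic $\Theta\in T^*\otimes\so(3)^\perp$ suffice. Concretely one computes $d\sigma$, $d\eta_i$, $d\hat\gamma$ from $\Theta$ in an adapted coframe; the nearly-K\"ahler structure on $S^3\times S^3$ and the Lie-algebra models of Table~\ref{table:poly} already light up a number of components. For \ref{thm1:itm1} and \ref{thm1:itm3} one may alternatively invoke \cite{Chiossi-S:TheIntrinsicTorsion} (cf. Table~\ref{table:su3intrinsictorsion}): $\sigma$ and $\hat\gamma$ being $\SU(3)$-invariant, $d\sigma$ and $d\hat\gamma$ depend only on the $\SU(3)$-intrinsic torsion, which $d\sigma$ detects through $W_1\oplus W_3\oplus W_4$ and $d\hat\gamma$ through $W_1^-\oplus W_2^-\oplus W_5$, exactly the subspaces claimed. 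Keeping track of the projections to $\bR\eta_i$ in the same computation yields the $\SO(3)$-submodules in \ref{thm1:itm2}, shows that the copy of $A^2B^1$ killed by $\Psi_\sigma$ is distinct from the one killed by $\Psi$ (so $d\sigma$ and the $d\eta_i,d\hat\gamma$ jointly see all of $A^2(2B^1)$, accounting for the ``$A^6B^1$ is the only missing piece'' statement), and shows that the three subspaces attached to $\eta_0,\eta_1,\eta_2$ are independent --- equivalently that $\tau\mapsto(d\eta_0,d\eta_1,d\eta_2)$ has rank $36$.

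The first two steps are routine character bookkeeping; the real content, and the main obstacle, is the last step: certifying that none of the equivariant maps degenerates on the admissible summands. Parts \ref{thm1:itm1} and \ref{thm1:itm3} can be largely offloaded onto the $\SU(3)$-torsion analysis of \cite{Chiossi-S:TheIntrinsicTorsion}, but the refinement \ref{thm1:itm2} concerning the \emph{individual} forms $\eta_i$ --- and especially the direct-sum claim --- has no $\SU(3)$ analogue and has to be checked by an explicit torsion computation.
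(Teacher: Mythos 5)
Your strategy is the paper's own: for each invariant form $\alpha$ one studies the equivariant linear map $\tau\mapsto d\alpha$ (the paper's $l_\alpha$), caps its image by Schur's lemma against the decompositions of $\Lambda^3T^*$ and $\Hom(B^3,\Lambda^4T^*)$, and then certifies non-degeneracy on the admissible summands by explicit computation. The character bookkeeping in your second paragraph matches the paper's. But the proposal has two genuine soft spots.

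First, the claim that Schur's lemma already bounds the subspace in \eqref{thm1:itm2} above by the asserted $\bR\oplus 2V\oplus S^2_0(V)$ is not correct: the map $\tau\mapsto d\eta_i$ for a \emph{single} $\eta_i$ is only $\SO(3)$-equivariant, its target is all of $\Lambda^4T^*\cong\bR\oplus 3V\oplus S^2_0(V)$, and the source contains $8V$, so Schur only gives $\bR\oplus 3V\oplus S^2_0(V)$. Showing that one copy of $V$ drops out is an \emph{upper-bound} computation that cannot be absorbed into your genericity/lower-bound step; the paper does it by computing that the image of $l_{\eta_0}$ is the orthogonal complement of $\Span{e^{1246},e^{2346},e^{2456}}\cong V$ in $\Lambda^4T^*$. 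Second, ``$A^6B^1$ lies in every kernel, which already establishes the last sentence'' is an overstatement: it shows $A^6B^1$ is undetermined, but the ``only'' requires the lower bounds, and in particular the resolution of the multiplicity issues — that $\sum_iW^{\eta_i}+W^{\hat\gamma}$ really contains $A^2(B^1\oplus B^3)$ and that the surviving copies of $A^2B^1$ in \eqref{eq:int_2form} and \eqref{eq:int_3forms} are distinct. Since $A^2B^1$ occurs with multiplicity two (and, after restricting to $\Sigma_3$, $A^2S$ with multiplicity three) in the torsion space, ``one nonzero value of a generic $\tau$'' does not settle which linear combinations survive: one must check that certain projections have maximal rank. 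This is exactly the content of the paper's proof — the $\Sigma_3\subset\GL(2,\bR)$ refinement, the equivariant maps $g_1,\dots,g_5$, and the evaluation \eqref{eqn:tauinV}--\eqref{eqn:Wetai} — and it is precisely the part your proposal defers. So the plan is the right one, but the step you label ``routine'' for \eqref{thm1:itm2} is not routine, and the step you label ``the real content'' is left entirely undone.
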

 
\begin{proof}
We consider the alternating map
\begin{equation}
\label{eqn:alternating}
T^*\otimes\gl(6,\bR)\to \Lambda^2T^*\otimes T,
\end{equation}
and assume its restriction to the direct sum \( W\oplus T^*\otimes\so(3) \) is an isomorphism; any such subspace \( W \) can be identified with the space of intrinsic torsion. We may assume \( W \) is invariant under \( \SO(3)\times\GL(2,\bR) \).

Given any \( \SO(3) \)-invariant form \( \alpha \) in \( \Lambda^kT^* \), the infinitesimal action of \( \gl(6,\bR) \) determines an equivariant linear map 
\begin{equation*}
l_\alpha\colon  T^*\otimes \gl(6,\bR)\to \Lambda^{k+1}T^*. 
\end{equation*}
Denote by \( W^\alpha \) a maximal submodule on which \( l_\alpha \) is injective. As \( l_\alpha \) is zero on both \( T^*\otimes\so(3) \) and the kernel of \eqref{eqn:alternating}, we can assume \( W^\alpha\subset W \). By construction, \( W^\alpha \) represents the component of \( W \) that is determined by \( d\alpha \). 

Straightforward computations verify that \( l_\sigma \) is surjective. Since it is also \( \SO(3)\times\SL(2,\bR) \)-equivariant, \(W^{ \sigma} \) is an \( \SO(3)\times\SL(2,\bR) \)-module isomorphic to \( \Lambda^3T^*=B^3\oplus A^2B^1 \oplus A^4B^1 \). This proves \eqref{thm1:itm1} and \eqref{eq:int_2form}.

The maps \(l_\gamma \) and \( l_{\hat\gamma} \) are also surjective, proving \eqref{thm1:itm3}, whereas the image of \( l_{\eta_i} \) is isomorphic to \( \bR\oplus 2V\oplus S^2_0(V) \). For example, we  find that the image of \( l_{\eta_0} \), inside \( \Lambda^4T^* \), is the orthogonal complement of
\begin{equation*} 
\Span{e^{1246},e^{2346},e^{2456}}\cong V. 
\end{equation*}
Similar computations hold for \( \eta_1 \) and \( \eta_2 \). This proves \eqref{thm1:itm2}.

The space of \( \SO(3) \)-invariant \( 3 \)-forms is the \( \SO(3)\times\SL(2,\bR) \)-module \( \Span{\hat\gamma,\eta_0, \eta_1,\eta_2}\cong B^3 \). It follows that the part of intrinsic torsion \(\sum_i W^{\eta_i} + W^{\hat\gamma} \), determined by these forms, is a submodule of
\begin{equation*} 
\Hom( B^3, \Lambda^4T^*)\cong B^3\oplus A^2(B^1\oplus B^3\oplus B^5)\oplus A^4B^3. 
\end{equation*}
In fact, the module \( A^2B^5 \) does not appear in the decomposition \eqref{eq:intrinsic_enlarged_grp}, and  \( \sum_i W^{\eta_i} + W^{\hat\gamma} \)  contains at least one copy of \( \bR \) and \( S^2_0V \) as an \( \SO(3) \)-module. It must therefore contain 
\( B^3\oplus A^4B^3 \). In order to prove \eqref{eq:int_3forms}, it then suffices to prove that it also contains \( A^2B^1\oplus A^2B^3 \).

In order to prove this, we consider the symmetric group \( \Sigma_3 \) which can be viewed as a subgroup of \( \GL(2,\bR) \) via
\begin{equation}
\label{eq:sigma3ingl2}
(12)\mapsto \begin{pmatrix} -\frac12 & \frac{\sqrt3}2 \\ \frac{\sqrt3}2 & \frac12\end{pmatrix},\quad (23)\mapsto  \begin{pmatrix} 1 & 0 \\ 0 & -1\end{pmatrix}.
\end{equation}
Its irreducible representations are the trivial and alternating representations \( \bR \) and \( U \) (both one-dimensional) and the standard representation \( S \) (which has dimension two). The latter is induced by the inclusion \eqref{eq:sigma3ingl2}.

As \( \SO(3)\times\Sigma_3 \)-modules, we have:
\begin{equation*}
T\cong A^2S, \quad \Lambda^2T^*\cong A^2(S+\bR)+(A^0+A^4)U.
\end{equation*}
Since this is is an orthogonal group, we have \( \Lambda^2T^*\cong\so(6) \). Moreover, \( \Sigma_3 \) acts trivially on \( \so(3) \) because \( \SO(3) \) commutes with \( \GL(2,\bR) \). It follows that we may write
\begin{equation*}
T^*\otimes \so(3)^\perp=A^6S + A^4(2S+U+\bR)+A^2(3S+U+\bR)+S+U+\bR.
\end{equation*}
Now, using \( \Lambda^2T^*\cong \Lambda^4T^*\otimes U \) and \( \Span{\hat\gamma}\cong U \), \( \Span{\eta_i}\cong S+\bR \), we find that
\begin{equation*}
\begin{gathered}
W^{\hat\gamma}= A^2(S+\bR)+(A^0+A^4)U,\\
\sum_i W^{\eta_i}\subset A^2(3S+U+\bR)+(A^0+A^4)(S+\bR).
\end{gathered}
\end{equation*}

The \( \SO(3) \)-modules isomorphic to \( S^2_0(V) \) are identified by the \( \SO(3)\times\Sigma_3 \)-equivariant maps, \( g_1,g_2,g_3\colon T^*\to T^*\otimes\so(3)^\perp \),
\begin{equation*}
\begin{gathered}
g_1(v)= (v\hook\sigma)\otimes\sigma,\quad g_2( v)= e^a\otimes [v\wedge e^a]_{\so(3)^\perp}, \\
g_3( v)= e^a\otimes (v\wedge e^a-v\hook\sigma\wedge e_a\hook\sigma),
\end{gathered}
\end{equation*}
and the two \( \SO(3) \)-equivariant maps \( g_4,g_5\colon V\to T^*\otimes\so(3)^\perp \), given by:
\begin{equation*}
\begin{gathered}
g_4(v) =e_a\hook(v+v\hook\sigma)\hook(e^{135}+e^{246})\otimes e_a\hook\hat\gamma,\\
  g_5(v)= e_a\hook v\hook e^{135}\otimes (e_a\hook\sigma)\hook\hat\gamma
  +e_a\hook\sigma\otimes (e_a\hook v\hook e^{135})\hook\hat\gamma.
\end{gathered}
\end{equation*}
These images are isomorphic to \( A^2U \) and \( A^2 \), respectively.

Setting 
\begin{equation}
 \label{eqn:tauinV}
\tau= \sum_{i=1}^5 g_i(v_i) + \sum_{j=1}^3 g_j(u_j e_2),
\end{equation}
we compute
\begin{equation*} 
l_{\eta_0}(\tau) =   {(2 v_4-3 u_2-u_1-2 u_3)} e^{1235}-   {(v_1+v_3+v_5)}{(e^{1236}+e^{1245})}.
\end{equation*}
This shows that each \( W^{\eta_i} \) contains an \( \SO(3) \)-module isomorphic to \( 2V \) whose projections to \( 3A^2S \), \( A^2U \) and \( A^2 \) have maximal rank, hence
\begin{equation}
 \label{eqn:Wetai}
W^{\eta_0}+W^{\eta_1}+W^{\eta_2}\supset A^2(2S+U+\bR)=A^2(B^1+B^3),
\end{equation}
which was what remained to be proved. 
\end{proof}

It is clear from the above that \( \SO(3) \)
is the largest subgroup that acts trivially on the space of invariant forms, but the closedness of the
invariant forms fails to ensure the vanishing of the intrinsic torsion: in the language of \cite{Bryant:Calibrated}, \( \SO(3) \)-structures are admissible but not strongly admissible.

Despite this gap between ``closedness'' and ``parallelness'', \( \SO(3) \)-struc\bdash{}tures enjoy a rather remarkable feature which ties the curvature with the intrinsic torsion. The following result is, in some sense, an analogue of Bonan's results about Ricci-flatness of metrics with exceptional holonomy \cite{Bonan:Surdesvarietes}. In the setting of \( \SO(3) \)-structures, however, the curvature information encoded in the intrinsic torsion is not limited to the Ricci curvature. The following result implies that a torsion-free \( \SO(3) \)-structure is, in fact, flat.

\begin{proposition}
\label{Prop:curv_int_tor}
There are two \( \SO(3) \)-equivariant linear maps \( R^{\so(6)} \), \( R^{\so(3)} \)
such that whenever \( P \) is an \( \SO(3) \)-structure with intrinsic torsion \( \tau \), the composition 
\begin{equation*}
P\xrightarrow{(\tau\otimes\tau,D\tau)} S^2(T^*\otimes\so(3)^\perp) \oplus \Lambda^2T^*\otimes\so(3)^\perp\xrightarrow{R^{\so(6)}} \Lambda^2T^*\otimes\so(6)
\end{equation*}
is the curvature of the Levi-Civita connection, and 
\begin{equation*}
P\xrightarrow{(\tau\otimes\tau,D\tau)} S^2(T^*\otimes\so(3)^\perp) \oplus \Lambda^2T^*\otimes\so(3)^\perp\xrightarrow{R^{\so(3)}} \Lambda^2T^*\otimes\so(3)
\end{equation*}
is the curvature of the canonical connection.
\end{proposition}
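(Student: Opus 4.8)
The plan is to exploit the first Bianchi identity together with the fact that, for any metric connection, the difference between its curvature and the Levi-Civita curvature is expressed algebraically in terms of the contorsion tensor, hence in terms of the intrinsic torsion $\tau$ and its covariant derivative. Let $\nabla$ denote the canonical connection of an $\SO(3)$-structure $P$ with intrinsic torsion $\tau$; by the discussion preceding the statement, $\nabla$ is the unique connection on $P$ with torsion $\partial(\tau)$, where $\partial(\tau)\in\Lambda^2T^*\otimes T$ is the image of $\tau\in T^*\otimes\so(3)^\perp$ under the isomorphism $\partial$. Write $R^\nabla$ for the curvature of $\nabla$ and $R^g$ for that of the Levi-Civita connection. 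The Levi-Civita connection is obtained from $\nabla$ by adding a contorsion $1$-form $\alpha\colon T\to\so(6)$ which is a fixed $\SO(3)$-equivariant linear function of $\partial(\tau)$, hence of $\tau$; the standard formula gives
\begin{equation*}
R^g_{X,Y}=R^\nabla_{X,Y}+(\nabla_X\alpha)_Y-(\nabla_Y\alpha)_X+[\alpha_X,\alpha_Y]-\alpha_{T^\nabla(X,Y)},
\end{equation*}
where $T^\nabla=\partial(\tau)$. Since $\alpha$ is linear in $\tau$, the terms $(\nabla_X\alpha)_Y-(\nabla_Y\alpha)_X$ and $\alpha_{T^\nabla(X,Y)}$ are linear in $D\tau$ and in $\tau\otimes\tau$ respectively (recall $D$ is the exterior covariant derivative of $\nabla$, so $D\tau$ records the skew part of $\nabla\tau$, which is exactly what enters the curvature), while $[\alpha_X,\alpha_Y]$ is quadratic in $\tau$.

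The first step, then, is to make this decomposition precise: express $\alpha$ explicitly as an $\SO(3)$-equivariant map $T^*\otimes\so(3)^\perp\to T^*\otimes\so(6)$, and collect the five terms above into a single $\SO(3)$-equivariant expression that is the sum of a piece linear in $D\tau\in\Lambda^2T^*\otimes\so(3)^\perp$ and a piece quadratic in $\tau$, i.e. linear in $\tau\otimes\tau\in S^2(T^*\otimes\so(3)^\perp)$. This already produces the Levi-Civita curvature; but a priori it lands in $\Lambda^2T^*\otimes\so(6)$ only as a tensor, whereas we have only used that $R^g$ is \emph{built from} $\tau$ and $D\tau$ — the content of the Proposition is that this holds with \emph{no further input}. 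The second, crucial step is therefore to show that $R^\nabla$ itself is determined by $\tau$ and $D\tau$. This is where the first Bianchi identity for $\nabla$ enters: since $\nabla$ has torsion $T^\nabla=\partial(\tau)$, the algebraic Bianchi identity
\begin{equation*}
\mathfrak{S}_{X,Y,Z}R^\nabla_{X,Y}Z=\mathfrak{S}_{X,Y,Z}\big((\nabla_XT^\nabla)(Y,Z)+T^\nabla(T^\nabla(X,Y),Z)\big)
\end{equation*}
expresses the total skew-symmetrization of $R^\nabla$ — i.e. the image of $R^\nabla$ under $\Lambda^2T^*\otimes\so(6)\hookrightarrow\Lambda^2T^*\otimes T^*\otimes T^*\to\Lambda^3T^*\otimes T^*$ — as a quantity linear in $D\tau$ and $\tau\otimes\tau$. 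The point special to $\SO(3)$-structures in six dimensions is that this skew-symmetrization map, restricted to $\Lambda^2T^*\otimes\so(3)$ (the part of $R^\nabla$ valued in the holonomy algebra, which by Ambrose–Singer is the only part that can occur for the canonical connection once one also uses $\nabla$-holonomy invariance), is \emph{injective}: the kernel would be a copy of the space of "abstract curvature tensors'' with values in $\so(3)$, and one checks via Clebsch–Gordan that $\ker\big(\Lambda^2T^*\otimes\so(3)\to\Lambda^3T^*\otimes T^*\big)=0$ because $\so(3)\cong V$ is too small to support a nonzero Bianchi-symmetric tensor in this dimension. Hence $R^\nabla$, which takes values in $\Lambda^2T^*\otimes\so(3)$, is recovered from its skew-symmetrization, giving the map $R^{\so(3)}$; and then $R^g=R^\nabla+(\text{contorsion terms})$ gives $R^{\so(6)}$, with both maps manifestly $\SO(3)$-equivariant and polynomial of the stated bidegree in $(\tau\otimes\tau,D\tau)$.

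I would organize the write-up as: (i) recall the contorsion formula relating $R^g$ and $R^\nabla$ and identify $\alpha$ as an equivariant linear function of $\tau$; (ii) establish the injectivity of the Bianchi skew-symmetrization on $\Lambda^2T^*\otimes\so(3)$ by a representation-theoretic count, the key lemma; (iii) apply the algebraic Bianchi identity for $\nabla$ to conclude $R^\nabla=R^{\so(3)}(\tau\otimes\tau,D\tau)$; (iv) substitute back to get $R^g=R^{\so(6)}(\tau\otimes\tau,D\tau)$; (v) note equivariance is automatic since every construction is natural. The main obstacle is step (ii): one must verify that no nonzero $\so(3)$-valued tensor satisfies the first Bianchi identity, equivalently that the $\SO(3)$-equivariant "curvature space'' $\ker\big(b\colon\Lambda^2T^*\otimes\so(3)\to\Lambda^3T^*\otimes T^*\big)$ vanishes. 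This is a finite, if slightly delicate, Clebsch–Gordan computation with $T\cong 2V$ and $\so(3)\cong V$; alternatively it follows because the corresponding statement is exactly the assertion (remarked just before the Proposition) that a torsion-free $\SO(3)$-structure is flat, applied to the canonical connection — any $\nabla$ with $\tau=0$ has $R^\nabla$ Bianchi-symmetric and $\so(3)$-valued, so $R^\nabla=0$. One should be careful to treat the full $\Lambda^2T^*\otimes\so(3)$, not merely the holonomy subspace, so that the map $R^{\so(3)}$ is globally well defined on all of $P$ regardless of the holonomy of a particular structure; this is handled precisely by the injectivity in (ii).
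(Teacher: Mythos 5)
Your proposal is correct and rests on the same two pillars as the paper's proof: a Bianchi identity to constrain the curvature, and an injectivity statement that lets one reconstruct the full curvature from the part explicitly computable from \( (\tau\otimes\tau,D\tau) \). The difference is one of organization. The paper starts from the Levi-Civita connection: writing \( \omega^{\so(6)}=\omega-\tau \) on the reduced frame bundle gives
\begin{equation*}
\Omega^{\so(6)}=D\omega-D\tau+\tfrac12[\tau,\tau]_{\so(3)^\perp}+\tfrac12[\tau,\tau]_{\so(3)},
\end{equation*}
so the \( \so(3)^\perp \)-component of \( \Omega^{\so(6)} \) is \( -D\tau+\tfrac12[\tau,\tau]_{\so(3)^\perp} \); the torsion-free Bianchi identity places \( \Omega^{\so(6)} \) in \( \mathcal R=\ker(S^2(\Lambda^2)\to\Lambda^4) \), and the key lemma is that \( p^\perp\colon\mathcal R\to\Lambda^2\otimes\so(3)^\perp \) is injective, equivalently \( \mathcal R\cap S^2(\so(3))=0 \). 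You instead start from the canonical connection and use the Bianchi identity with torsion; your key lemma --- that no nonzero element of \( \Lambda^2T^*\otimes\so(3) \) is annihilated by the skew-symmetrization into \( \Lambda^3T^*\otimes T^* \) --- is equivalent to the paper's, since any element of that kernel is an algebraic curvature tensor, hence has the pair symmetry and lies in \( \mathcal R\cap S^2(\so(3)) \). Both reductions leave the same finite check on the six-dimensional space \( S^2(\so(3))\cong\bR\oplus S^2_0(V) \), which the paper dispatches as ``easy to check'' and which you correctly single out as the one computation that must actually be performed (your heuristic that \( \so(3) \) is ``too small'' is not by itself a proof, since \( \Lambda^4T^* \) does contain copies of \( \bR \) and \( S^2_0(V) \)). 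Two small caveats: your ``alternative'' justification of the key lemma via the flatness of torsion-free \( \SO(3) \)-structures is circular in this context, because the paper deduces that flatness \emph{from} the present proposition via the injectivity of \( p^\perp \); and the contorsion term \( \alpha_{T^\nabla(X,Y)} \) should enter with a plus sign in your conventions, though this does not affect the structure of the argument.
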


\begin{proof}
Let \( \omega^{\so(6)} \) denote the restriction of the Levi-Civita connection one-form to the (reduced frame bundle) \( P \). The canonical connection form \( \omega \) is the orthogonal projection of \( \omega^{\so(6)} \) onto \( \so(3) \), and the intrinsic torsion \( \tau \) is the difference \( \omega-\omega^{\so(6)} \). We can express the curvature of \( \omega^{\so(6)} \) as
\begin{equation*} 
\Omega^{\so(6)}=D\omega-D\tau+\tfrac12[\tau,\tau]_{\so(3)^\perp}+\tfrac12[\tau,\tau]_{\so(3)}.
\end{equation*}

The Bianchi identity \( \Omega^{\so(6)}\wedge\theta=0 \) determines a subspace \( \mathcal R \) in 
\(  \Lambda^2\otimes \so(6) \) given as the kernel of the natural map
\begin{equation*}
S^2(\Lambda^2)\to\Lambda^4 .
\end{equation*}
It is easy to check that \( \mathcal{R} \) intersects trivially with \( S^2(\so(3)) \). It follows that the projection 
\begin{equation}
\label{eqn:pperp}
p^\perp\colon  \mathcal R\to \Lambda^2\otimes \so(3)^\perp 
\end{equation}
is  injective. If \( s \) is a left inverse of \( p^\perp \), we have 
\begin{equation*} 
\Omega^{\so(6)} = s(-D\tau+\tfrac12[\tau,\tau]_{\so(3)^\perp}), 
\end{equation*}
which gives the map \( R^{\so(6)} \). Similarly, \( R^{\so(3)} \) is determined by
\begin{equation*}
D\omega =(\Omega^{\so(6)})_{\so(3)}-\tfrac12[\tau,\tau]_{\so(3)}. 
\qedhere
\end{equation*}
\end{proof}

\begin{remark}
 \label{remark:pperpinjective}
Proposition \ref{Prop:curv_int_tor} implies that when \( \tau=0 \) the curvature is also zero; this is a consequence of the fact that the projection \( \mathcal R\to \Lambda^2\otimes \so(3)^\perp \ \) is injective, and holds more generally for \( \G \)-structures where \( \G \) acts reducibly on \( T \) and faithfully on each irreducible component (see also the work of Cleyton and Swann \cite{Cleyon-S:Einstein}).
\end{remark}

As \( \SO(3) \) is not strongly admissible, we cannot express the curvature solely in terms of the exterior derivative of the defining forms. A way to circumvent this fact is achieved by using the inclusion of \( \SO(3) \) in \( \SU(3) \). Since the latter structure group is strongly admissible, the Ricci curvature can be expressed in terms of the exterior derivatives of the invariant forms \( \sigma \) and \( \gamma+i\hat{\gamma} \). As explained in \cite{Bedulli-B:TheRicciTensor} this, in particular, applies to \emph{half-flat \( \mathit{SU(3)} \)-structures}, meaning those which have 
\begin{equation}
\label{eq:SU3_halfflat}
d\sigma^2=0, \quad d\gamma=0.
\end{equation}
In terms of the decomposition of \cite{Chiossi-S:TheIntrinsicTorsion}, these are the structures whose intrinsic torsion belongs to the \( 21 \)-dimensional submodule \( W_1^-\oplus W_2^-\oplus W_3 \).

\section{Invariant torsion}
\label{sec:inv_intr}

A natural generalization of torsion-free \( \SO(3) \)-structures are those which have \emph{invariant intrinsic torsion} \( \tau \), meaning 
\begin{equation*} 
\tau\in(T^*\otimes\so(3)^\perp)^{\SO(3)}\cong 4\bR. 
\end{equation*}
 
Whilst the invariance refers to the action of \( \SO(3) \), a significant role is played by the commuting subgroup \( \SL(2,\bR)\subset\GL(6,\bR) \). With respect to this group, the module \( (T^*\otimes\so(3)^\perp)^{\SO(3)} \) appears as \( B^3 \). Explicitly, we can choose coordinates \( u_1,u_2 \) on \( \bR^2 \) such that the map 
\begin{equation*} 
(\bR^2)^*\to T^*, \quad u_1\mapsto e^1, u_2\mapsto e^2
\end{equation*}
is \( \SL(2,\bR) \)-equivariant, and then we may think of \( B^k \) as the space \(  \bR_k[u_1,u_2] \) of homogeneous polynomials of degree \( k \) in \( u_1\) and \(u_2 \). This space is naturally mapped to a subspace of the tensorial algebra over \( B^1 \) via 
\begin{equation*}
u_{i_1}\dotsm u_{i_k}\mapsto \sum_{\alpha\in\Sigma_k} \tfrac1{k!}u_{i_{\alpha_1}}\otimes \dotsb \otimes u_{i_{\alpha_k}}.
\end{equation*} 

The intrinsic torsion, strictly speaking, takes values in a quotient of \( \Lambda^2T^*\otimes T \). The subspace of this quotient being fixed by \( \SO(3) \) is identified via the following:

\begin{lemma}
\label{lemma:B3invtor}
There is an \( \SO(3)\times\SL(2,\bR) \)-equivariant map,
\begin{equation*} 
B^3\oplus B^1\to \Lambda^2T^*\otimes T\cong  \Hom(T^*,\Lambda^2T^*),  
\end{equation*}
mapping
\begin{equation*}
\begin{gathered}
(\lambda,\mu)=\bigl(\lambda_1 u_1^3+\lambda_2 u_1^2 u_2+\lambda_3 u_2^2 u_1+\lambda_4 u_2^3,\mu_1u_1+\mu_2u_2\bigr)
\end{gathered}
\end{equation*}
to  \(\kappa_{\lambda,\mu}\) satisfying
\begin{equation*}
\begin{gathered}
  \kappa_{\lambda,\mu}(e^1)= (\mu_1-\tfrac13\lambda_2) e^{35}+(\tfrac12\mu_2-\tfrac13\lambda_3)(e^{36}+e^{45})-\lambda_4 e^{46}, \\
  \kappa_{\lambda,\mu}(e^2) = (\tfrac13\lambda_2+\tfrac12\mu_1)(e^{36}+e^{45}) +\lambda_1 e^{35} +(\mu_2+\tfrac13\lambda_3)e^{46}.
\end{gathered}
\end{equation*}
\end{lemma}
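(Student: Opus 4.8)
The plan is to use that, by Lemma~\ref{lem:def_forms} and the normal forms $\sigma=e^{12}+e^{34}+e^{56}$, $\eta_0=e^{135}$, the group $\SO(3)$ acts diagonally on $T^*=V\oplus V$, the two summands being $V_1=\Span{e^1,e^3,e^5}$ and $V_2=\Span{e^2,e^4,e^6}$, while the commuting $\SL(2,\bR)$ acts on the ``polynomial'' factor so that the map $u_1\mapsto e^1$, $u_2\mapsto e^2$ is equivariant. The first step is to pin down the invariant subspace $\Hom(T^*,\Lambda^2T^*)^{\SO(3)}=\Hom_{\SO(3)}(T^*,\Lambda^2T^*)$. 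Since the $\SO(3)$-orbit of $e^1$ spans $V_1$, and likewise for $e^2$ and $V_2$, any $\SO(3)$-equivariant map $\kappa\colon T^*\to\Lambda^2T^*$ is determined by the pair $\bigl(\kappa(e^1),\kappa(e^2)\bigr)$. Conversely, from $\Lambda^2T^*\cong\bR\oplus 3V\oplus S^2_0(V)$ we get $\Hom_{\SO(3)}(V,\Lambda^2T^*)\cong\bR^3$, and evaluation at the axis fixed by $\mathrm{Stab}_{\SO(3)}(e^1)=\mathrm{Stab}_{\SO(3)}(e^2)\cong\SO(2)$ identifies this $\bR^3$ with the subspace $\Span{e^{35},e^{46},e^{36}+e^{45}}$ of $(\Lambda^2T^*)^{\SO(2)}$ (the part accounted for by the three copies of $V$). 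Hence $\Hom(T^*,\Lambda^2T^*)^{\SO(3)}$ is six-dimensional, and a prescription of $\kappa(e^1)$, $\kappa(e^2)$ extends to an $\SO(3)$-equivariant map --- then uniquely --- precisely when both values lie in $\Span{e^{35},e^{46},e^{36}+e^{45}}$.

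Granting this, for every $(\lambda,\mu)\in B^3\oplus B^1$ the two right-hand sides displayed in the statement are, by inspection, elements of $\Span{e^{35},e^{46},e^{36}+e^{45}}$, so they extend to a unique $\SO(3)$-equivariant $\kappa_{\lambda,\mu}$, and $(\lambda,\mu)\mapsto\kappa_{\lambda,\mu}$ is the asserted map $B^3\oplus B^1\to\Lambda^2T^*\otimes T$. It is moreover injective, hence an isomorphism onto the six-dimensional invariant subspace, as one checks at once by solving $\kappa_{\lambda,\mu}(e^1)=\kappa_{\lambda,\mu}(e^2)=0$.

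It then remains to verify $\SL(2,\bR)$-equivariance, which I would do on the standard generators $X$, $Y$, $H=[X,Y]$ of $\lsl(2,\bR)$, with $X$ acting on $T^*$ by $e^{2i}\mapsto e^{2i-1}$, $e^{2i-1}\mapsto0$ and on $B^k=\bR_k[u_1,u_2]$ by $u_1\partial_{u_2}$ (and similarly $Y$). As $\SL(2,\bR)$ commutes with $\SO(3)$, both $\kappa_{\xi\cdot(\lambda,\mu)}$ and $\xi\cdot\kappa_{\lambda,\mu}$ are $\SO(3)$-equivariant for $\xi\in\{X,Y,H\}$, so it suffices to compare them on $e^1$ and $e^2$: for instance for $\xi=X$ one checks $\kappa_{X\cdot(\lambda,\mu)}(e^1)=X\cdot\kappa_{\lambda,\mu}(e^1)$ and $\kappa_{X\cdot(\lambda,\mu)}(e^2)=X\cdot\kappa_{\lambda,\mu}(e^2)-\kappa_{\lambda,\mu}(e^1)$, using $X(e^{35})=0$, $X(e^{46})=e^{36}+e^{45}$, $X(e^{36}+e^{45})=2e^{35}$ together with $X\cdot\lambda=(\lambda_2,2\lambda_3,3\lambda_4,0)$, $X\cdot\mu=(\mu_2,0)$. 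No conceptual difficulty is expected; the one delicate point, and the place where sign or scaling errors would enter, is fixing mutually compatible normalisations of the $\SL(2,\bR)$-actions on $T^*$, on $\Lambda^2T^*$ and on $B^3\oplus B^1$ --- once these are fixed the whole equivariance check is a short and mechanical computation.
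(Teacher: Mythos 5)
Your proof is correct, but it takes a genuinely different route from the paper's. The paper \emph{constructs} \( \kappa_{\lambda,\mu} \) as a composition of manifestly equivariant pieces: the symmetrization inclusion \( B^3=S^3(B^1)\subset B^1\otimes S^2(B^1) \), an explicit equivariant map \( B^1\to B^1\otimes S^2(B^1) \), and a map \( B^1\otimes B^1\otimes B^1\to T\otimes\Lambda^2T^* \) built from \( \sigma \) and the identification of \( T \) with \( A^2B^1 \); equivariance is then automatic and the displayed formulas for \( \kappa_{\lambda,\mu}(e^1) \), \( \kappa_{\lambda,\mu}(e^2) \) are obtained by evaluating the composite. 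You instead \emph{verify} that the given formulas define such a map: you first identify \( \Hom_{\SO(3)}(T^*,\Lambda^2T^*) \) as six-dimensional via Schur's lemma and show that evaluation at \( e^1,e^2 \) is a bijection onto pairs in \( \Span{e^{35},e^{46},e^{36}+e^{45}} \) (this step is sound: the \( V \)-isotypic part of \( (\Lambda^2T^*)^{\SO(2)} \) is exactly that three-dimensional span), and then check \( \SL(2,\bR) \)-equivariance infinitesimally on the generators \( X,Y \) of \( \lsl(2,\bR) \), which suffices by connectedness; I confirmed that your sample computation for \( X \) closes up with the stated coefficients, and the analogous check for \( Y \) does too. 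Your approach buys, as a byproduct, the fact that \( (\lambda,\mu)\mapsto\kappa_{\lambda,\mu} \) is an isomorphism onto the full invariant subspace \( (\Lambda^2T^*\otimes T)^{\SO(3)} \), which the paper uses implicitly later (e.g.\ when inverting this map in Lemma~\ref{lemma:tauS}); the paper's construction, on the other hand, explains where the particular coefficients \( \tfrac13 \), \( \tfrac12 \) come from rather than presenting them as given. The only caveat is that your argument, as written, completes only one of the two generator checks and correctly flags the remaining one as mechanical; carrying it out (with the normalizations \( Xe^{2i}=e^{2i-1} \), \( Ye^{2i-1}=e^{2i} \) matching \( u_1\mapsto e^1 \), \( u_2\mapsto e^2 \)) finishes the proof.
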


\begin{proof}
There is a natural inclusion \( B^3=S^3(B^1)\subset B^1\otimes S^2(B^1) \) given by 
\begin{equation*}
u_1^3\mapsto u_1\otimes u_1^2, \quad u_1^2u_2\mapsto\tfrac23 u_1\otimes u_1u_2 + \tfrac13u_2\otimes u_1^2,
\end{equation*}
and an equivariant map \( B^1\to B^1\otimes S^2(B^1) \),
\begin{equation*}
u_1\mapsto u_1\otimes u_1 u_2-u_2\otimes u_1^2, \quad u_2\mapsto -u_2\otimes u_1 u_2+u_1\otimes u_2^2. 
\end{equation*}
The statement is then obtained by considering the map
\begin{equation*}
\begin{gathered}
B^1\otimes B^1\otimes B^1\to T\otimes \Lambda^2T^*,\\ 
u_i\otimes u_j\otimes u_k \mapsto (u_ie^1\hook\sigma) \otimes u_je^3\wedge u_ke^5
+ (u_ie^3\hook\sigma) \otimes u_je^5\wedge u_ke^1\\
+ (u_ie^5\hook\sigma) \otimes u_je^1\wedge u_ke^3.
\end{gathered}
\end{equation*}
\end{proof}

It follows that we can identify the intrinsic torsion \( \tau \) with a quadruplet of functions \( \lambda_1,\lambda_2,\lambda_3, \lambda_4\in C^\infty(M) \).
These then govern a differential complex, which is obtained by restriction of the exterior derivative to the span of the set of invariant forms; it is similar to the situation considered in \cite{Chiossi-M:SO3Structures}. This complex is completely determined by
\begin{equation}
\label{eq:inv_tor_ext_der}
\begin{gathered}
d\eta_0 = -  \tfrac12\lambda_4 \sigma^2,\\
d\eta_1 =  \tfrac1{16}(3 \sqrt{3} \lambda_1+3 \lambda_2+  \sqrt{3} \lambda_3+\lambda_4)\sigma^2,\\
d\eta_2 =  -\tfrac1{16}(3  \sqrt{3} \lambda_1-3 \lambda_2+  \sqrt{3} \lambda_3-\lambda_4)\sigma^2,\\
d\hat\gamma = \tfrac12(\lambda_3-\lambda_1)\sigma^2,\\
d\sigma =\tfrac34(\lambda_1-\lambda_3)\gamma +\tfrac34(\lambda_2-\lambda_4) \hat\gamma  + \beta,
\end{gathered}
\end{equation}
where
\begin{multline*}
\beta=\tfrac{1}{4}  (\lambda_2+3\lambda_4) (e^{235}+e^{145}+e^{136}+3e^{246})\\
 +\tfrac{1}{4}  (3\lambda_1+\lambda_3)(e^{245}+e^{146}+e^{236}+3e^{135}).
\end{multline*}

\begin{remark}
Expressing \( d\sigma \) this way allows us to easily determine the components of the \( \SU(3) \)-intrinsic torsion:
\begin{equation}
\label{eq:SU3_intr_tor}
W_1^+ = \tfrac12(\lambda_2-\lambda_4), \quad W_1^-=\tfrac12(\lambda_3-\lambda_1), \quad W_3= \beta.
\end{equation}
\end{remark}

For \( \SO(3) \)-structures, it turns out that invariant intrinsic torsion is the same as constant intrinsic torsion, as in \cite{Conti-M:HarmonicStructuresAnd}:

\begin{lemma}
\label{lemma:ticonstant}
On a connected manifold, any \( \SO(3) \)-structure with invariant intrinsic torsion has constant torsion, i.e., the functions \( \lambda_i \) are constant.
\end{lemma}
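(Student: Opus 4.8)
The plan is to show that the differential constraints forced by $d^2 = 0$, together with Proposition~\ref{Prop:curv_int_tor}, pin down the $\lambda_i$ to be locally constant on a connected manifold. First I would differentiate each of the equations in \eqref{eq:inv_tor_ext_der}. Applying $d$ to, say, $d\eta_0 = -\tfrac12\lambda_4\sigma^2$ gives $0 = -\tfrac12\,d\lambda_4\wedge\sigma^2 - \tfrac12\lambda_4\,d(\sigma^2)$, and $d(\sigma^2) = 2\,d\sigma\wedge\sigma$ is itself controlled by the last line of \eqref{eq:inv_tor_ext_der}. Doing this for all five structure equations produces a system of first-order PDEs in the $\lambda_i$ whose inhomogeneous (quadratic in $\lambda$) parts one reads off explicitly; the homogeneous part is a wedge of $d\lambda_i$ with a fixed invariant $4$-form (a multiple of $\sigma^2$) or $3$-form. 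Wedging with $\sigma^2$ is an isomorphism $T^* \to \Lambda^5 T^*$, so from the four relations coming from the $d\eta_i$ and $d\hat\gamma$ lines one can hope to solve algebraically for the $1$-forms $d\lambda_i$ in terms of the $\lambda_i$ themselves.

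The cleanest route, however, is probably to invoke Proposition~\ref{Prop:curv_int_tor} directly rather than hand-computing $d^2=0$. Since the intrinsic torsion $\tau$ is $\SO(3)$-invariant, it is constant along the fibres of the canonical connection, i.e.\ $D\tau$ is "horizontal" and lies in $(\Lambda^2 T^*\otimes\so(3)^\perp)^{\SO(3)}$. But $D\tau$ is precisely the covariant derivative of the $\lambda_i$ with respect to the canonical connection $D$, and the connection one-form of $D$ takes values in $\so(3)$, on which $\SO(3)$ acts; hence $D\lambda_i = d\lambda_i$ as the $\lambda_i$ are $\SO(3)$-invariant functions on $M$ (they descend from $P$). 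So $d\tau$, viewed as a $T^*$-valued object built from the gradients $d\lambda_i$, must itself be $\SO(3)$-invariant. Now the point is that $(T^*\otimes B^3)^{\SO(3)} = 0$: the module $T^* = V\oplus V$ contains no trivial $\SO(3)$-summand, so there is no nonzero $\SO(3)$-invariant element of $T^*\otimes(T^*\otimes\so(3)^\perp)^{\SO(3)}$. Therefore each $d\lambda_i = 0$.

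More carefully, what needs checking is that the gradient $d\lambda_i$ is genuinely an $\SO(3)$-invariant section of $T^*$ under the identification afforded by the $\SO(3)$-structure, i.e.\ that the frame-bundle function $P\to\bR$ representing $\lambda_i$ has $\SO(3)$-invariant derivative along the horizontal distribution of $D$. This follows because $\tau\colon P\to(\bR^6)^*\otimes\so(3)^\perp$ is, by hypothesis, a constant ($\SO(3)$-fixed) map, so its horizontal derivative with respect to $D$ vanishes identically; equivalently $D\tau = 0$, which forces the quadruplet $(\lambda_i)$ to be $D$-parallel, and parallel $\SO(3)$-invariant functions on a connected manifold are constant. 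One then translates $D\tau=0$ back through the dictionary \eqref{eq:inv_tor_ext_der}: concretely, the exterior derivatives of the invariant forms are expressible through $\tau$ and $D\tau$, and setting $D\tau=0$ while keeping $d^2=0$ consistent yields $d\lambda_i=0$.

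The main obstacle I anticipate is bookkeeping: one must be sure that "invariant intrinsic torsion" ($\tau$ lands in the fixed subspace at each point) really does imply $D\tau=0$ and not merely that $D\tau$ lies in some invariant submodule — the two coincide only because $(T^*\otimes\so(3)^\perp)^{\SO(3)}\cong B^3$ sits inside a $T^*$-tensor with no further trivial summand, so the subtlety is the representation-theoretic vanishing $(T^*\otimes B^3)^{\SO(3)}=(V\oplus V)^{\SO(3)}\otimes B^3 = 0$. Once that is in place, connectedness finishes the argument immediately. An alternative, more computational fallback, should the conceptual argument hide a subtlety, is to grind through $d(d\eta_i)=0$ and $d(d\hat\gamma)=0$ using \eqref{eq:inv_tor_ext_der}, solve the resulting linear system for the $d\lambda_i$ (the coefficient "matrix" being built from wedging with $\sigma^2$, which is invertible as a map $T^*\to\Lambda^5T^*$), and observe that the inhomogeneous terms, being wedges of invariant $3$-forms with the non-invariant part $\beta$ of $d\sigma$, are forced to vanish — again yielding $d\lambda_i=0$.
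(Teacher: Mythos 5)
Your primary (``conceptual'') argument is circular. The hypothesis of invariant intrinsic torsion says only that at each point \( \tau \) lies in the trivial isotypic component \( (T^*\otimes\so(3)^\perp)^{\SO(3)}\cong 4\bR \); it does \emph{not} say that \( \tau\colon P\to(\bR^6)^*\otimes\so(3)^\perp \) is a constant map, which is precisely what the lemma is asserting. From pointwise invariance of \( \tau \) you may conclude that \( \nabla\tau \) is a section of \( T^*\otimes 4\bR \), but there is no reason for \( \nabla\tau \) to be pointwise \( \SO(3) \)-invariant, so you cannot invoke \( (T^*\otimes 4\bR)^{\SO(3)}=0 \) to kill it. If that inference were valid, the same argument would show that every section of a bundle associated to a trivial module -- in particular every smooth function on \( M \) -- is parallel, which is absurd. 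The step ``\( \tau \) is a constant (\,\( \SO(3) \)-fixed) map, so its horizontal derivative vanishes'' assumes the conclusion.

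The representation-theoretic fact you identify, \( (V\oplus V)^{\SO(3)}=0 \), is indeed the key ingredient, but it must be applied to \( \Lambda^5T^*\cong V\oplus V \) rather than to \( \nabla\tau \): since \( d\sigma \) is, by \eqref{eq:inv_tor_ext_der}, a combination of invariant \( 3 \)-forms with coefficients \( \lambda_i \), the \( 5 \)-form \( d(\sigma^2)=2\,d\sigma\wedge\sigma \) is pointwise \( \SO(3) \)-invariant (it is a universal algebraic expression in invariant forms), hence vanishes identically. This is the paper's one-line observation ``there are no invariant \( 5 \)-forms, so \( \sigma^2 \) is closed'', and it is exactly what your computational fallback is missing: your stated reason for the vanishing of the inhomogeneous terms (``wedges of invariant \( 3 \)-forms with the non-invariant part \( \beta \)'') is garbled -- \( \beta \) is itself invariant, and the relevant products are \( (\text{invariant }3\text{-form})\wedge\sigma \). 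Once \( d(\sigma^2)=0 \) is in hand, applying \( d \) to each line of \eqref{eq:inv_tor_ext_der} gives \( d\lambda_i\wedge\sigma^2=0 \) with no inhomogeneous terms at all, non-degeneracy of \( \sigma \) gives \( d\lambda_i=0 \), and connectedness finishes the proof; this is precisely the paper's argument.
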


\begin{proof}
Since there are no invariant \( 5 \)-forms, \( \sigma^2 \) is necessarily closed. Applying \( d \) to the equations~\eqref{eq:inv_tor_ext_der}, we find that
\( d\lambda_i\wedge \sigma^2 \) vanishes, for \( i=1,2,3,4 \). As \( \sigma \) is non-degenerate, we conclude \( d\lambda_i=0 \), and the statement follows.
\end{proof}

If we view the intrinsic torsion as taking values in \( T^*\otimes\so(3)^\perp \), there is a cost, since this is not an \( \SL(2,\bR) \)-module. 
Nevertheless, this way of viewing things will be useful subsequently.
\begin{lemma}
\label{lemma:tauS}
The intrinsic torsion \( \tau_\lambda \) can be written as
\begin{equation*} 
\kappa=\partial(\tau_\lambda) \mod \partial(T^*\otimes \so(3)), 
\end{equation*}
where \( \tau_\lambda\in T^*\otimes \so(3)^\perp \) is given by:
\begin{equation*}
\begin{gathered}
\tfrac14 (\lambda_1+ \lambda_3)(e^2\otimes e^{46}-e^2\otimes e^{35}-e^4\otimes e^{26}+e^4\otimes e^{15}+e^6\otimes e^{24}-e^6\otimes e^{13})\\
-\tfrac12 \lambda_4 (e^2\otimes e^{45}+e^2\otimes e^{36}-e^4\otimes e^{25}-e^4\otimes e^{16}+e^6\otimes e^{14}+e^6\otimes e^{23})\\
+\tfrac14 (\lambda_2+ \lambda_4)(e^1\otimes e^{46}-e^1\otimes e^{35}-e^3\otimes e^{26}+e^3\otimes e^{15}+e^5\otimes e^{24}-e^5\otimes e^{13})\\
+\tfrac12  \lambda_1(e^1\otimes e^{45}+e^1\otimes e^{36}-e^3\otimes e^{25}-e^3\otimes e^{16}+e^5\otimes e^{14}+e^5\otimes e^{23}).
\end{gathered}
\end{equation*}
\end{lemma}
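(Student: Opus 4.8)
The statement is a linear-algebra identity in the fixed model $T=V\oplus V$; recall that $\kappa=\kappa_{\lambda,\mu}$ is the representative of the intrinsic torsion produced in Lemma~\ref{lemma:B3invtor}. The plan is to pin down the unique invariant tensor the lemma describes and then verify the explicit formula by a direct computation of $\partial$. First I would fix the embedding $\so(3)\subset\so(6)\cong\Lambda^2T^*$ compatible with \eqref{eq:SO3tang}: writing the two copies of $V$ as $\langle e^1,e^3,e^5\rangle$ and $\langle e^2,e^4,e^6\rangle$, the diagonal action gives
\begin{equation*}
\so(3)=\langle e^{13}+e^{24},\ e^{15}+e^{26},\ e^{35}+e^{46}\rangle,
\end{equation*}
with $\so(3)^\perp$ its orthogonal complement inside $\so(6)$. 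Since $\partial$ is an isomorphism and $\so(6)=\so(3)\oplus\so(3)^\perp$, the composite $T^*\otimes\so(3)^\perp\hookrightarrow T^*\otimes\so(6)\xrightarrow{\partial}\Lambda^2T^*\otimes T\to(\Lambda^2T^*\otimes T)/\partial(T^*\otimes\so(3))$ is an isomorphism; hence there is a unique $\tau_\lambda\in T^*\otimes\so(3)^\perp$ with $\partial(\tau_\lambda)\equiv\kappa\pmod{\partial(T^*\otimes\so(3))}$ --- equivalently, $\tau_\lambda$ is the $\so(3)^\perp$-component of $\partial^{-1}(\kappa)$ --- and it is this element that the lemma makes explicit.

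To verify the displayed expression I would work backwards. First check that the coefficient of each $\lambda_i$ in the formula is an $\SO(3)$-invariant element of $T^*\otimes\so(3)^\perp$: orthogonality to $T^*\otimes\so(3)$ is immediate from the description of $\so(3)$ just given, invariance is simply the fact that the torsion is invariant, and linear independence of the four tensors shows $\tau_\lambda$ ranges over all of $(T^*\otimes\so(3)^\perp)^{\SO(3)}\cong B^3$. Then compute $\partial(\tau_\lambda)$ from $\partial(\alpha\otimes A)(X,Y)=\alpha(X)AY-\alpha(Y)AX$, for $\alpha\in T^*$ and $A\in\so(6)$. By $\SO(3)$-equivariance it suffices to evaluate the resulting element of $\Lambda^2T^*\otimes T\cong\Hom(T^*,\Lambda^2T^*)$ on $e^1$ and $e^2$, and these two $2$-forms are to be compared with $\kappa_{\lambda,\mu}(e^1)$, $\kappa_{\lambda,\mu}(e^2)$ from Lemma~\ref{lemma:B3invtor}.

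The one genuinely delicate point is the clause ``$\mod\partial(T^*\otimes\so(3))$''. Changing the auxiliary parameter $\mu$ alters $\kappa_{\lambda,\mu}$ only by $\kappa_{0,\mu'}$, and a short check --- or a dimension count, since the invariant part of $(\Lambda^2T^*\otimes T)/\partial(T^*\otimes\so(3))$ is $4$-dimensional while the image of $B^3\oplus B^1$ is $6$-dimensional --- shows that the $B^1$-summand of Lemma~\ref{lemma:B3invtor} maps into $\partial(T^*\otimes\so(3))$. Hence the class of $\kappa$ in the quotient does not depend on $\mu$, and it is enough to exhibit one $\mu$ with $\partial(\tau_\lambda)-\kappa_{\lambda,\mu}\in\partial(T^*\otimes\so(3))$; choosing $\mu$ to absorb the $\so(3)$-part one can in fact arrange $\partial(\tau_\lambda)=\kappa_{\lambda,\mu}$ on the nose, which is exactly the role of the extra $B^1$ in Lemma~\ref{lemma:B3invtor}. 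Since both sides are $\SO(3)$-invariant this reduces to matching four scalar coefficients, and the normalizations in the statement are precisely those for which the match occurs.

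The main obstacle is therefore not conceptual but bookkeeping: fixing mutually consistent sign and normalization conventions for $\so(6)\cong\Lambda^2T^*$, for $\partial$, and for the identifications $B^k\cong\bR_k[u_1,u_2]$ and $u_1\mapsto e^1$, $u_2\mapsto e^2$, and then carrying out the long but routine component computation without error. A useful consistency check at the end is that the resulting $\tau_\lambda$ must, via $d=D+(\text{action of }\tau_\lambda)$ on the invariant forms, reproduce the exterior-derivative relations \eqref{eq:inv_tor_ext_der}.
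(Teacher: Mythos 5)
Your proposal is correct and follows essentially the same route as the paper: both rest on the observation that the alternating map induces an isomorphism from \( (T^*\otimes\so(3)^\perp)^{\SO(3)}\cong\bR^4 \) onto the \( B^3 \)-part of the invariant torsion modulo \( \partial(T^*\otimes\so(3)) \) (the \( B^1 \)-summand of Lemma~\ref{lemma:B3invtor} being exactly what is absorbed by that quotient), after which the formula is a finite component computation. The paper phrases this as explicitly inverting the composite \( \bR^4\to B^3\oplus B^1\to B^3 \), whereas you verify the stated \( \tau_\lambda \) directly; these are the same calculation run in opposite directions.
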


\begin{proof}
The alternating map \( T^*\otimes\so(3)^\perp\to\Lambda^2T^*\otimes T \) induces an isomorphism
\( \bR^4\rightarrow B^3\oplus B^1\rightarrow B^3 \); here the second map is the projection, and the inclusion \( B^3\oplus B^1\subset\Lambda^2T^*\otimes T \) is the one given in Lemma~\ref{lemma:B3invtor}. Explicit computation of the inverse then gives the stated formula.
\end{proof}

Combining the above observations with Proposition \ref{Prop:curv_int_tor}, we see that invariant intrinsic torsion structures are locally uniquely determined by the torsion. 
\begin{theorem}
\label{thm:citclassifies}
Two \( \SO(3) \)-structures with invariant intrinsic torsion are locally equivalent if and only if they have the same intrinsic torsion. If, in addition, the underlying manifolds are complete, connected and simply-connected, then the structures are globally equivalent.
\end{theorem}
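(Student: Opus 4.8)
The plan is to realize an \( \SO(3) \)-structure with invariant intrinsic torsion as an instance of the situation governed by Proposition \ref{Prop:curv_int_tor}, and then apply a Cartan–Ambrose–Singer type argument. First I would observe that, by Lemma \ref{lemma:ticonstant}, on a connected manifold the quadruplet \( \lambda=(\lambda_1,\lambda_2,\lambda_3,\lambda_4) \) is a constant; so the intrinsic torsion \( \tau=\tau_\lambda \), viewed via Lemma \ref{lemma:tauS} as a constant \( \SO(3) \)-invariant element of \( T^*\otimes\so(3)^\perp \), is parallel for the canonical connection \( D \) (indeed \( D\tau \) is the covariant derivative of a constant section of an associated bundle with \( \SO(3) \)-invariant fibre element, hence vanishes). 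Feeding \( D\tau=0 \) into Proposition \ref{Prop:curv_int_tor}, the curvature \( \Omega \) of the canonical connection equals \( R^{\so(3)}(\tau\otimes\tau,0) \), a fixed \( \SO(3) \)-equivariant expression in \( \tau \) alone; since \( \tau \) is parallel, so is \( \Omega \), i.e. \( D\Omega=0 \). Thus the canonical connection has parallel torsion and parallel curvature: the structure is locally homogeneous, in the sense of Ambrose–Singer, with model infinitesimal data \( (\mathfrak{so}(3),\ T,\ \partial(\tau_\lambda),\ R^{\so(3)}(\tau_\lambda\otimes\tau_\lambda,0)) \) depending only on \( \lambda \).

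Next I would invoke the uniqueness half of the Cartan–Ambrose–Singer theorem (equivalently, Singer's theorem on locally homogeneous spaces, or the Ambrose–Singer construction applied to the canonical connection as an Ambrose–Singer connection). Two \( \SO(3) \)-structures with invariant intrinsic torsion \( \lambda \) and \( \lambda' \) give rise to canonical connections whose torsion and curvature tensors, at any pair of points, are carried into one another by a linear isometry of the model tangent space \( T=V\oplus V \) commuting with \( \SO(3) \) precisely when \( \lambda=\lambda' \) — here one uses that \( R^{\so(3)} \) and \( \partial \) are \( \SO(3) \)-equivariant and that the \( \SO(3) \)-invariant part of \( T^*\otimes\so(3)^\perp \) is exactly \( 4\bR \), so the datum \( \lambda \) is a complete invariant of the infinitesimal model. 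When \( \lambda=\lambda' \), the standard holonomy/parallel-transport argument produces, around any chosen pair of base points, a local diffeomorphism intertwining the two canonical connections together with their torsion and curvature; such a map automatically intertwines the Levi-Civita connections (by Proposition \ref{Prop:curv_int_tor} the Levi-Civita data is a universal function of \( (\tau,D\tau) \)) and preserves the \( \SO(3) \)-reduction, hence is an isomorphism of \( \SO(3) \)-structures. This proves the local statement.

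For the global statement, assume both manifolds are complete, connected and simply-connected. The canonical connection is a metric connection with parallel torsion and curvature, so by completeness it is \emph{analytic} and, by the classical theorem on extending local isometries of complete simply-connected spaces with parallel curvature (Nomizu–Ozeki / the Ambrose–Cartan globalization), any local equivalence between the two structures extends to a global one; concretely, one fixes frames over base points, develops the unique local isomorphism furnished by the local statement, and uses simple-connectedness plus completeness to see that analytic continuation along paths is path-independent and defined everywhere, yielding a global \( \SO(3) \)-structure isomorphism.

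The main obstacle I anticipate is the first paragraph's claim that \( D\tau=0 \): one must be careful that ``invariant intrinsic torsion'' (a pointwise \( \SO(3) \)-invariance of the value of \( \tau \) in the associated bundle) together with constancy of the \( \lambda_i \) really does force the covariant derivative \( D\tau \) to vanish, rather than merely lie in a fixed subspace. This is where Lemma \ref{lemma:tauS} is essential: it exhibits \( \tau_\lambda \) as the image, under the \( \SO(3) \)-equivariant map \( \bR^4\to T^*\otimes\so(3)^\perp \), of the constant \( \lambda \); since that map is built from \( \SO(3) \)-invariant tensors (\( \sigma \) and the \( \eta_i \)) it is canonical-connection-parallel, so \( D\tau_\lambda \) is the image of \( d\lambda=0 \). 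Once this is pinned down, the remaining steps are standard applications of the Ambrose–Singer and Nomizu–Ozeki machinery, and the only bookkeeping is checking equivariance of the maps \( R^{\so(3)} \), \( R^{\so(6)} \), \( \partial \) — all already supplied by Proposition \ref{Prop:curv_int_tor}.
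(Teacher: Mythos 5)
Your proposal follows essentially the same route as the paper: constancy of the \( \lambda_i \) (Lemma~\ref{lemma:ticonstant}), parallelism of the torsion and curvature of the canonical connection via Proposition~\ref{Prop:curv_int_tor}, and then the local and global uniqueness theorems for affine maps with parallel torsion and curvature (the paper cites Kobayashi--Nomizu VI.7.4 and VI.7.8 where you invoke Cartan--Ambrose--Singer and Nomizu--Ozeki), followed by the observation that an affine map with respect to an \( \SO(3) \)-connection carries one reduction into the other.

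The one point to correct is precisely the one you flagged as the main obstacle, namely the claim that \( D\tau=0 \). In Proposition~\ref{Prop:curv_int_tor} the symbol \( D\tau \) denotes the exterior covariant derivative of \( \tau \) regarded as a tensorial \( 1 \)-form in \( \Omega^1(P,\so(3)^\perp) \), not the covariant derivative of the corresponding section of the associated bundle. The latter does vanish, for exactly the reason you give; the former does not. Writing \( \tau \) as the constant equivariant function \( P\to T^*\otimes\so(3)^\perp \) applied to the tautological form \( \theta \), the Leibniz rule gives \( D\tau=\Theta\hook\tau=\partial(\tau)\hook\tau \), which is the formula the paper records in this proof and uses again later (e.g.\ in the curvature computation of Proposition~\ref{prop:inv_tor_Einstein}). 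So your identity \( \Omega^{\so(3)}=R^{\so(3)}(\tau\otimes\tau,0) \) is wrong as stated. Fortunately this does not damage the argument for the present theorem: \( \partial(\tau)\hook\tau \) is still a universal algebraic function of \( \tau \), hence the curvature is still a constant map on \( P \) determined by \( \lambda \) alone, and the rest of your proof goes through unchanged once the second argument of \( R^{\so(3)} \) is corrected.
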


\begin{proof}
Let \( P\to M \) be an \( \SO(3) \)-structure with invariant intrinsic torsion.
By Lemma~\ref{lemma:ticonstant}, the intrinsic torsion is constant which means the function \( \tau\colon P\to T^*\otimes\so(3)^\perp \), defined in Lemma~\ref{lemma:tauS}, is constant and so parallel. Thus, as a tensorial form \( \tau\in \Omega^1(P,\so(3)^\perp) \), we have 
\begin{equation*}
D\tau = \Theta\hook \tau, 
\end{equation*}
where \( \Theta\in\Omega^2(P,T) \) denotes the torsion, and \( \hook \) represents the contraction
\begin{equation*}
(\Lambda^2T^*\otimes T)\otimes (T^*\otimes \so(3)^\perp) \to \Lambda^2T^*\otimes \so(3)^\perp.
\end{equation*}

By Proposition~\ref{Prop:curv_int_tor}, we have
\( \Omega^{\so(3)} = R^{\so(3)}(\tau\otimes\tau, D\tau) \),
showing that the curvature of the canonical connection is completely determined by the intrinsic torsion. The curvature is therefore constant as a map
\begin{equation*}
P\to \bR^3\subset\Lambda^2\otimes\so(3). 
\end{equation*}
This implies that \( \Omega^{\so(3)} \) is parallel, and the same applies to \( \Theta \). 

Consider now two such structures \( P\to M \) and \( P'\to M' \) satisfying \( \tau(u)=\tau'(u') \). Then, by \cite[Theorem VI.7.4]{KobayashiNomizu:vol1}, there is a local affine isomorphism mapping \( u \) to \( u' \). Since it is affine with respect to an \( \SO(3) \) connection, it maps \( P \) into \( P' \), thereby giving a local equivalence.

If \( M \) and \( M' \) are connected, simply-connected and complete, then \cite[Theorem VI.7.8]{KobayashiNomizu:vol1} implies that the equivalence can be extended globally.
\end{proof}

\begin{remark}
In the  constant intrinsic torsion setting, the statement of Theorem~\ref{thm:citclassifies} holds under the conditions mentioned in Remark~\ref{remark:pperpinjective}.
\end{remark}

By the proof of the above theorem, the canonical connection \( \nabla \) is an \emph{Ambrose-Singer connection}, meaning its torsion and curvature tensors are parallel.  In other terms  \cite{Ambrose-S:OnHomogeneousRimeannian}, \( (M,\nabla) \) is locally homogeneous. The following result, whose proof we shall defer, implies that \( M \) is, in fact, locally isometric to a homogeneous space.

\begin{corollary}
\label{cor:locallyhomogeneous}
A Riemannian manifold with an \( \SO(3) \)-structure with invariant intrinsic torsion is locally isometric to a Lie group with a left-invariant metric. Moreover any element of the subspace \( 4\bR \) of the space of intrinsic torsion can be realized in this way.
\end{corollary}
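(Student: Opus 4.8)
The plan is to exploit Theorem \ref{thm:citclassifies}: an \( \SO(3) \)-structure with invariant intrinsic torsion is determined, up to local equivalence, by the quadruplet \( \lambda=(\lambda_1,\lambda_2,\lambda_3,\lambda_4)\in\bR^4 \). So it suffices, for each \( \lambda \), to exhibit a single Lie group with left-invariant metric carrying such a structure with exactly that intrinsic torsion. The natural candidate is the ``tautological'' construction sketched in the introduction: take \( \bR^6 \) with its fixed basis \( e^1,\dots,e^6 \), the forms \( \sigma,\eta_i \) of Section \ref{sec:intrtor}, and declare the \( e^i \) to be left-invariant one-forms on a Lie group whose structure constants are chosen so that the Chevalley--Eilenberg differential \( d \) reproduces precisely the equations \eqref{eq:inv_tor_ext_der}. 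Concretely, I would read off from \eqref{eq:inv_tor_ext_der} the values of \( de^1,\dots,de^6 \) (these are forced: \( d\sigma \), the \( d\eta_i \) and \( d\hat\gamma \) are explicit quadratic expressions in the \( e^i \) with coefficients linear in \( \lambda \), and since \( \sigma=e^{12}+e^{34}+e^{56} \) is non-degenerate the system determines \( de^i \) as elements of \( \Lambda^2(\bR^6)^* \)), and then take \( \g_\lambda \) to be the vector space \( \bR^6 \) with the bracket dual to this \( d \). The left-invariant \( \SO(3) \)-structure on the simply-connected group \( G_\lambda \) integrating \( \g_\lambda \) then has, by construction, the prescribed invariant intrinsic torsion, and Theorem \ref{thm:citclassifies} identifies it locally with the given structure; left-invariant metrics are automatically complete, so on the universal cover the identification is even global.

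The one genuine thing to check is that the prospective \( d \) really does square to zero, i.e. that the bracket is a Lie bracket (the Jacobi identity). For this I would argue as follows. Apply \( d \) to each equation in \eqref{eq:inv_tor_ext_der}. Since \( \lambda \) is constant (Lemma \ref{lemma:ticonstant}) and there are no invariant \( 5 \)-forms so \( d\sigma^2=0 \), the right-hand sides of the first four equations are closed automatically; the content is \( d(d\sigma)=0 \), which expands to a quadratic identity in \( \lambda \) among the various invariant \( 4 \)-forms appearing in \( \tfrac34(\lambda_1-\lambda_3)\gamma+\tfrac34(\lambda_2-\lambda_4)\hat\gamma+\beta \). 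Rather than verify \( d^2=0 \) by brute force on a \( 6\times 15 \) table of structure constants, I would package it representation-theoretically: \( d^2 \) on one-forms lands in \( (\bR^6)^*\otimes\Lambda^3(\bR^6)^* \), its \( \SO(3) \)-invariant part is what must vanish, and by \eqref{eq:int_3forms} together with the absence of \( A^2B^5 \) this invariant obstruction sits in a low-dimensional space whose vanishing reduces to a handful of scalar identities in \( \lambda \). Alternatively — and this is probably cleaner for the write-up — one invokes Proposition \ref{Prop:curv_int_tor} and the last paragraph of the proof of Theorem \ref{thm:citclassifies}: the canonical connection has parallel torsion and curvature, so \( (M,\nabla) \) is an Ambrose--Singer space, hence locally a reductive homogeneous space \( \tilde G/\tilde H \); restricting attention to the subgroup generated by the transvections, or simply noting that in our situation the isotropy can be taken inside \( \SO(3) \) which acts trivially on \( \tau \), one extracts a transitive Lie group of dimension six acting simply transitively, i.e. a Lie group with left-invariant metric. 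That abstract route sidesteps the Jacobi computation entirely.

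I would present both: first the soft Ambrose--Singer argument to establish local isometry with \emph{some} Lie group with left-invariant metric (this already follows from the discussion after Theorem \ref{thm:citclassifies}, since \( (M,\nabla) \) is locally homogeneous and, the isotropy algebra being contained in \( \so(3) \) and annihilating the constant tensor \( \tau \), it can be normalized to zero — giving a simply transitive action), and then, for the ``moreover'' clause, the explicit construction of \( \g_\lambda \) from \eqref{eq:inv_tor_ext_der} to show that \emph{every} \( \lambda\in 4\bR \) is realized; here one does owe the reader the check \( d^2=0 \), which I expect to be the main obstacle and which I would dispatch by the invariance argument above (the obstruction lives in a space with no room for it, or it reduces to the single scalar relation coming from \( dd\sigma=0 \) after one expands \( d\beta \) and \( d\gamma \), \( d\hat\gamma \) using \eqref{eq:inv_tor_ext_der} recursively). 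The completeness and global statement then come for free from left-invariance and simple-connectivity as in Theorem \ref{thm:citclassifies}.
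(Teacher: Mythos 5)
There is a genuine gap, and it sits exactly where you suspected: the check that your ``tautological'' \( d \) squares to zero. First, a smaller problem: the equations \eqref{eq:inv_tor_ext_der} do not determine \( de^1,\dots,de^6 \). The maps \( l_\alpha \) of Theorem \ref{thm:SO3formsdetermineIT} all vanish on \( T^*\otimes\so(3) \), so even among \( \SO(3) \)-equivariant operators \( d \) there is a two-parameter family --- the \( \mu\in B^1\cong(T^*\otimes\so(3))^{\SO(3)} \) of Lemma \ref{lemma:B3invtor} --- realizing the same \( \lambda\in B^3 \) and hence the same exterior derivatives of all invariant forms; non-degeneracy of \( \sigma \) does not pin \( d \) down. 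Second, and fatally for the shortcut: the Jacobi identity is \emph{not} automatic, and the obstruction does not live in a space ``with no room for it''. The \( \SO(3) \)-invariant part of \( \Hom(T^*,\Lambda^3T^*) \) is four-dimensional, and \( d^2=0 \) is the genuinely nontrivial quadratic condition \( \rank Q_{\lambda,\mu}\leqslant 1 \). For instance \( \lambda=u_1^2u_2 \) with \( \mu=0 \) violates it; a nonzero \( \mu \) is forced. The existence, for \emph{every} \( \lambda \), of some \( \mu \) making \( (\lambda,\mu) \) a Lie algebra is precisely the content of Lemma \ref{lemma:b1tensorb3} and Proposition \ref{prop:inv_tor_flat_conn}: the admissible pairs are parameterized by formal products \( \bx\cdot\by \) of a linear and a quadratic form, the intrinsic torsion is the product polynomial \( \bx\by \), and surjectivity onto \( B^3 \) holds because every real cubic has a linear factor. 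That factorization argument is the heart of the paper's proof and is absent from yours.

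The ``soft'' Ambrose--Singer route does not rescue the first clause either. Parallel torsion and curvature give local homogeneity, i.e.\ a local model \( \tilde{\G}/\LH \) with isotropy algebra contained in \( \so(3) \); but that isotropy is the holonomy of the canonical connection, which is generically nonzero (the canonical connection is not flat on \( S^3\times S^3 \), say), and a reductive homogeneous space with nontrivial isotropy need not be locally a Lie group with left-invariant metric --- the nearly-K\"ahler \( S^6=\G_2/\SU(3) \) is an Ambrose--Singer space which is not. The fact that the isotropy annihilates \( \tau \) does not let you ``normalize it to zero''. The paper instead deduces the first clause \emph{from} the second: once every \( \lambda \) is realized on a Lie group via the variety \( \Mmod \), Theorem \ref{thm:citclassifies} identifies an arbitrary structure with that model locally. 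So the logical order is realization first, local isometry second --- the reverse of your plan --- and the realization step is exactly where the real work lies.
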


Following \cite{Tricerri-V:homstruc}, there are eight classes of homogeneous structures which are defined according to the action of the orthogonal group on \( T^*\otimes \Lambda^2T^* \); we have \( T^*\otimes \Lambda^2T^*\cong \mathcal J_1\oplus\mathcal J_2\oplus\mathcal J_3 \) with \( \mathcal J_1\cong T^* \) and \( \mathcal J_3\cong \Lambda^3T^* \). From Lemma~\ref{lemma:tauS}, we see that \((M,\nabla) \) can either have mixed type \( \mathcal J_2\oplus \mathcal J_3 \) or pure type \( \mathcal J_3 \), also referred to as \emph{naturally reductive}; the latter case happens precisely when  \( \lambda_2+3\lambda_4=0=3\lambda_1+\lambda_3 \).

Before addressing the proof of Corollary \ref{cor:locallyhomogeneous}, we should like to emphasize that the interesting invariant intrinsic torsion \( \SO(3) \)-structures are precisely those which are not symplectic, in the following sense:

\begin{proposition}
\label{prop:locflat}
An \( \SO(3) \)-structure with invariant intrinsic torsion is symplectic if and only if it is locally equivalent to \( T^*\bR^3 \) with its flat \( \SO(3) \)-structure.
\end{proposition}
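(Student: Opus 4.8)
The plan is to show that, for an $\SO(3)$-structure with invariant intrinsic torsion, the symplectic condition $d\sigma=0$ is equivalent to the vanishing of the intrinsic torsion, and then to combine this observation with Proposition~\ref{Prop:curv_int_tor} and Theorem~\ref{thm:citclassifies}.

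First I would read $d\sigma$ off the last line of~\eqref{eq:inv_tor_ext_der}. Since $d\sigma$ is obtained from the (invariant) torsion by an $\SO(3)$-equivariant map, it must take values in the $4$-dimensional trivial submodule of $\Lambda^3T^*$; a convenient basis for this module is $f_1=e^{135}$, $f_2=e^{136}+e^{145}+e^{235}$, $f_3=e^{146}+e^{236}+e^{245}$, $f_4=e^{246}$, in terms of which $\gamma=f_1-f_3$, $\hat\gamma=f_2-f_4$, while the two $3$-forms occurring in $\beta$ are $f_2+3f_4$ and $3f_1+f_3$. Substituting into~\eqref{eq:inv_tor_ext_der} and collecting terms yields
\begin{equation*}
d\sigma=3\lambda_1 f_1+\lambda_2 f_2+\lambda_3 f_3+3\lambda_4 f_4 .
\end{equation*}
As the $f_i$ are linearly independent, $d\sigma=0$ holds if and only if $\lambda_1=\lambda_2=\lambda_3=\lambda_4=0$, that is, precisely when the intrinsic torsion vanishes.

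It then remains to identify the torsion-free structure. By Proposition~\ref{Prop:curv_int_tor} (equivalently Remark~\ref{remark:pperpinjective}), vanishing torsion forces vanishing curvature, so such a structure is flat. Conversely, $T^*\bR^3$, whose tangent bundle is trivialized as $V\oplus V$, carries the flat $\SO(3)$-structure whose canonical connection is the flat Levi-Civita connection; this structure is torsion-free, hence its intrinsic torsion is the zero element of $4\bR$, and it is symplectic because $\sigma=e^{12}+e^{34}+e^{56}$ has constant coefficients in the standard coordinates. Theorem~\ref{thm:citclassifies} now applies: two $\SO(3)$-structures with invariant intrinsic torsion sharing the same intrinsic torsion are locally equivalent, so every symplectic such structure is locally equivalent to $T^*\bR^3$. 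The reverse implication is immediate, as closedness of $\sigma$ is preserved by local equivalence.

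The only point requiring care is the first step: one must be sure that no cancellations occur among $\gamma$, $\hat\gamma$ and the two components of $\beta$, so that $d\sigma=0$ genuinely forces all four $\lambda_i$ to vanish rather than merely some linear combinations. Passing to the basis $f_1,\dots,f_4$ of invariant $3$-forms makes this transparent; everything after that is a direct application of the two cited results.
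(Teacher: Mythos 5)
Your proof is correct and follows essentially the same route as the paper: both reduce the statement to the observation that $d\sigma$ determines the entire $4\bR$-component of the intrinsic torsion (so symplectic forces torsion-free) and then invoke Theorem~\ref{thm:citclassifies} together with the flat model on $T^*\bR^3$. The only difference is that you verify the key linear-algebra step by explicit expansion in the basis $f_1,\dots,f_4$ — your formula $d\sigma=3\lambda_1f_1+\lambda_2f_2+\lambda_3f_3+3\lambda_4f_4$ agrees with \eqref{eq:halfflat_intrtor} — whereas the paper simply cites this fact.
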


\begin{proof}
The space \( T^*\bR^3 \) can be equipped with a natural \( \SO(3) \)-structure, compatible with the canonical symplectic form. This structure has vanishing intrinsic torsion and is, by Theorem \ref{thm:citclassifies}, locally uniquely determined by this condition. 

In general, we note that the exterior derivative of \( \sigma \) completely determines the component of the intrinsic torsion isomorphic to \( 4\bR \). Therefore, 
a symplectic manifold with invariant \( \SO(3) \)-intrinsic torsion must be torsion-free. The statement of the proposition now follows by local uniqueness.
\end{proof}

\subsection{The invariant torsion variety \( \Mmod \)}

Let \( e^1,\ldots, e^6 \) be an \( \SO(3) \) adapted basis of the dual of the Lie algebra \( \g^* \) of \( \G \), or, correspondingly, think of this as a coframe on \( \G \) that determines a left-invariant \( \SO(3) \)-structure. The torsion of the flat connection can then be expressed as
\begin{equation}
\label{eq:tor_flat_conn} 
\sum_ide^i\otimes e_i, 
\end{equation} 
and the intrinsic torsion of the structure is invariant precisely when \eqref{eq:tor_flat_conn} is an element of \( B^3\oplus\partial(T^*\otimes\so(3)) \). 

We shall now investigate the Lie algebras that are determined by elements \( \kappa_{\lambda,\mu} \) as in Lemma~\ref{lemma:B3invtor}. By the above, these have invariant intrinsic torsion.
In order to appropriately parameterize this family of Lie algebras, we introduce the following algebraic variety:
\begin{equation*} 
\Mmod = \{[\lambda_1:\lambda_2:\lambda_3:\lambda_4:\mu_1:\mu_2]\in\pR^5\mid \rank Q_{\lambda,\mu}=1\}, 
\end{equation*}
where
\begin{equation*} 
Q_{\lambda,\mu}=\begin{pmatrix} -\tfrac23\lambda_2+\tfrac12\mu_1 & -\frac13\lambda_3+\tfrac12\mu_2 & \lambda_1 \\ 
-\tfrac23\lambda_3-\tfrac12\mu_2 &- \lambda_4 & \frac13\lambda_2+\frac12\mu_1\end{pmatrix}.
\end{equation*}

\begin{lemma}
\label{lemma:b1tensorb3}
The isomorphism of \( \SL(2,\bR) \)-modules \( B^1\otimes B^2\cong B^3\oplus B^1 \),
\begin{multline*}
(x_1u_1+x_2u_2)\cdot(y_1u_1^2+y_2u_1u_2+y_3u_2^2)\\
\mapsto
((x_1y_1,x_1y_2+x_2y_1, x_1y_3+x_2y_2, x_2y_3 ),(\tfrac23(2x_2y_1-x_1y_2) ,\tfrac23(x_2y_2-2x_1y_3)))
\end{multline*}
induces a \( \PSL(2,\bR) \)-equivariant isomorphism \( \pR^1\times \pR^2\to\Mmod \).
\end{lemma}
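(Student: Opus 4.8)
The plan is to verify directly that the stated map $B^1\otimes B^2\to B^3\oplus B^1$ is an isomorphism of $\SL(2,\bR)$-modules, and then to track what the condition $\rank Q_{\lambda,\mu}=1$ corresponds to under the decomposable locus. First I would observe that a pair $([x],[y])\in\pR^1\times\pR^2$ is sent to the class of $(\lambda,\mu)$ obtained by multiplying $x_1u_1+x_2u_2$ with $y_1u_1^2+y_2u_1u_2+y_3u_2^2$ and then splitting the product in $B^3\otimes S^2(B^1)$ according to the given formula; the factor $\tfrac23$ in the $B^1$-part is exactly what makes the splitting $\SL(2,\bR)$-equivariant, as one checks using the projection $B^1\otimes B^2\to B^1$ that appeared (in transposed form) in the proof of Lemma~\ref{lemma:B3invtor}. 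Equivariance of polynomial multiplication $B^1\otimes B^2\to B^1\otimes S^2(B^1)$ is automatic, so the composite is $\SL(2,\bR)$-equivariant; since both sides are $4+2=6$-dimensional, it suffices to note that the image of a single decomposable tensor, say $u_1\cdot u_1^2=u_1^3$, is nonzero, and then Schur-type considerations (or a direct rank computation of the bilinear map) give surjectivity, hence bijectivity, onto the decomposable cone.

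The next step is the identification of the image of $\pR^1\times\pR^2$ with $\Mmod$. Here I would write out $Q_{\lambda,\mu}$ in terms of $x,y$ by substituting $\lambda_1=x_1y_1$, $\lambda_2=x_1y_2+x_2y_1$, $\lambda_3=x_1y_3+x_2y_2$, $\lambda_4=x_2y_3$, $\mu_1=\tfrac23(2x_2y_1-x_1y_2)$, $\mu_2=\tfrac23(x_2y_2-2x_1y_3)$. A short computation should show that each of the six entries of $Q_{\lambda,\mu}$ becomes a monomial of bidegree $(1,1)$ in $(x,y)$ up to scalar — more precisely that $Q_{\lambda,\mu}$ factors as a column vector in $x$ times a row vector in $y$, i.e. $Q_{\lambda,\mu}=\xi(x)\,\zeta(y)^{\mathsf T}$ for suitable linear $\xi\colon\bR^2\to\bR^2$ and $\zeta\colon\bR^3\to\bR^3$ (or with a transpose/permutation); this is precisely the content of the Segre picture alluded to in the introduction. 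From such a factorization it is immediate that $\rank Q_{\lambda,\mu}\le1$, and that it equals $1$ unless $x=0$ or $y=0$, so the image of $\pR^1\times\pR^2$ lands in $\Mmod$. Conversely, given $[\lambda:\mu]\in\Mmod$, the rank-one condition lets one recover $[x]$ as the (projectivized) column span and $[y]$ from a nonzero row, uniquely; checking that the resulting pair maps back to $[\lambda:\mu]$ is then a matter of matching coefficients, which reduces to the already-established bijectivity of $B^1\otimes B^2\to B^3\oplus B^1$ restricted to the decomposable cone (the point being that $\Mmod$ is exactly the projectivization of that cone, since $\rank=1$ matrices are the images of decomposable tensors).

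Finally I would address equivariance under $\PSL(2,\bR)$: the map $B^1\otimes B^2\to B^3\oplus B^1$ is $\SL(2,\bR)$-equivariant, and $-I\in\SL(2,\bR)$ acts by $(-1)^1$ on $B^1$, $(-1)^2$ on $B^2$, $(-1)^3$ on $B^3$, and $(-1)^1$ on $B^1$, so the scalar $(-1)^{1+2}=(-1)^{3}=-1$ agrees on both the source $\pR^1\times\pR^2$ (trivially) and target $\Mmod\subset\pR^5$ (trivially), hence the projectivized map descends to a $\PSL(2,\bR)$-equivariant bijection. The main obstacle I anticipate is not conceptual but bookkeeping: getting the normalizations in the $B^3\oplus B^1$ splitting exactly right so that the substitution into $Q_{\lambda,\mu}$ produces a genuine rank-one matrix with no stray coefficients — in particular checking that the somewhat asymmetric-looking entries $-\tfrac23\lambda_2+\tfrac12\mu_1$ and $\tfrac13\lambda_2+\tfrac12\mu_1$ collapse to clean bilinear monomials. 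I would therefore carry out that single substitution carefully and present it as the crux of the argument, leaving the surjectivity of polynomial multiplication and the $-I$-equivariance as routine remarks.
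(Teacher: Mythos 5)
Your proposal is correct and follows essentially the same route as the paper: first establish the linear isomorphism \( B^1\otimes B^2\cong B^3\oplus B^1 \), then recognise the induced projective map as (a coordinate change of) the Segre embedding, whose image is exactly the rank-one locus \( \Mmod \). The outer-product factorisation you anticipate does hold --- substituting the stated \( \lambda,\mu \) into \( Q_{\lambda,\mu} \) yields the product of the column vector \( (x_1,x_2)^{\mathsf T} \) with the row vector \( (-y_2,-y_3,y_1) \) --- and this single computation is precisely what the paper compresses into the remark that the coefficient of the second summand ``has been chosen so as to obtain \( \Mmod \) as the image''.
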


\begin{proof}
The fact that the stated map is an isomorphism
\( B^1\otimes B^2\cong B^3\oplus B^1 \) 
follows from the computation:
\begin{multline*}
(x_1u_1+x_2u_2)\cdot (y_1u_1^2+y_2u_1u_2 + y_3u_2^2) \\
 =x_1y_1u_1^3 + (x_1y_2+x_2y_1)u_1^2 u_2 + (x_2y_2+x_1y_3)u_2^2 u_1 + x_2y_3u_2^3\\
+\tfrac13(x_1y_2-2x_2y_1) (u_1\cdot u_1 u_2-u_2\cdot u_1^2)\\
+\tfrac13(x_2y_2-2x_1y_3)(u_2\cdot u_1 u_2-u_1\cdot u_2^2);
\end{multline*}
the coefficient of the second summand has been chosen so as to obtain \( \Mmod \) as the image of the induced projective map  \( \pR^1\times\pR^2\to\pR^5 \). Up to a change of coordinates, this latter map is the Segre embedding and therefore an isomorphism.
\end{proof}

Prompted by Lemma \ref{lemma:b1tensorb3}, we will represent  elements of  \( \Mmod \) by formal products of polynomials:
\begin{equation*}
(x_1u_1+x_2u_2)\cdot(y_1u_1^2+y_2u_1u_2+y_3u_2^2).
\end{equation*}
\begin{proposition}
\label{prop:inv_tor_flat_conn}
Modulo rescaling of the metric each point of \( \Mmod\cong\pR^1\times\pR^2 \) corresponds to an invariant intrinsic torsion \( \SO(3) \)-structure on a non-Abelian Lie algebra \( \g \). The structural equations of \( \g \), in terms of the adapted basis \( e^i \) of \( \g^* \), are given by
\begin{equation}
\begin{aligned}
\label{eq:lieinvtor_proj}
de^1 &=( x_2 y_1- x_1 y_2)e^{35}- x_1y_3(e^{36}+e^{45})- x_2 y_3 e^{46},\\
de^3 &= -( x_2 y_1- x_1 y_2)e^{15}+ x_1 y_3(e^{16}+e^{25})+ x_2 y_3 e^{26},\\
de^5 &= ( x_2 y_1- x_1 y_2)e^{13}- x_1y_3(e^{14}+e^{23})- x_2 y_3 e^{24},\\
de^2 &=  x_1y_1 e^{35} -( x_1 y_3- y_2 x_2)e^{46}+ x_2y_1(e^{36}+e^{45}), \\
de^4 &= - x_1 y_1e^{15} +( x_1 y_3- y_2 x_2)e^{26}- x_2 y_1(e^{16}+e^{25}),\\
de^6 &=  x_1 y_1e^{13} - ( x_1 y_3- y_2 x_2)e^{24}+ x_2 y_1(e^{14}+e^{23}). 
\end{aligned}
\end{equation}

In addition, the associated flat connection has invariant torsion, and the intrinsic torsion is represented by the expanded product
\begin{equation*}
x_1y_1u_1^3+(x_1y_2+x_2y_1)u_1^2u_2 + (x_1y_3+x_2y_2)u_1u_2^2+x_2y_3u_2^3.
\end{equation*}

In particular the intrinsic torsion determines a surjective \( \PSL(2,\bR) \)-equivariant map \( \pR^1\times \pR^2\to \pR^3 \).
\end{proposition}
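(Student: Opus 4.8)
The plan is to verify Proposition~\ref{prop:inv_tor_flat_conn} by unwinding the two preceding lemmas and performing a direct computation. First I would use Lemma~\ref{lemma:b1tensorb3} to pass from a point of $\pR^1\times\pR^2$, represented by a formal product $(x_1u_1+x_2u_2)\cdot(y_1u_1^2+y_2u_1u_2+y_3u_2^2)$, to the corresponding data $(\lambda,\mu)\in B^3\oplus B^1$, namely $\lambda=(x_1y_1,\,x_1y_2+x_2y_1,\,x_1y_3+x_2y_2,\,x_2y_3)$ and $\mu=\tfrac23(2x_2y_1-x_1y_2,\,x_2y_2-2x_1y_3)$. Plugging these into the formula for $\kappa_{\lambda,\mu}$ from Lemma~\ref{lemma:B3invtor} gives an element of $\Lambda^2T^*\otimes T$. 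By the discussion around \eqref{eq:tor_flat_conn}, the torsion of the flat connection of a left-invariant $\SO(3)$-structure is $\sum_i de^i\otimes e_i$, and the invariant part of the torsion is precisely $\kappa_{\lambda,\mu}$ modulo $\partial(T^*\otimes\so(3))$; so I would read off $de^1,\dots,de^6$ from $\kappa_{\lambda,\mu}$ together with the $\so(3)$-ambiguity, choosing the representative that matches the stated structure equations \eqref{eq:lieinvtor_proj}. Concretely, $\kappa_{\lambda,\mu}(e^1)$ and $\kappa_{\lambda,\mu}(e^2)$ determine $de^3,de^5,de^4,de^6$ up to the $\so(3)$-part, and $\SO(3)$-equivariance (together with the cyclic symmetry among the pairs $(e^1,e^2),(e^3,e^4),(e^5,e^6)$ built into the map of Lemma~\ref{lemma:B3invtor}) propagates this to all six equations.

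Next I would check that these $de^i$ indeed define a Lie algebra, i.e.\ $d^2=0$. This is where the variety $\Mmod$ enters: by Lemma~\ref{lemma:b1tensorb3} the condition $\rank Q_{\lambda,\mu}=1$ is exactly the image of the Segre embedding, and one verifies by a short computation that $d^2e^i=0$ holds identically on the locus parameterized by $(x_1u_1+x_2u_2)\cdot(y_1u_1^2+y_2u_1u_2+y_3u_2^2)$ — the factorization of the quadratic part is what makes the Jacobi identity work. Then I would confirm non-Abelianness: the structure equations \eqref{eq:lieinvtor_proj} vanish identically only if all of $x_2y_1-x_1y_2$, $x_1y_3$, $x_2y_3$, $x_1y_1$, $x_1y_3-x_2y_2$, $x_2y_1$ vanish, which forces $(x_1,x_2)=0$ or $(y_1,y_2,y_3)=0$, impossible for a point of $\pR^1\times\pR^2$; so $\g$ is always non-Abelian. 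The phrase ``modulo rescaling of the metric'' accounts for the $\GL(2,\bR)$ (equivalently $\bR^*$-scaling) freedom already discussed in Section~\ref{sec:intrtor}: a projective point only determines the structure up to this rescaling.

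For the intrinsic torsion statement, I would invoke the identification from Lemma~\ref{lemma:tauS} and the surrounding discussion: the invariant intrinsic torsion of the structure with torsion \eqref{eq:tor_flat_conn} is the $B^3$-component of that torsion under $\partial^{-1}$, which by construction of the map in Lemma~\ref{lemma:B3invtor} is just $\lambda$. Hence the intrinsic torsion equals $\lambda_1u_1^3+\lambda_2u_1^2u_2+\lambda_3u_1u_2^2+\lambda_4u_2^3$, which in terms of $(x,y)$ is the expanded product $x_1y_1u_1^3+(x_1y_2+x_2y_1)u_1^2u_2+(x_1y_3+x_2y_2)u_1u_2^2+x_2y_3u_2^3$, i.e.\ precisely polynomial multiplication $B^1\times B^2\to B^3$, $(\bx,\by)\mapsto\bx\by$. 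This map is manifestly $\PSL(2,\bR)$-equivariant (multiplication of polynomials commutes with linear substitution in $u_1,u_2$, up to determinant factors absorbed projectively) and surjective, since every degree-three polynomial over $\bR$ factors into a linear times a quadratic factor; this yields the final claim.

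The main obstacle is bookkeeping rather than conceptual: matching the $\so(3)^\perp$-valued $\kappa_{\lambda,\mu}$ against the $\Lambda^2T^*\otimes T$-valued torsion $\sum de^i\otimes e_i$ requires pinning down the $\partial(T^*\otimes\so(3))$ representative correctly so that the symmetry among the three coordinate pairs is respected, and then carrying out the $d^2=0$ verification; the latter is a finite but somewhat involved computation whose vanishing is guaranteed precisely by the rank-one condition defining $\Mmod$. Everything else is a direct substitution using Lemmas~\ref{lemma:B3invtor}, \ref{lemma:ticonstant}, \ref{lemma:tauS} and~\ref{lemma:b1tensorb3}.
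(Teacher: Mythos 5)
Your proposal is correct and follows essentially the same route as the paper's proof: substitute the pair \( (\lambda,\mu) \) of Lemma~\ref{lemma:b1tensorb3} into \( \kappa_{\lambda,\mu} \) to obtain the structure equations, identify \( d^2=0 \) with the rank-one (Segre) condition defining \( \Mmod \), and observe that the \( B^3 \)-projection is polynomial multiplication, hence \( \PSL(2,\bR) \)-equivariant and surjective. The only slight imprecision is the anticipated ``\( \so(3) \)-ambiguity'': there is none to resolve, since \( \kappa_{\lambda,\mu} \) is an actual element of \( \Lambda^2T^*\otimes T \) (not a class modulo \( \partial(T^*\otimes\so(3)) \)), and setting \( de^i=\kappa_{\lambda,\mu}(e^i) \), extended cyclically, yields \eqref{eq:lieinvtor_proj} directly.
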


\begin{proof}
After relabelling we find that an element \( \kappa_{\lambda,\mu} \), given as in Lemma~\ref{lemma:B3invtor}, corresponds to the structural equations
\begin{equation}
\begin{gathered}
\label{eq:lieinvtor}
de^1=ae^{35}+be^{46}+c(e^{36}+e^{45}),\quad  de^2=qe^{35}+pe^{46}+r(e^{36}+e^{45}),
\end{gathered}
\end{equation}
and so forth, where
\begin{equation*}
\begin{gathered}
a=-\tfrac13\lambda_2+\mu_1,\quad b=-\lambda_4,\quad c=-\tfrac13\lambda_3+\tfrac12\mu_2,\\
q=\lambda_1,\quad p=\tfrac13\lambda_3+\mu_2,\quad r=\tfrac13\lambda_2+\tfrac12\mu_1.
\end{gathered}
\end{equation*}
Note that rescaling of the metric amounts to the change \( e^i\mapsto z \tilde{e}^i \), for a non-zero constant \( z \).
Then, letting \( \tilde{a}=a\slash z,\ldots,\tilde{r}=r\slash z \), we have that \( [\tilde{a}:\ldots:\tilde{r}]=[a:\ldots :r] \) in \( \pR^5 \) and \( d\tilde{e}^1=\tilde{a}\tilde{e}^{35}+\tilde{b}\tilde{e}^{46}+\tilde{c}(\tilde{e}^{36}+\tilde{e}^{45}) \). Also observe that \eqref{eq:lieinvtor} becomes \eqref{eq:lieinvtor_proj} upon the substitution:
\begin{equation*}
\begin{gathered}
a =x_2 y_1- x_1 y_2,\quad b=- x_2 y_3,\quad c=-x_1y_3,\\
q=x_1y_1,\quad p=-  x_1 y_3+ x_2y_2,\quad r= x_2y_1.
\end{gathered}
\end{equation*}

These equations define a Lie algebra provided \( d^2=0 \), corresponding to the Jacobi identity. Computing \( d^2e^i \), using \eqref{eq:lieinvtor}, we find that this condition can be rephrased in terms of the set of equations
\begin{equation*}
bq-cr=0=ab-br-c^2+cp=ar-cq+pq-r^2.
\end{equation*}
Having excluded the Abelian case, these constraints are equivalent to the condition that
\begin{equation*}
1=\rank \begin{pmatrix} a-r & c & q \\ c-p & b & r \end{pmatrix}=\rank Q_{\lambda,\mu},
\end{equation*}
giving the asserted correspondence with points of \( \Mmod \).

By construction, the subset cut out by \( d^2=0 \) is preserved by the action of \( \SL(2,\bR) \), since the natural map \( \Hom(T^*,\Lambda^2T^*)\to \Hom(\Lambda^2T^*,\Lambda^3T^*) \) is \( \GL(6,\bR) \)-equivariant. Hence, the Jacobi identity defines a \( \GL(6,\bR) \)-invariant subvariety of \( \Hom(T^*,\Lambda^2T^*) \). Its intersection with the \( \SL(2,\bR) \)-module \( B^1\oplus B^3 \) is necessarily preserved by \( \SL(2,\bR) \). By Lemma~\ref{lemma:b1tensorb3}, this action of \( \SL(2,\bR) \) is the standard action on \( \pR^1\times\pR^2 \).

By construction, the projection on \( B^3 \) that gives the intrinsic torsion is given by polynomial multiplication. This is a surjective map because every third degree polynomial has a linear factor.
\end{proof}

Motivated by Proposition \ref{prop:inv_tor_flat_conn}, we  shall refer to \( \Mmod \) as the \emph{invariant torsion variety}. It is the properties of this variety that allows us to complete the proof of Corollary~\ref{cor:locallyhomogeneous}.

\begin{proof}[Proof of Corollary~\ref{cor:locallyhomogeneous}]
Surjectivity of the polynomial multiplication map tells us that any value of invariant intrinsic torsion can be realized in terms of a left-invariant \( \SO(3) \)-structure on a Lie group; this gives the last part of the statement. In addition, by Theorem~\ref{thm:citclassifies}, this implies that any invariant intrinsic torsion \( \SO(3) \)-structure must be locally isometric to a left-invariant structure on a Lie group.
\end{proof}

By the isomorphism \( \Mmod\cong\pR^1\times\pR^2 \) of Proposition \ref{prop:inv_tor_flat_conn}, it follows that, up to the action of \( \GL(2,\bR) \), there are five cases of invariant intrinsic torsion:

\begin{lemma}
\label{lem:5models}
There are precisely five \( \GL(2,\bR) \)-orbits in \( \pR^1\times\pR^2 \), determined by the elements:
\begin{equation*}
u_1\cdot (u_1^2-u_2^2), \quad  u_1\cdot u_2^2, \quad  u_1\cdot u_1^2, \quad  u_1\cdot u_1u_2, \quad  u_1\cdot (u_1^2+u_2^2).
\end{equation*}
\end{lemma}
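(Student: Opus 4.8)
The plan is to normalise the first factor, thereby reducing to a question about a binary quadratic form under the real affine group, and then to read off the five cases from the possible configurations of its roots. Since \( \GL(2,\bR) \) acts transitively on \( \bR^2\setminus\{0\} \), it acts transitively on \( \pR^1 \), so every orbit in \( \pR^1\times\pR^2 \) contains a pair \( (u_1,\by) \); it then remains to classify \( \by\in\pR^2 \) under the stabiliser \( B=\mathrm{Stab}_{\GL(2,\bR)}([u_1]) \). This \( B \) is a Borel subgroup, and modulo scalars it acts on \( \mathbb{CP}^1 \) fixing the real point \( P_0 \) at which the form \( u_1 \) vanishes; on the complementary affine line \( \mathbb{CP}^1\setminus\{P_0\}\cong\bC \) it acts as the real affine group \( \{\,z\mapsto az+b:a\in\bR^\times,\ b\in\bR\,\} \). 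All the argument will use is that this group acts transitively on \( \bR \), \( 2 \)-transitively on \( \bR \) (via \( z\mapsto az+b \)), and transitively on the upper half-plane \( \{\,\im z>0\,\} \).

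Next I would observe that a nonzero \( \by\in B^2 \) determines an effective degree-two divisor on \( \mathbb{CP}^1 \) which is invariant under complex conjugation, since \( \by \) has real coefficients. Relative to \( P_0 \) exactly five mutually exclusive configurations can occur for this divisor: \( (i) \) \( 2[P_0] \); \( (ii) \) \( [P_0]+[R] \) with \( R\in\bR \), \( R\neq P_0 \); \( (iii) \) \( 2[R] \) with \( R\in\bR \), \( R\neq P_0 \); \( (iv) \) \( [R_1]+[R_2] \) with \( R_1\neq R_2 \) both real and both distinct from \( P_0 \); \( (v) \) \( [R]+[\bar R] \) with \( R\notin\bR \). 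The point is that these five types are separated by \( \GL(2,\bR) \)-invariant data: whether \( \bx\mid\by \), whether \( \bx^2\mid\by \), and the sign of the discriminant \( \Delta(\by) \), all of which are preserved because \( \GL(2,\bR) \) acts on \( \bR[u_1,u_2] \) by graded algebra automorphisms. Hence configurations of different types lie in different orbits; and since every \( \by \) falls into one of the five, there are at most five orbits. Conversely, using the transitivity properties of the affine group recorded above, within each type one can move the divisor to a standard position, with \( P_0 \) sent to the point at infinity: \( 2[\infty] \), \( [\infty]+[0] \), \( 2[0] \), \( [1]+[-1] \), or \( [i]+[-i] \). Translating these back into polynomials, via the correspondence between a linear form and its zero, identifies the resulting representatives with, respectively, \( u_1\cdot u_1^2 \), \( u_1\cdot u_1u_2 \), \( u_1\cdot u_2^2 \), \( u_1\cdot(u_1^2-u_2^2) \) and \( u_1\cdot(u_1^2+u_2^2) \), which is the asserted list.

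The step needing the most care is this last identification: one must track the complex-conjugate pairs of roots correctly, keep in mind the scalar ambiguities inherent in working in \( \pR^1 \) and \( \pR^2 \) rather than with honest polynomials, and pin down the precise dictionary between a linear form \( c_1u_1+c_2u_2 \) and the point of \( \mathbb{CP}^1 \) it cuts out when matching, say, \( \{1,-1\} \) to \( u_1^2-u_2^2 \) and \( \{i,-i\} \) to \( u_1^2+u_2^2 \). None of this is difficult, but it is where a bookkeeping slip would be easiest to make. The group-theoretic inputs — transitivity of \( \GL(2,\bR) \) on \( \pR^1 \), the structure of its Borel subgroups, and the elementary transitivity properties of the real affine group — are all standard.
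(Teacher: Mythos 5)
Your proof is correct and is in substance the paper's own argument: the paper classifies the cubic product \( \bx\by \) by the number and multiplicity of its real roots and then records the choice of linear factor, whereas you normalise the linear factor first and classify \( \by \) under the Borel stabiliser of \( [u_1] \) -- a dual organisation of the same root-configuration count. The five types and the invariants separating them (\( \bx\mid\by \), \( \bx^2\mid\by \), the sign of \( \Delta(\by) \)) are exactly those of the paper's Table~\ref{table:poly}, and your transitivity checks for the real affine group supply the details the paper leaves to the reader.
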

\begin{proof}
Third degree polynomials are classified according to the number and multiplicity of their real roots and the formal products are determined by choosing a linear factor. Using the action of \( \GL(2,\bR) \), the statement follows.
\end{proof}

In relation to Lemma \ref{lem:5models}, we have five naturally defined subvarieties of \( \Mmod \). Indeed, we can think of the invariant torsion variety in terms of pairs of polynomials, 
\begin{equation}
\label{eq:cls_pol}
\bx =x_1u_1+x_2u_2,\quad \by =y_1u_1^2+y_2u_1u_2 + y_3u_2^2,
\end{equation}
and let 
\begin{equation}
\label{eq:dis_res}
\Delta(\by )=y_2^2-4 y_3 y_1, \quad R(\bx ,\by )= x_2^2 y_1+ y_3 x_1^2- y_2 x_2 x_1
\end{equation}
denote the discriminant of \( \by  \) and the resultant of \(\bx ,\by  \), respectively. Then we can we define \( \Mmod_+\subset\Mmod \) as the open subvariety corresponding to the case when \( \by  \) has two different real roots and \( \gcd(\bx ,\by )=1 \), meaning it is defined by \( \Delta>0 \) and \( R\neq0 \). Similarly, \( \Mmod_-\subset\Mmod \) corresponds to the situation when \( \by  \) has no real roots and \( \gcd (\bx ,\by )=1 \), so that we have \( \Delta<0, R\neq0 \) as defining conditions. A third subvariety, \( \Smod_1\cong\pR^1\times\pR^1 \), occurs when \( \by  \) has a double root, meaning \( \Delta=0 \). Likewise, we can consider the subvariety \( \Smod_2\cong\pR^1\times\pR^1 \) corresponding to the case when \( \bx  \) and \( \by  \) have a common root, i.e., \(R=0\). The intersection  \( \mathcal C=\Smod_1\cap \Smod_2\cong\pR^1 \) corresponds to \( \by  \) being a multiple of \( \bx ^2 \).

A natural question is what are the isomorphism classes of Lie algebras constituting these five varieties. The list of Lie algebras that appear can be anticipated from \cite[Section 8]{Agricola-al:NaturallyReductive}, where the authors classify naturally reductive homogeneous structures in dimension  six. In order to understand this, note that \( 2\bR\subset T^*\otimes \so(3) \) is transverse to \( \Lambda^3T^* \). As a consequence, there is a unique equivariant map
\begin{equation*} 
p\colon 4\bR\subset T^*\otimes\so(3)^\perp\to T^*\otimes \so(3), \quad p(\xi)+\xi\in\Lambda^3T^*. 
\end{equation*}
This means we can modify the canonical connection, using \( p(\tau_\lambda) \), so as to obtain invariant skew-symmetric torsion and, in particular, a naturally reductive homogeneous structure. The adjusted Ambrose-Singer connection has torsion given by the \( 3 \)-form
\begin{multline*}
\tfrac12\lambda_1(e^{235}+e^{145}+e^{136})+\tfrac12(2\lambda_1+\lambda_3) e^{246}\\
-\tfrac12(\lambda_2+2\lambda_4) e^{135}-\tfrac12\lambda_4(e^{146}+e^{236}+e^{245}),
\end{multline*}
and this is an example of an \emph{\( \mathit{SU(3)} \)-instanton} in the sense of \cite{Carrion:instanton}; the \( 2 \)-form part of its curvature takes values in \( \mathfrak{su}(3)\subset\Lambda^2T^*_mM \). In fact, the curvature takes values in \( S^2(\so(3))\subset\Lambda^2T^*_mM\otimes\so(3) \). 

We find that the correspondence between our five varieties and the five different Lie algebras of \cite[Section 8]{Agricola-al:NaturallyReductive} is as follows:
\begin{theorem}
\label{thm:inv_tor_clsf}
The complement of \( \mathcal S_1\cup  \mathcal S_2 \), in the invariant torsion variety, contains exactly two components \( \Mmod_\pm \). The isomorphism classes of points of \( \Mmod \) are then as follows:
\vspace{.5mm}
\begin{compactenum}
 \item points of \( \Mmod_+ \) correspond to the semi-simple Lie algebra \( \so(3)\oplus\so(3) \);
 \item points of\( \Mmod_- \) correspond to the semi-simple Lie algebra \( \so(3,\bC) \);
 \item points of \( \Smod_1\setminus \mathcal C \) correspond to the semi-direct product \( \so(3)\ltimes\bR^3 \);
\item points of \( \Smod_2\setminus \mathcal C \) correspond to the direct sum \( \so(3)\oplus\bR^3 \); 
\item points of \( \mathcal C = \mathcal S_1\cap \mathcal S_2 \) correspond to the nilpotent Lie algebra\\ \( (0,0,0,12,13,23) \).
\end{compactenum}
\end{theorem}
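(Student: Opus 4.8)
The plan is to use the explicit structural equations \eqref{eq:lieinvtor_proj} supplied by Proposition \ref{prop:inv_tor_flat_conn} together with the $\GL(2,\bR)$-classification of Lemma \ref{lem:5models}. Since every statement in the theorem is a claim about an isomorphism class of Lie algebra, and isomorphism class is manifestly invariant under the $\GL(2,\bR)$-action on $\pR^1\times\pR^2$ (this action is realized inside $\GL(6,\bR)$, hence changes the coframe $e^i$ without changing $\g$), it suffices to treat the five representative products listed in Lemma \ref{lem:5models}, one for each orbit. First I would observe that $\Mmod\setminus(\Smod_1\cup\Smod_2)$ is precisely the locus $\Delta\ne 0$, $R\ne 0$, which splits into the two open pieces $\Mmod_+=\{\Delta>0\}$ and $\Mmod_-=\{\Delta<0\}$; connectedness of each piece follows from the description $\Mmod\cong\pR^1\times\pR^2$, where $\pR^1$ is connected and $\{\Delta>0\}$, $\{\Delta<0\}$ are the two connected components of the complement in $\pR^2$ of the conic $\Delta=0$.

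Next, for each of the five representatives I would substitute the corresponding values of $x_1,x_2,y_1,y_2,y_3$ into \eqref{eq:lieinvtor_proj} and read off the Lie bracket. For $\bx\cdot\by=u_1\cdot(u_1^2-u_2^2)$ (that is $x_1=1,x_2=0,y_1=1,y_2=0,y_3=-1$) the equations become, e.g., $de^1=-e^{35}+(e^{36}+e^{45})-\,\text{(term)}$, and I would identify the resulting six-dimensional algebra with $\so(3)\oplus\so(3)$, most efficiently by exhibiting two commuting copies of $\so(3)$, for instance via the diagonal/anti-diagonal combinations $e^{2i-1}\pm e^{2i}$ that reflect the $V\oplus V$ splitting. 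For $u_1\cdot(u_1^2+u_2^2)$ one gets $\so(3,\bC)$ by the analogous computation, the sign of $y_3$ turning the second $\so(3)$ factor into the imaginary part and yielding the complex simple algebra. For $u_1\cdot u_2^2$ (here $R\ne0$, $\Delta=0$) only the $y_3$-terms survive in the first three equations while $de^2=de^4=de^6=0$ read off from $y_1=y_2=0$; this exhibits an abelian ideal $\bR^3=\Span{e_2,e_4,e_6}$ on which the $\so(3)=\Span{e_1,e_3,e_5}$ acts by the standard representation, giving the semidirect product $\so(3)\ltimes\bR^3$. For $u_1\cdot u_1 u_2$ ($R=0$, $\Delta=0$, $\bx\nmid\by$) the only nonzero structure constants give $de^1=e^{35}$ and cyclically, with the second triple of generators closed, yielding $\so(3)\oplus\bR^3$. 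Finally $u_1\cdot u_1^2$ ($\bx^2\mid\by$, i.e. $\mathcal C$) produces the equations of the nilpotent algebra $(0,0,0,12,13,23)$ directly.

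The main obstacle is not the computations themselves — they are finite substitutions into \eqref{eq:lieinvtor_proj} — but rather the bookkeeping needed to (a) confirm that each representative really lands in the claimed subvariety ($\Mmod_\pm$, $\Smod_1\setminus\mathcal C$, $\Smod_2\setminus\mathcal C$, $\mathcal C$), which amounts to checking the sign of $\Delta(\by)$ and the vanishing of $R(\bx,\by)$ from \eqref{eq:dis_res} on each representative, and (b) produce a clean basis change exhibiting the abstract isomorphism type, particularly distinguishing $\so(3)\oplus\so(3)$ from $\so(3,\bC)$, where one must verify the Killing form has the correct signature (definite versus split on the relevant subspaces). A subtle point worth stating explicitly is that the two open components of $\Mmod\setminus(\Smod_1\cup\Smod_2)$ are genuinely distinct as Lie algebras — this is why $\Delta>0$ and $\Delta<0$ cannot be connected within $\Mmod\setminus(\Smod_1\cup\Smod_2)$ — and this is already forced by the isomorphism-class computation, since $\so(3)\oplus\so(3)\not\cong\so(3,\bC)$. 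The correspondence with the five algebras of \cite[Section 8]{Agricola-al:NaturallyReductive} then follows by matching structural equations, using that (as noted just before the theorem) our adjusted Ambrose--Singer connection has skew-symmetric torsion, hence each of these geometries is naturally reductive in the sense of that reference.
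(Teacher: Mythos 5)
Your overall strategy coincides with the paper's: use $\GL(2,\bR)$-equivariance to reduce to the five orbit representatives of Lemma~\ref{lem:5models} and identify each Lie algebra from the structural equations \eqref{eq:lieinvtor_proj}. (The paper identifies the semisimple cases via the Killing form, computing $\det F=(4\Delta R^2)^3$ and its signature, and the non-semisimple cases via the radical and derived algebra rather than by explicit basis changes, but your route would work equally well.) Two steps, however, do not go through as written.

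First, your connectedness argument for $\Mmod_+$ is incomplete. You show that $\pR^1\times\{\Delta>0\}$ is connected, but $\Mmod_+$ is the smaller set $\{\Delta>0,\ R\neq0\}$: for a fixed $\by$ with two distinct real roots, the fibre $\{[\bx]\colon R(\bx,\by)\neq0\}$ is $\pR^1$ minus two points, which is disconnected, so connectedness of $\Mmod_+$ does not follow from connectedness of the base together with connectedness of $\pR^1$. (Your argument is fine for $\Mmod_-$, since $\Delta<0$ already forces $R\neq0$.) The quickest repair is the one the paper uses: each of $\Mmod_\pm$ is a single $\GL(2,\bR)$-orbit by Lemma~\ref{lem:5models}, and such an orbit is connected because the stabilizer meets the non-identity component of $\GL(2,\bR)$ (e.g.\ $\diag(1,-1)$ fixes $u_1\cdot(u_1^2-u_2^2)$).

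Second, the computation you give for $u_1\cdot u_2^2$ is incorrect. With $x_1=1$, $x_2=0$, $y_1=y_2=0$, $y_3=1$, equation \eqref{eq:lieinvtor_proj} gives $de^2=-(x_1y_3-y_2x_2)e^{46}=-e^{46}$, and likewise $de^4=e^{26}$, $de^6=-e^{24}$; these are not zero. In fact $\Span{e_2,e_4,e_6}$ closes up to a copy of $\so(3)$ and the abelian ideal is $\Span{e_1,e_3,e_5}$ --- the opposite of the roles you assign. (The representative for which $de^2=de^4=de^6=0$ is $u_1\cdot u_1u_2$, i.e.\ the $\Smod_2\setminus\mathcal C$ case, which you do treat correctly.) The conclusion $\so(3)\ltimes\bR^3$ is still right, but the verification as stated fails; you must either redo the substitution or argue structurally as the paper does (the algebra is perfect, its radical is a three-dimensional abelian ideal, and the semisimple quotient carries an $\SO(3)$-invariant bracket, hence is $\so(3)$).
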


\begin{proof}
By Lemma~\ref{lem:5models}, each of the listed subvarieties is a \( \GL(2,\bR) \)-orbit and therefore connected.

Following \cite{Agricola-al:NaturallyReductive}, one uses the Killing form to distinguish the various Lie algebras and with respect to the basis \( e^1 \), \( e^3 \), \( e^5 \), \( e^2 \), \( e^4 \), \( e^6\), it can be expressed in terms of a block form matrix \( F \); this \( 6\times6 \) matrix consists of four blocks each of which is proportional to the identity matrix, \( \diag(1,1,1) \) and, in addition, the off-diagonal blocks are identical. It follows that \( F \) has at most two distinct eigenvalues, and therefore its rank equals a multiple of three.

The rank of \( F \) is maximal precisely when its determinant does not vanish. Referring to \eqref{eq:dis_res}, we find
\begin{equation*} 
\det F=(4\Delta R^2)^3.
\end{equation*} 
From this expression, it is clear that maximal rank corresponds to the open subvarieties \( \Mmod_\pm \) of \( \Mmod \). In this case \( \g \) is semi-simple and so, by the classification of semi-simple Lie algebras, must be a real form of \( \lsl(2,\bC)\oplus \lsl(2,\bC) \). Hence \( \g \) is either
\( \so(3,\bC)\) or the direct sum of two three-dimensional Lie algebras, each isomorphic to either \( \so(3) \) or \( \lsl(2,\bR) \). Since the signature of the Killing form is either \( (6,0) \) or \( (3,3)\), depending on whether \( \Delta \) is positive or negative, we see that the two situations correspond to \( \so(3)\oplus\so(3) \) or \( \so(3,\bC) \), respectively.

The intersection \( \Smod_1 \cap \Smod_2 \) is the orbit of \( u_1\cdot u_1^2 \). It follows from \eqref{eq:lieinvtor_proj} that each element of this curve in \( \Mmod \) corresponds to the nilpotent Lie algebra \( \g=(0,0,0,12,13,23) \).

The complement of \( \mathcal{C} \) in \( \Smod_1 \) is the orbit of \( u_1\cdot u_2^2 \). This consists of perfect Lie algebras whose radical is a three-dimensional ideal \( \mathfrak{r}=[\g,\g]^\perp=\g^\perp \) spanned by \( e_2, e_4, e_6 \). Computations show that \( \mathfrak r \) is Abelian. The semi-simple quotient \( \g\slash \mathfrak r  \) is a three-dimensional semi-simple Lie algebra whose Lie bracket is \( \SO(3) \)-invariant, hence necessarily \( \so(3) \). Therefore any point of \( \Smod_1\setminus \mathcal C \) corresponds to a Lie algebra which is isomorphic to the semi-direct product \( \so(3)\ltimes\bR^3 \). In this situation, \( \so(3) \) acts non-trivially on \( \bR^3 \), since \( [\g,\g]=\g \).

Finally, note that the complement of \( \mathcal C \) in \( \Smod_2 \) is the orbit of \( u_1\cdot u_1u_2 \).
In this case, points of the orbit have first Lie algebra Betti number equal to three, and
\([\g,\g]\) is easily seen to be isomorphic to \( \so(3) \). In addition, the radical of these Lie algebras, 
\([\g,\g]^\perp \), is an ideal that intersects the derived algebra trivially and so coincides with the center. In conclusion, we have an orbit consisting of Lie algebras \( \g=\so(3)\oplus\bR^3 \).
\end{proof}

The upshot of Lemma \ref{lem:5models} and Theorem \ref{thm:inv_tor_clsf} is the list of model geometries mentioned in the introduction of the paper:

\begin{corollary}
Up to the action of \( \GL(2,\bR) \) there are exactly five non-Abelian model geometries for invariant intrinsic torsion. In terms of pairs of polynomials \( (\bx ,\by ) \), as in \eqref{eq:cls_pol}, these are characterized by Table \ref{table:poly}.
\qed
\end{corollary}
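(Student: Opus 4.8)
The statement is essentially a compilation of the results of this section, so I expect the proof to be short. By Proposition~\ref{prop:inv_tor_flat_conn}, together with Corollary~\ref{cor:locallyhomogeneous}, every non-Abelian \( \SO(3) \)-structure with invariant intrinsic torsion is, up to rescaling the metric and up to local isometry, the left-invariant structure determined by a point of \( \Mmod\cong\pR^1\times\pR^2 \), and such a point is recorded by a formal product \( \bx\cdot\by \) of a linear form and a quadratic form as in~\eqref{eq:cls_pol}; two of these represent the same model precisely when the corresponding pairs lie in the same \( \GL(2,\bR) \)-orbit. The first assertion is then immediate from Lemma~\ref{lem:5models}: there are exactly five such orbits, hence exactly five non-Abelian models, and Theorem~\ref{thm:inv_tor_clsf} confirms that they are genuinely distinct, since it attaches to them five pairwise non-isomorphic Lie algebras.

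To obtain the characterization by Table~\ref{table:poly}, I would translate the defining conditions of \( \Mmod_\pm \), \( \Smod_1 \), \( \Smod_2 \), \( \mathcal C \) into divisibility statements about \( \bx \) and \( \by \). By~\eqref{eq:dis_res} the resultant vanishes, \( R(\bx,\by)=0 \), precisely when \( \bx\mid\by \); hence \( \Smod_2 \) is the locus \( \bx\mid\by \) and \( \mathcal C=\Smod_1\cap\Smod_2 \) is the locus \( \bx^2\mid\by \). The sign of \( \Delta(\by) \) then records whether \( \by \) has two distinct real roots (\( \Delta>0 \): combined with \( R\neq0 \) this is \( \Mmod_+ \)), a double root (\( \Delta=0 \): this is \( \Smod_1 \)), or no real root (\( \Delta<0 \): this is \( \Mmod_- \)). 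Running through the five representatives of Lemma~\ref{lem:5models}, one reads off for each the largest power of \( \bx \) dividing \( \by \) together with the sign of \( \Delta(\by) \); Theorem~\ref{thm:inv_tor_clsf} then supplies the isomorphism type of the associated Lie algebra, and, recalling that \( \so(3)\oplus\so(3) \), \( \so(3,\bC) \), \( \so(3)\ltimes\bR^3 \), \( \so(3)\oplus\bR^3 \) and \( (0,0,0,12,13,23) \) are the Lie algebras of, respectively, \( \SU(2)\times\SU(2) \), \( \SL(2,\bC) \), \( \SO(3)\ltimes\bR^3 \), \( \SO(3)\times\bR^3 \) and the indicated nilpotent group, one recovers the five cases of Table~\ref{table:poly}.

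I do not anticipate a genuine obstacle, since all the content is carried by Lemma~\ref{lem:5models} and Theorem~\ref{thm:inv_tor_clsf}; the only point deserving care is the bookkeeping above, namely verifying that the mutually exclusive cases \( \bx^2\mid\by \), \( \bx\mid\by \) with \( \bx^2\nmid\by \), and \( \bx\nmid\by \), refined by the sign of \( \Delta(\by) \), cut \( \pR^1\times\pR^2 \) into exactly the five \( \GL(2,\bR) \)-orbits and nothing more — equivalently, that the root-multiplicity pattern of \( \by \) relative to \( \bx \) is a complete \( \GL(2,\bR) \)-invariant of the pair. It is worth noting that this invariant is strictly finer than the \( \GL(2,\bR) \)-class of the cubic \( \bx\by \): the orbits \( \Smod_1\setminus\mathcal C \) and \( \Smod_2\setminus\mathcal C \) produce the same cubic up to \( \GL(2,\bR) \), hence locally equivalent \( \SO(3) \)-structures by Theorem~\ref{thm:citclassifies}, yet they are realized on the non-isomorphic Lie groups \( \SO(3)\ltimes\bR^3 \) and \( \SO(3)\times\bR^3 \) and so remain distinct models — which is why the count of five genuinely relies on recording \( (\bx,\by) \) rather than just the intrinsic torsion.
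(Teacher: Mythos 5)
Your proposal is correct and is essentially the paper's own argument: the corollary carries no proof beyond the preceding sentence declaring it the ``upshot'' of Lemma~\ref{lem:5models} and Theorem~\ref{thm:inv_tor_clsf}, and your dictionary (\( R(\bx,\by)=0 \) iff \( \bx\mid\by \), the locus \( \mathcal C \) iff \( \bx^2\mid\by \), the sign of \( \Delta(\by) \) recording the root pattern of \( \by \)) is exactly the intended bookkeeping, as is your remark that the model is the \( \GL(2,\bR) \)-class of the pair \( (\bx,\by) \) rather than of the cubic \( \bx\by \). One caveat: carrying out that bookkeeping does \emph{not} literally reproduce Table~\ref{table:poly} as printed --- the representative \( u_1\cdot u_1u_2 \) (so \( \bx\mid\by \), \( \Delta>0 \)) lies in \( \Smod_2\setminus\mathcal C \) and hence corresponds to \( \so(3)\oplus\bR^3 \) by Theorem~\ref{thm:inv_tor_clsf}, while \( u_1\cdot u_2^2 \) (so \( \bx\nmid\by \), \( \Delta=0 \)) lies in \( \Smod_1\setminus\mathcal C \) and corresponds to \( \so(3)\ltimes\bR^3 \), so the group entries in rows two and three of the table are interchanged relative to the theorem (a typo in the paper, as one checks directly from the structure equations \eqref{eq:lieinvtor_proj}); your argument, followed carefully, proves the corrected table.
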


\subsection{Curvature properties}

By Theorem \ref{thm:SO3formsdetermineIT}, one would expect curvature computations to be particularly simple for invariant intrinsic torsion structures. 
We now illustrate how this works by characterizing the locally conformally flat and Einstein metrics. 

It is well known that \( S^3\times S^3 \) admits two left-invariant Einstein metrics, namely the bi-invariant metric and the nearly-K\"ahler metric. The basis of one-forms \( h^1,\dotsc, h^6\) satisfying
\( dh^1=-h^{35} \), \( dh^2=-h^{46} \), and so forth, is orthonormal for the bi-invariant metric. In terms of this basis, an orthonormal coframe for the nearly-K\"ahler metric is given by
\begin{equation*} 
h^1+h^2,\, h^3+h^4,\, h^5+h^6,\, \sqrt3(h^1-h^2),\, \sqrt3(h^3-h^4),\, \sqrt3(h^5-h^6). 
\end{equation*}

There is a, seemingly, different Einstein metric which corresponds to the orthonormal coframe
\begin{equation*}
h^1-h^2,\, h^2,\, h^3-h^4,\, h^4,\,  h^5-h^6,\, h^6.
\end{equation*}
This metric can be distinguished from the first two by considering the ratio between the scalar curvature and the trace of the Killing form; this quantity is an invariant for Lie group isomorphisms that are also isometries and is different for the three metrics.  In terms of the orthonormal frame for the third metric, the structure constants appear as 
\begin{equation}
 \label{eqn:unknowneinsteinmetric}
\begin{gathered}
de^1 = - e^{45}- e^{36}- e^{35},\, de^3 = -e^{52}-e^{61}-e^{51},\, de^5 = - e^{13}- e^{23}- e^{14},\\
de^2 = - e^{46},\, de^4 = -e^{62},\, de^6 = - e^{24}.
 \end{gathered}
\end{equation}

\begin{remark}
The arguments of \cite{Nikonorov:CompacthomogeneousEinstein} imply that the first and third metrics are isometric up to homothety. The isometry, however, is not an isomorphism of Lie groups. 
\end{remark}

Verifiably each of these Einstein metrics corresponds to an \( \SO(3) \)-structure with invariant intrinsic torsion. The converse statement also holds by the second part of the following:
\begin{proposition}
\label{prop:inv_tor_Einstein}
Let \( P \) be an \( \SO(3) \)-structure with invariant intrinsic torsion on \( M \). Then: 
\vspace{.5mm}
\begin{compactenum}
\item the underlying metric is locally conformally flat if and only \( P \) is locally equivalent to \( T^*\bR^3 \);
\item if the underlying metric is non-flat and Einstein, then the scalar curvature is positive and \( (M,P) \) is locally equivalent to a structure on \( \SO(3)\times\SO(3) \). The Einstein metrics correspond to three \( \LU(1) \)-orbits in \( \pR^3 \), and each orbit corresponds to either the bi-invariant metric induced by the Killing form, the nearly-K\"ahler metric or the metric of \eqref{eqn:unknowneinsteinmetric}.  
\end{compactenum}
\end{proposition}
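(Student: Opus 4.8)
The plan is to reduce everything to an explicit computation on the Lie algebra models of Theorem~\ref{thm:inv_tor_clsf}, using Proposition~\ref{Prop:curv_int_tor} to express the Riemann curvature — hence the Ricci tensor and the Weyl tensor — as a universal \( \SO(3) \)-equivariant function of \( (\tau\otimes\tau, D\tau) \). Since \( \tau \) is constant (Lemma~\ref{lemma:ticonstant}) and \( D\tau = \Theta\hook\tau \) with \( \Theta \) constant (as in the proof of Theorem~\ref{thm:citclassifies}), all curvature quantities become explicit polynomials in the four parameters \( \lambda_1,\dots,\lambda_4 \), or equivalently in the coordinates of \( \pR^3 \) reached by the polynomial-multiplication map of Proposition~\ref{prop:inv_tor_flat_conn}. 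By Corollary~\ref{cor:locallyhomogeneous} every invariant intrinsic torsion structure is locally a left-invariant structure on one of the five Lie groups, so it suffices to analyse these representatives. For part~(i), I would compute the Weyl tensor \( W \) from \( R^{\so(6)} \): because \( \SO(3)\) acts on \( T \) preserving the splitting \( V\oplus V \) and scaling each factor, \( W=0 \) forces strong constraints; I expect that \( W \) vanishes only when \( \tau=0 \), and then Proposition~\ref{Prop:curv_int_tor} (or Proposition~\ref{prop:locflat}) gives flatness, hence local equivalence to \( T^*\bR^3 \). Conversely \( T^*\bR^3 \) is flat, a fortiori conformally flat, so (i) is an iff.

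For part~(ii), the key step is to write down the Ricci tensor \( \Ric \) as a quadratic form in the \( \lambda_i \) and impose the Einstein condition \( \Ric = \frac{s}{6}\,g \). Here I must be careful that the metric \( g \) is not the naive \( \SO(3) \)-invariant metric: the \( \GL(2,\bR) \)-freedom means \( g \) depends on an additional parameter (the choice of inner product on \( V\oplus V \) compatible with \( \sigma \)), so the Einstein equation is a system in both the torsion parameters and this metric parameter. Using the \( \SL(2,\bR)\)- (indeed \( \GL(2,\bR)\)-) equivariance of the whole set-up, I would first normalise via Lemma~\ref{lem:5models} to one of the five orbit representatives and check directly that \( \Mmod_- \) (signature \( (3,3) \) Killing form, \( \so(3,\bC) \)), \( \Smod_1\setminus\mathcal C \) (\( \so(3)\ltimes\bR^3 \)), \( \Smod_2\setminus\mathcal C \) (\( \so(3)\oplus\bR^3 \)) and \( \mathcal C \) (nilpotent) admit no non-flat Einstein metric in the family — for the solvable and nilpotent cases this follows from the classical fact that a non-flat left-invariant Einstein metric on a solvable group must have negative scalar curvature and be of a very restricted (standard) form, which one can rule out by a short direct check; for \( \so(3,\bC) \) one checks the finitely many invariant metrics directly. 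This leaves \( \Mmod_+ \), i.e.\ \( \SO(3)\times\SO(3) \), establishing the first assertion of (ii) and, from the explicit \( \Ric \), that \( s>0 \).

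It then remains to classify the Einstein loci inside \( \Mmod_+ \). The intrinsic torsion map sends \( \Mmod\cong\pR^1\times\pR^2 \) onto \( \pR^3 \) (Proposition~\ref{prop:inv_tor_flat_conn}), and on \( \Mmod_+ \) the relevant residual symmetry acting on the target is a circle \( \LU(1)\subset\GL(2,\bR) \) (the compact part of the stabiliser of the relevant structure). I would set up coordinates on \( \pR^3 \cong \{ \text{cubics}\} \) adapted to this \( \LU(1) \)-action — e.g.\ via the real and imaginary parts of roots — and write the Einstein condition, after eliminating the metric parameter, as a single \( \LU(1) \)-invariant equation. The expectation is that this equation cuts out exactly three \( \LU(1) \)-orbits; identifying them with the three known metrics is then a matter of matching structure constants, which is immediate for the bi-invariant metric (\( dh^1=-h^{35} \), etc., i.e.\ \( \by=u_1^2+u_2^2 \) up to scale and the appropriate factor of \( \bx \)), for the nearly-K\"ahler metric (the symmetric point \( \frac43(\eta_0+\eta_1+\eta_2) \), corresponding to \( W_3=0 \), \( W_1^\pm \) in fixed ratio, cf.\ \eqref{eq:SU3_intr_tor}), and for the metric of \eqref{eqn:unknowneinsteinmetric}, whose structure constants one reads off directly. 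The ratio of scalar curvature to the trace of the Killing form, mentioned before the proposition, then confirms the three are pairwise non-isometric as metric Lie groups (while the remark notes the first and third are abstractly isometric up to homothety). The main obstacle I anticipate is bookkeeping: carrying the extra metric parameter through \( R^{\so(6)} \) and solving the resulting system cleanly enough to see "exactly three \( \LU(1) \)-orbits" rather than a messy variety — the \( \GL(2,\bR) \)-equivariance is what should keep this under control, since it lets one work orbit-by-orbit and reduces the final step to a low-degree invariant-theoretic computation on \( \pR^3 \).
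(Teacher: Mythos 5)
Your overall skeleton matches the paper's: curvature is a universal quadratic expression in \( \tau \) via Proposition~\ref{Prop:curv_int_tor}; part (i) follows by showing that the \( \LU(1) \)-invariant scalar part of the Weyl tensor is a definite quadratic form in \( \tau \) (the paper isolates the component \( 2\bR\subset\mathcal W^{\SO(3)} \) and checks that the induced linear map \( 2\bR\subset S^2(C^3)\oplus S^2(C^1)\to 2\bR \) is injective); and part (ii) is an explicit computation of \( \Ric_0 \) followed by a \( \LU(1) \)-orbit count. There is, however, a genuine gap in your plan for the first assertion of (ii). You propose to rule out \( \Smod_1\setminus\mathcal C \) and \( \Smod_2\setminus\mathcal C \) by the classical fact that a non-flat left-invariant Einstein metric on a solvable group has negative scalar curvature and restricted form; but by Theorem~\ref{thm:inv_tor_clsf} these strata carry the Lie algebras \( \so(3)\ltimes\bR^3 \) and \( \so(3)\oplus\bR^3 \), which are \emph{not} solvable (the first is perfect, the second reductive with three-dimensional centre), so that fact does not apply to them. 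Similarly, \( \Mmod_- \) is a positive-dimensional family of metrics on \( \so(3,\bC) \), not ``finitely many invariant metrics''. The paper avoids any case split by Lie algebra: it writes \( \Ric_0=0 \) as three polynomial equations in the Segre coordinates \( (x_1,x_2,y_1,y_2,y_3) \) of Proposition~\ref{prop:inv_tor_flat_conn}, eliminates \( [x_1:x_2] \) by a resultant, and reads off that the solution set is a union of exactly three \( \LU(1) \)-orbits, all of which turn out to lie in \( \Mmod_+ \) with \( s=5t_1^2+5t_2^2-t_3^2-t_4^2>0 \).

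A second, smaller point: there is no ``additional metric parameter'' to carry. The quadruplet \( (\sigma,\eta_i) \) fixes the inclusion \( \SO(3)\subset\SO(6) \), hence the metric, so each point of \( \Mmod \) already determines a metric Lie algebra up to homothety; the \( \GL(2,\bR) \)-freedom you worry about is precisely motion along a \( \GL(2,\bR) \)-orbit inside \( \Mmod \). Your plan of first normalising to an orbit representative by Lemma~\ref{lem:5models} and then reintroducing the lost metric data is workable in principle, but it is exactly what creates the extra bookkeeping you anticipate; the paper sidesteps it by only ever quotienting by the metric-preserving circle \( \LU(1)\subset\SL(2,\bR) \) and performing the elimination globally on \( \Mmod \).
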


\begin{proof}
Whilst the group \( \SL(2,\bR) \) does not preserve metric properties, it contains a one-dimensional torus which certainly does and under which we have
\begin{equation*}
\begin{gathered}
B^3=\Span{ u_1^3-3u_1u_2^2, u_2^3-3u_1^2u_2}\oplus \Span{u_1^3+u_1u_2^2,u_1^2u_2+u_2^3}=C^3\oplus C^1,\\
B^2=\Span{u_1^2-u_2^2, u_1u_2}\oplus\Span{u_1^2+u_2^2}=C^2\oplus\bR.
\end{gathered}
\end{equation*}
Here \( C^k \) denotes the real irreducible representation of \( \LU(1) \) of weight \( k \). 

An element \( t=((t_1,t_2),(t_3,t_4)) \) of \( C^3\oplus C^1 \) corresponds to the intrinsic torsion 
\( \lambda=(t_1+t_3, -3t_2+t_4,-3t_1+t_3,t_2+t_4) \), and it is convenient to express the curvature in terms of the \( t_i \) rather than the \( \lambda_i \).

A Riemannian six-manifold has curvature that takes values in the module
\begin{equation*} 
\mathcal R=\mathcal W \oplus S^2(\bR^6),
\end{equation*}
which corresponds to the decomposition into the Weyl and Ricci tensors, respectively. 

By Proposition~\ref{Prop:curv_int_tor} and the argument used  in the proof of Theorem~\ref{thm:citclassifies}, the curvature of an \( \SO(3) \)-structure with invariant intrinsic torsion \( \tau \) has the form \( R^{\so(6)}(\tau\otimes \tau, \partial(\tau)\hook\tau) \). More precisely, we have
\begin{equation*}
\mathcal{W}^{\SO(3)}=2\bR\oplus C^4, \quad (S^2T^*)^{\SO(3)}=\bR\oplus C^2.
\end{equation*}
Indeed, computations show that 
\begin{equation*}
(S^2(\Lambda^2(A^2C^1)))^{\SO(3)}=2\bR\oplus S^2(\bR\oplus C^2)=4\bR\oplus C^2\oplus C^4,
\end{equation*}
and \( (\Lambda^4A^2C^1)^{\SO(3)}=\bR \).

By its very construction, the Weyl tensor is a \( \LU(1) \)-equivariant map, 
\begin{equation*} 
S^2(C^3\oplus C^1) \to 2\bR\oplus C^4,
\end{equation*}
which, in particular, means that its two scalar components come from a linear map
\begin{equation*} 
2\bR\subset  S^2(C^3)\oplus S^2(C^1)\to 2\bR.
\end{equation*}
The latter map is injective as can verified by choosing a basis. Consequently any \( P \) which is not (locally) flat will have Weyl tensor with non-trivial component in \( 2\bR\subset\mathcal W^{\SO(3)} \). In particular, \( P \) is locally conformally flat if and only if it is flat and therefore, by Proposition \ref{prop:locflat}, locally equivalent to \( T^*\bR^3 \).  

Considering next the traceless Ricci tensor, a computation shows this takes the form
\begin{multline*}
\Ric_0=-{2( t_4 t_1+2  t_3 t_4+ t_2 t_3)}(e_1\odot e_2+e_3\odot e_4+e_5\odot e_6)\\
+2  {(t_4^{2}-t_3^{2}+ t_3 t_1- t_2 t_4)}((e_1)^2+(e_3)^2+(e_5)^2-(e_2)^2-(e_4)^2-(e_6)^2);
\end{multline*}
the scalar curvature is given by \( s = 5t_1^2+5t_2^2-t_3^2-t_4^2 \).

In terms of the parameterization of Proposition~\ref{prop:inv_tor_flat_conn}, we see that the subset defined by \( \Ric_0=0 \) has the form
\begin{equation*}
\begin{gathered}
- x_1 x_2 y_1^{2}- x_1 y_3^{2} x_2- y_2 x_1^{2} y_1- y_2 y_3 x_2^{2}-6  x_1 y_3 x_2 y_1=0,\\
3  y_3^{2} x_2^{2}-4  x_1^{2} y_3 y_1+4  y_3 x_2^{2} y_1- y_2^{2} x_2^{2}+2  y_2 x_1 y_3 x_2-3  x_1^{2} y_1^{2}=0,\\
-2  y_2 x_1 x_2 y_1- x_1^{2} y_3^{2}+ x_2^{2} y_1^{2}+ y_2^{2} x_1^{2}=0.
\end{gathered}
\end{equation*}
We can eliminate \( [x_1:x_2] \) from these equations by taking the resultant of the two polynomials. Then we get the equation
\begin{equation*}
(y_1 + y_3)^2 (  (y_1+y_3)^2 + 4 y_1 y_3 - y_2^2 )^2 (3 (y_1+y_3)^2 +4 y_1 y_3- y_2^2)=0
\end{equation*}
which defines a \( \LU(1) \)-invariant curve in \( \pR^2 \); this curve has three disjoint components, each isomorphic to \( \pR^1 \).

One verifies that the preimage in \( \Mmod \)  of each component is also connected, and \( \LU(1) \) acts transitively on it since it acts transitively on both the base and fibre. The projection has degree two and one, according to the multiplicity of the defining equation as a factor in the resultant. Thus, up to \( \LU(1) \)-symmetry, we find the possibilities
\begin{equation*}
u_1\cdot (u_1^2-u_2^2), \quad u_1\cdot (u_1u_2+u_2^2), \quad u_1\cdot (u_1^2-3u_2^2).
\end{equation*}

The first of these points yields the Killing metric, the second point gives \eqref{eqn:unknowneinsteinmetric} and, in terms of \eqref{eq:lieinvtor}, the third solution reads 
\begin{equation*} 
[a:p:b:q:c:r]=[0: 3: 0 : 1 : 3:0];
\end{equation*}
this corresponds to the nearly-K\"ahler metric.
\end{proof}

\begin{remark}
From the proof of Proposition \ref{prop:inv_tor_Einstein}, we see that if \( P \) is not (locally) flat, then the Weyl curvature must have non-zero component in the trivial \( \LU(1) \)-modules \( 2\bR\subset\mathcal W^{\SO(3)} \).
\end{remark}

\begin{remark}
The nearly-K\"ahler metric on \( S^3\times S^3 \) can be viewed as an \( \SO(3) \)-invariant Einstein metric on \( \SO(4) \). A generalization to higher dimensions has been  considered in \cite{Pope:HomogeneousEinsteinmetrics}. The fact that Pope's metric on \( \SO(4) \) is the nearly-K\"ahler metric  can be verified by setting \( p=3, q=1 \) in \cite[Equation (16)]{Pope:HomogeneousEinsteinmetrics}.
\end{remark}

\subsection{Half-flat structures and contraction limits}

According to \eqref{eq:SU3_intr_tor} the \( \SU(3) \)-structure associated with an invariant intrinsic torsion \( \SO(3) \)-structu\bdash{}re is half-flat if and only if \( \lambda_2=\lambda_4 \). In this case, the \( \SU(3) \)-intrinsic torsion takes values in \( 3\bR\subset W_1^-\oplus W_3 \) which is not an \( \SL(2,\bR) \)-module; the largest subgroup of \( \GL(2,\bR) \) that fixes this subspace is \( \Sigma_3\times\bR^*\) with \( \Sigma_3 \) acting as in \eqref{eq:sigma3ingl2}. 

In terms of the invariant torsion variety, we have the following geometric description of the half-flat structures:

\begin{proposition}
\label{prop:halfflat}
Regarded as a subset of \( \pR^1\times\pR^2 \), the space of half-flat structures can be characterized as the blow-up of \( \pR^2 \) at \( [1:0:1] \). In particular, it is connected. The points projecting onto \( [1:0:1] \) correspond to Hermitian structures on \( \so(3,\bC) \). 
\end{proposition}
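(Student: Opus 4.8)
The plan is to work directly with the parametrisation from Proposition~\ref{prop:inv_tor_flat_conn}: half-flat means $\lambda_2=\lambda_4$, and since in those coordinates $\lambda_2 = x_1y_2+x_2y_1$ and $\lambda_4 = x_2y_3$, the half-flat locus inside $\pR^1\times\pR^2$ is cut out by the single bihomogeneous equation $x_1y_2+x_2y_1 = x_2y_3$, i.e. $x_1y_2 + x_2(y_1-y_3) = 0$. First I would rewrite this as a linear condition on $[x_1:x_2]$ with coefficients depending on $[y_1:y_2:y_3]$: for a fixed point of $\pR^2$ it determines a unique $[x_1:x_2]$ unless both coefficients $y_2$ and $y_1-y_3$ vanish, which happens exactly at $[y_1:y_2:y_3]=[1:0:1]$. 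Thus the projection to $\pR^2$ is one-to-one away from $[1:0:1]$ and has a full $\pR^1$ over that point; this is precisely the description of the blow-up of $\pR^2$ at $[1:0:1]$, and connectedness is immediate since the blow-up of a connected space along a point is connected. The main bookkeeping step is simply to check that the incidence variety $\{(\bx,\by) : x_1y_2+x_2(y_1-y_3)=0\}$ really is the blow-up and not merely birational to it — i.e. that it is smooth and the exceptional fibre is reduced — which follows by inspecting the defining equation in the two standard affine charts of $\pR^2$.

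Next I would identify the Lie algebra corresponding to the exceptional fibre. Over $[1:0:1]$ the polynomial $\by$ is (a multiple of) $u_1^2+u_2^2$, which has no real roots, so by Theorem~\ref{thm:inv_tor_clsf} every such point lies in $\Mmod_-$ and the underlying Lie algebra is $\so(3,\bC)$. It remains to see that the associated $\SU(3)$-structure is Hermitian, i.e. that the almost complex structure $J$ determined by $\gamma$ is integrable. Since the structure is half-flat, $W_1^\pm$ and $W_2^-$ can a priori be nonzero, but Hermitian means the Nijenhuis tensor $W_1^++W_1^-+W_2^++W_2^-$ vanishes; by \eqref{eq:SU3_intr_tor} we have $W_1^+ = \tfrac12(\lambda_2-\lambda_4)=0$ automatically on the half-flat locus, and $W_1^- = \tfrac12(\lambda_3-\lambda_1)$. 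Plugging $\by = u_1^2+u_2^2$ (so $y_1=y_3=1$, $y_2=0$) and the corresponding $[x_1:x_2]$ forced by the half-flat equation into the formulae for $\lambda_i = $ (coefficients of $\bx\by$), one computes $\lambda_1 = x_1y_1 = x_1$ and $\lambda_3 = x_1y_3 + x_2 y_2 = x_1$, so $W_1^-=0$ as well; a short check of $W_2$ via the structural equations \eqref{eq:lieinvtor_proj} then finishes integrability. Alternatively, and perhaps more cleanly, one observes that on $\so(3,\bC)$ with the left-invariant structure the complex structure $J$ is the multiplication-by-$i$ coming from the complexification, which is manifestly integrable; then one only has to match this $J$ with the one determined by $\gamma$ at the relevant points.

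The step I expect to be the genuine obstacle is the verification that $J$ is integrable — equivalently that all of $W_1,W_2$ vanish on the exceptional fibre — because the half-flat condition alone only kills $W_1^+\oplus W_2^+\oplus W_3^{\text{co}}$-type components in the $\SU(3)$ language, and one must rule out $W_2^-$ by hand using the explicit $\SO(3)$-intrinsic torsion of Lemma~\ref{lemma:tauS}. Concretely I would: (i) substitute the exceptional-fibre data into Lemma~\ref{lemma:tauS} to get $\tau_\lambda$ with $\lambda_1=\lambda_3$, $\lambda_2=\lambda_4$; (ii) project onto the $W_2$-components of Table~\ref{table:su3intrinsictorsion} using the $2V\oplus 2S^2_0(V)$ description; and (iii) check the projection vanishes, or instead invoke the ``complexification'' description of $\so(3,\bC)$ directly. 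Everything else — the blow-up description, connectedness, and the $\so(3,\bC)$ identification — is routine bookkeeping with the polynomial parametrisation already set up in Lemma~\ref{lemma:b1tensorb3} and Proposition~\ref{prop:inv_tor_flat_conn}.
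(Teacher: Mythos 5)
Your proposal is correct and follows essentially the same route as the paper: the half-flat locus is the incidence variety \( x_1y_2+x_2(y_1-y_3)=0 \) in \( \pR^1\times\pR^2 \), the projection to \( \pR^2 \) is an isomorphism away from \( [1:0:1] \), and on the exceptional fibre one computes \( \Delta=-4 \), \( R=x_1^2+x_2^2\neq 0 \) and \( \lambda_1=x_1=\lambda_3 \). The one place you diverge is in treating the vanishing of \( W_2^- \) as the ``genuine obstacle'' requiring a by-hand computation: this is actually immediate, because by Table~\ref{table:su3intrinsictorsion} the components \( W_2^\pm\cong\su(3)\cong 2V\oplus 2S^2_0(V) \) contain no trivial \( \SO(3) \)-summand, so an \emph{invariant} intrinsic torsion can never contribute to \( W_2 \) (this is why \eqref{eq:SU3_intr_tor} lists only \( W_1^\pm \) and \( W_3 \)). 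Hence integrability of \( J \) reduces exactly to \( W_1^+=0 \) (half-flat) and \( W_1^-=\tfrac12(\lambda_3-\lambda_1)=0 \), which you have already verified; steps (i)--(iii) of your final paragraph are unnecessary.
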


\begin{proof}
Considering the formal products of (non-zero) polynomials \( \bx,\by \) as in \eqref{eq:cls_pol}, the half-flat structures constitute the set
\begin{equation*}
\{ \bx\cdot\by\colon\, x_1y_2+x_2(y_1-y_3)=0 \}. 
\end{equation*}
In terms of the corresponding set of \( \pR^1\times \pR^2 \), the projection map onto the second factor
is an isomorphism away from \( [1:0:1]\in\pR^2 \). By choosing an affine chart centered at this point, the first part of the proposition is readily verified.

For the last assertion, we note that the product \( \bx\cdot(y_1u_1^2+y_2u_2^2) \) has \( \Delta=-4 \), \( R=x_1^2+x_2^2 \) and \( \lambda_1=x_1=\lambda_3 \). It follows that points in the preimage of \( [1:0:1] \) correspond to \( \so(3,\bC) \) and that the associated \( \SU(3) \)-structure has \( W_1^-=0 \), hence is Hermitian. 
\end{proof}

We shall now describe a way of deforming half-flat structures on \( \Mmod \). It is a type of Abelianization procedure which is sometimes referred to as \emph{Lie algebra contractions} (see, for instance, \cite{Gibbons-al:domainwalls}). To this end, we think of Lie algebras as points of the variety
\begin{equation}
\label{eq:Liealgvar}
\Dmod =\left\{d\colon (\bR^6)^*\to \Lambda^2(\bR^6)^* \mid \hat d \circ d=0\right\}, 
\end{equation}
consisting of Chevalley-Eilenberg operators \( d\colon\,\Lambda(\bR)^*\to\Lambda(\bR^6)^* \). Regarding elements of \( \Dmod \) as linear isomorphisms \( \bR^6\to\g \) to a Lie algebra, we note that \( \GL(6,\bR) \) acts naturally on \( \Dmod \) in two ways; we can use either left or right composition on the isomorphism. From our view point, the right action is distinguished by being independent of the underlying Lie algebra and compatible with the natural action on \( \Hom((\bR^6)^*,\Lambda^2(\bR^6)^*) \). In addition, the intrinsic torsion map is equivariant with respect to this action.
In these terms, a contraction limit is a procedure by which one obtains an element of \( \Dmod \) as the limit point of a curve \(t\mapsto d\cdot g_t\), \( g_t\in\GL(6,\bR) \).

A tractable subclass of contraction limits are those curves that arise as one-parameter subgroups. We restrict our attention further, namely to subgroups of \( \GL(2,\bR) \); this group preserves the invariant intrinsic torsion condition. It is then harmless to consider only subgroups of \( \SL(2,\bR) \), since the rescalings commute with \( \SL(2,\bR) \) so as to give rise to homothetic structures on the same Lie algebra.

The action of a one-parameter subgroup is entirely determined by the fundamental vector field associated to a generator
\begin{equation*}
A_{a,b,c}=\begin{pmatrix} a & b \\ c & -a \end{pmatrix} \in \lsl(2,\bR). 
\end{equation*}
Explicitly, the fundamental vector field on \( B^3 \) associated to \( A_{a,b,c} \) takes the form:
\begin{multline*}
X_{a,b,c}= (3a\lambda_1 + b\lambda_2)\D{}{\lambda_1}+ (3c\lambda_1+a\lambda_2 + 2b\lambda_3)\D{}{\lambda_2} \\
 + (2c\lambda_2-a\lambda_3+3b\lambda_4)\D{}{\lambda_3} + (c\lambda_3 -3a\lambda_4)\D{}{\lambda_4}.
\end{multline*}
Obviously, \( X_{a,b,c} \) will generally fail to preserve the half-flat condition. In fact, up to the action of \( \Sigma_3\times\bR^* \), there are precisely three elements of \( \lsl(2,\bR) \) that generate a flow of half-flat structures:  

\begin{proposition}
\label{prop:contr_lim}
Up to the action of \( \Sigma_3\times\bR^* \) there are three one-parameter subgroups in \( \GL(2,\bR) \) with a non-trivial orbit of half-flat structures. These are generated by the vector fields
\( X_{1,0,0} \), \( X_{0,1,3} \) and \( X_{0,1,-1} \).

The union of half-flat orbits is in each case a two-plane in \( B^3 \) which is given by 
\begin{equation*}
\begin{gathered}
\Pi_{1,0,0}=\left\{\lambda_2=\lambda_4=0\right\},\, \Pi_{0,1,3}=\left\{9\lambda_1=\lambda_3, \lambda_2=\lambda_4\right\},\\
 \Pi_{0,1,-1}=\left\{\lambda_1=\lambda_3, \lambda_2=\lambda_4\right\},
\end{gathered}
\end{equation*}
respectively.

The plane \( \Pi_{1,0,0} \) contains a representative of each Lie algebra appearing in Theorem~\ref{thm:inv_tor_clsf}. The second plane, \( \Pi_{0,1,3} \), contains only \( \so(3,\bC) \) and the nilpotent algebra. The third, \( \Pi_{0,1,-1} \), runs through \( \SU(3) \) reductions of a fixed Hermitian structure on \( \so(3,\bC) \) that only differ by a rotation of the complex volume form.
\end{proposition}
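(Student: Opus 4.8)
The plan is to compute the action of each generator $X_{a,b,c}$ on the half-flat locus and pin down when the orbit is non-trivially contained in it. First I would recall, from Proposition~\ref{prop:halfflat}, that the half-flat condition is $\lambda_2=\lambda_4$, so that a one-parameter subgroup generated by $A_{a,b,c}$ has an orbit of half-flat structures precisely when $X_{a,b,c}$ is tangent to the hyperplane $\{\lambda_2=\lambda_4\}$ along that hyperplane — equivalently, when $(X_{a,b,c}\lambda_2)-(X_{a,b,c}\lambda_4)$ vanishes identically on $\{\lambda_2=\lambda_4\}$. Using the explicit expression for $X_{a,b,c}$, the difference $(X_{a,b,c}\lambda_2)-(X_{a,b,c}\lambda_4)$ equals $3c\lambda_1+a\lambda_2+2b\lambda_3-c\lambda_3+3a\lambda_4$; restricting to $\lambda_2=\lambda_4$ this becomes $3c\lambda_1+(2b-c)\lambda_3+4a\lambda_2$. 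Requiring this to vanish as a polynomial in $(\lambda_1,\lambda_2,\lambda_3)$ forces $c=0$, $a=0$, $2b-c=0$, i.e. $a=c=0$; so the only generators with even a formal chance are the $X_{0,b,0}$. That is the wrong conclusion, so I must instead demand only that the orbit of \emph{some} half-flat point stay half-flat, not that the whole hyperplane be preserved — i.e. I look for invariant lines or planes inside $\{\lambda_2=\lambda_4\}$ rather than invariance of the hyperplane itself.

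So the refined approach: for each $A_{a,b,c}$, determine the largest $X_{a,b,c}$-invariant subspace of $B^3$ contained in the half-flat hyperplane $\{\lambda_2=\lambda_4\}$; the subgroup has a non-trivial half-flat orbit iff this subspace is non-zero. Concretely, I intersect $\{\lambda_2=\lambda_4\}$ with all its images under the operator $M_{a,b,c}$ representing $X_{a,b,c}$ on $B^3$; the result is $\bigcap_{k\ge 0} M_{a,b,c}^{-k}\{\lambda_2=\lambda_4\}$, which is non-trivial iff it is two-dimensional (it cannot be three-dimensional unless the hyperplane is invariant, already excluded). This reduces to a rank condition: the $2\times 4$ matrix whose rows are the linear forms $\lambda_2-\lambda_4$ and $X_{a,b,c}(\lambda_2-\lambda_4)$ must have rank $\le 2$ with a two-dimensional common kernel, and one checks the orbit closes there. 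Running this computation with the displayed formula for $X_{a,b,c}$, the condition becomes a homogeneous polynomial system in $(a,b,c)$; solving it, and then using the $\Sigma_3\times\bR^*$-action (which acts on $\lsl(2,\bR)$ by conjugation composed with scaling) to normalize, yields exactly the three representatives $X_{1,0,0}$, $X_{0,1,3}$, $X_{0,1,-1}$. For each, the two-dimensional invariant subspace is computed directly as the common kernel, giving $\Pi_{1,0,0}=\{\lambda_2=\lambda_4=0\}$, $\Pi_{0,1,3}=\{9\lambda_1=\lambda_3,\ \lambda_2=\lambda_4\}$, $\Pi_{0,1,-1}=\{\lambda_1=\lambda_3,\ \lambda_2=\lambda_4\}$.

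It remains to identify which Lie algebras (equivalently, which $\GL(2,\bR)$-orbit labels from Theorem~\ref{thm:inv_tor_clsf}) meet each plane. For $\Pi_{1,0,0}$: a point here is $[\lambda_1:0:\lambda_3:0]$, which via the formal-product description corresponds to $\bx\cdot\by$ with $\by=y_1u_1^2+y_3u_2^2$ up to the polynomial-multiplication map of Proposition~\ref{prop:inv_tor_flat_conn}; by varying $[\lambda_1:\lambda_3]$ one realizes $\Delta>0$, $\Delta<0$, $\Delta=0$ with and without $\bx\mid\by$, so every row of Table~\ref{table:poly} occurs — using the discriminant and resultant formulas \eqref{eq:dis_res}. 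For $\Pi_{0,1,3}$: the constraints $9\lambda_1=\lambda_3$, $\lambda_2=\lambda_4$ translate, after inverting the multiplication map, into a curve on which $\Delta<0$ and $R\ne 0$ (giving $\so(3,\bC)$) except at the single point where $R=0$ and $\Delta=0$ simultaneously (the nilpotent algebra); I verify this by substituting the parameterization and examining $\det F=(4\Delta R^2)^3$. For $\Pi_{0,1,-1}$: here $\lambda_1=\lambda_3$ and $\lambda_2=\lambda_4$, so $W_1^-=\tfrac12(\lambda_3-\lambda_1)=0$ by \eqref{eq:SU3_intr_tor}, meaning every point is Hermitian, and by the last assertion of Proposition~\ref{prop:halfflat} these all lie over $[1:0:1]\in\pR^2$, i.e. they are $\SU(3)$ reductions of one fixed Hermitian structure on $\so(3,\bC)$ differing only by the choice of complex volume form (a rotation of $\gamma+i\hat\gamma$), which I confirm by tracking the $\LU(1)$-action on $C^3\oplus C^1$ from the proof of Proposition~\ref{prop:inv_tor_Einstein}.

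The main obstacle I anticipate is the bookkeeping in the second step: correctly characterizing ``non-trivial half-flat orbit'' as the existence of a two-dimensional invariant subspace inside the half-flat hyperplane (rather than invariance of the hyperplane, which never happens), solving the resulting cubic system in $(a,b,c)$, and — crucially — checking that the $\Sigma_3\times\bR^*$-normalization produces exactly these three and not more; conjugacy classes in $\lsl(2,\bR)$ under this finite-plus-scaling group need to be enumerated carefully so that no genuinely distinct generator is missed or double-counted.
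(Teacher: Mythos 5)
Your refined strategy coincides with the paper's: one defines \( \Pi_{a,b,c} \) as the intersection of the half-flat hyperplane with the kernel of \( X_{a,b,c}(\lambda_2-\lambda_4) \), which on \( \{\lambda_2=\lambda_4\} \) is exactly your linear form \( 3c\lambda_1+4a\lambda_2+(2b-c)\lambda_3 \), demands that \( X_{a,b,c} \) be tangent to this two-plane, and solves the resulting quadratic conditions in \( (a,b,c) \) up to \( \Sigma_3\times\bR^* \); the identifications for \( \Pi_{0,1,3} \) and \( \Pi_{0,1,-1} \) are also carried out essentially as you describe (except that the non-semisimple locus on \( \Pi_{0,1,3} \) consists of two points, \( [x_1:x_2]=[1:\pm\sqrt3] \), not one). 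One caveat you leave open, as the paper does only tersely, is why a non-trivial family of half-flat orbits cannot span merely a line: you rule out dimension three but not dimension one; the paper appeals to the \( \pm1,\pm3 \) weight decomposition of \( B^3 \), and in any case an orbit contained in a single line consists of mutually homothetic structures.

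The genuine gap is in your treatment of \( \Pi_{1,0,0} \). The preimage of \( \{\lambda_2=\lambda_4=0\} \) under the multiplication map \( \pR^1\times\pR^2\to\pR^3 \) has \emph{two} components, namely \( u_1\cdot(y_1u_1^2+y_3u_2^2) \) and \( (x_1u_1+x_2u_2)\cdot(x_1u_1^2-x_2u_1u_2) \), and you only use the first. In that family one has \( R=y_3 \) and \( \Delta=-4y_1y_3 \), so \( R=0 \) forces \( \Delta=0 \), and the isomorphism class characterized by \( R=0 \), \( \Delta>0 \) (that is, \( \Smod_2\setminus\mathcal C \), the algebra \( \so(3)\oplus\bR^3 \)) never occurs there; your assertion that every row of Table~\ref{table:poly} is realized by varying \( [\lambda_1:\lambda_3] \) within this single family is therefore false. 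The paper computes both components (the second has \( \Delta=x_2^2 \), \( R=2x_1x_2^2 \)) and obtains all five Lie algebras only from their union. The statement remains true, but your proof of this part would fail as written.
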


\begin{proof}
As \( B^3 \) has weights one and three the Cartan subalgebra generated by \( A_{a,b,c} \) must act with multiplicities two and four. It follows that the largest subspace of \( B^3 \) which is both contained in \( \{\lambda_2=\lambda_4\} \) and is invariant under \( X_{a,b,c} \) (if non-trivial) must be two-dimensional. This maximal subspace is contained in the two-plane where \( X_{a,b,c}(\lambda_2-\lambda_4) \) vanishes, leading to the following:
\begin{equation*}
\Pi_{a,b,c}=
\begin{cases}
3c\lambda_1 + 4a\lambda_2+(2b-c)\lambda_3=0\\ \lambda_2=\lambda_4
\end{cases}.
\end{equation*}

The condition on the vector field \( X_{a,b,c} \) to be tangent to this plane is found from the expression
\begin{multline*}
X_{a,b,c}(3c\lambda_1 + 4a\lambda_2+(2b-c)\lambda_3)=2(3b^2-c^2+2bc-12a^2)\lambda_2\\
+8a(-b+c)\lambda_3.
\end{multline*}
This gives us three situations to consider: 
\vspace{.5mm}
\begin{compactenum}
\item\label{contr:itm1} if \( c\neq0 \), we must have
\( 3b^2-c^2+2bc-12a^2=0 \) and \( a(b-c)=0 \) which implies one of the following possibilities: \( a=0 \), \( b=-c \) or \( a=0 \), \( b=c\slash 3 \) or \( b=c \), \( b=\pm\sqrt3 a \);
\item\label{contr:itm2} if \( c=0 \) one possibility is to have \( b=0\neq a\);
\item\label{contr:itm3} alternatively, for \( c=0 \), we can have \( b\neq0 \). In that case, we find that 
\( a=\pm\frac{\sqrt3}2\,b \).
\end{compactenum}
\vspace{1mm}

Considering the first of the three possibilities from \eqref{contr:itm1}, we have generating vector field
\( X_{0,1,-1} \) which means \( \lambda_1=\lambda_3 \). The corresponding plane, \( \Pi_{0,1,-1} \), then consists of half-flat structures on \( \so(3,\bC) \) that have \( W_1^\pm=0 \), and the one-parameter group generated by \( X_{0,1,-1} \) is the standard \( \LU(1) \) action which rotates the complex volume form \( \gamma+i\hat\gamma \).

For the second subcase of \eqref{contr:itm1}, the generating vector field is \( X_{0,1,3} \). On the corresponding plane \( \Pi_{0,1,3} \), we have \( 9\lambda_1=\lambda_3 \), i.e.,
 \begin{equation*}
\begin{pmatrix} x_2 & x_1 & -x_2 \\ 9x_1 & - x_2 & -x_1\end{pmatrix}
 \begin{pmatrix} y_1\\ y_2\\ y_3\end{pmatrix}=0.
\end{equation*}
Explicitly, we can write
\begin{equation*}
(x_1u_1+x_2u_2)\cdot((x_1^2+x_2^2)(u_1^2+9u_2^2)+8x_1x_2u_1u_2).
\end{equation*}
This family has \( \Delta\leqslant0 \) and contains only two points which are not semi-simple. They correspond to
\( [x_1:x_2]=[1:\sqrt3] \) or \( [x_1:x_2]=[1:-\sqrt3] \), and both give the nilpotent Lie algebra \( (0,0,0,12,13,23) \).

Up to the action of \( \Sigma_3\times\bR^* \), the vector field \( X_{1,\pm\sqrt3/2,\pm\sqrt3/2} \) appearing in the final subcase of \eqref{contr:itm1} is equivalent to \( X_{1,0,0} \) of case \eqref{contr:itm2}. Let us therefore consider the generating vector field \( X_{1,0,0} \). This has \( \lambda_2=0 \) and therefore gives rise to the following possibilities 
\begin{equation*}
 u_1\cdot(y_1u_1^2+y_3u_2^2),\quad   (x_1u_1+x_2u_2)\cdot(x_1u_1^2-x_2u_1u_2).
\end{equation*} 
The first of these points has \( \Delta=-4y_1y_3 \) and \( R=y_3 \), so that we obtain all the types of Lie algebras appearing in \( \Mmod \), except \( \SO(3)\times\bR^3 \). 
Similarly, the second point gives \( \Delta=x_2^2 \) and \( R=2x_2^2x_1 \) which leads to all the possible types of Lie algebras apart from \( \SO(3)\ltimes\bR^3 \).

The proof is concluded by noticing that the remaining case \eqref{contr:itm3}, with generating vector field \( X_{\sqrt3/2,\pm1,0} \), is equivalent to \( X_{0,1,3} \).
\end{proof}

\begin{example}
By integrating the vector field  \( X_{1,0,0} \) we obtain the one-parameter of elements of the form 
\( \diag(\lambda,\lambda^{-1}) \) with \( \lambda\in\bR^* \). 

Acting with such an element maps \( u_1\cdot(u_1^2\pm u_2^2) \) to \( u_1\cdot(\lambda^2u_1^2\pm \lambda^{-2}u_2^2) \).
In this way we obtain the following two curves
\begin{equation*}
\mathcal C_{\pm}=\left\{u_1\cdot(\lambda^2u_1^2\pm \lambda^{-2}u_2^2)\mid\,\lambda\in\bR_+\right\}.
\end{equation*}
Both of these correspond to the semi-direct product \( \so(3)\ltimes\bR^3 \) as \( \lambda\to0 \) and the nilpotent Lie algebra as \( \lambda\to+\infty \). Points in \( \lambda\in(0,+\infty) \) correspond to either \( \SO(3,\bC) \) or \( S^3\times S^3 \) depending on whether they are in \( \mathcal C_+ \) or \( \mathcal C_- \), respectively.

On the other hand,  if we let \( \diag(\lambda,\lambda^{-1}) \) act on 
\begin{equation*}
(x_1u_1+x_2u_2)\cdot(x_1u_1^2-x_2u_1u_2), 
\end{equation*}
the resulting product is 
\begin{equation*}
(\lambda x_1u_1+\lambda^{-1} x_2u_2)\cdot(\lambda x_1u_1^2-\lambda^{-1}x_2u_1u_2).
\end{equation*}
In this way, we obtain two curves that connect the nilpotent algebra and \( \so(3)\oplus\bR^3 \) through \( \so(3)\oplus \so(3) \).
\end{example}

\begin{remark}
In the next section we shall discuss another flow of half-flat structures which is also generated by the action of \( \GL(2,\bR) \).
This flow  is unrelated to the above contractions in the sense that it preserves \( \Pi_{1,0,0} \), but not the other two-planes of Proposition \ref{prop:contr_lim}. 
\end{remark}

\section{Uniqueness of the Bryant-Salamon metric}
\label{sec:half-flat-evol}

A six-manifold \( M \) with an \( \SU(3) \)-structure that is half-flat can, at least if real analytic, be embedded in a manifold \( N \) with holonomy contained in \( \G_2 \) \cite{Hitchin:Stableformsand} (see \cite{Bryant:Non-embedding} for counterexamples outside the real analytic setting). The metric on \( N \) is Ricci-flat and can be found by solving a system of evolution equations which is usually referred to as the Hitchin flow. Explicitly, we have 

\begin{equation}
\begin{cases}
\label{eq:Hitchin_flow}
\gamma'=d\sigma,\\ 
(\sigma^2)'=-2d\hat{\gamma}.
\end{cases}
\end{equation} 

As already discussed, the half-flat structures are characterized by the equations \eqref{eq:SU3_halfflat} whose symmetries consist of elements in \( \GL(6,\bR) \) that preserve \( \Span{\sigma^2} \) and \( \Span{\gamma} \). This group contains \( \SU(3) \), dilations \( \bR_+ \) and the group of order two corresponding to complex conjugation.

The algebraic variety of six-dimensional Lie algebras with a fixed \( \SU(3) \)-structure can be identified with the \( \SU(3) \) quotient of the space 
\( \Dmod \) defined in \eqref{eq:Liealgvar}. Focusing on the subvariety of half-flat structures leaves us with a quotient of
\begin{equation*} 
\Nmod  = \left\{ d\in\Dmod \mid d\gamma=0=d\sigma^2\right\}. 
\end{equation*} 

As observed in \cite[Remark 2.10]{Belgun:Ontheboundary} any subgroup of \( \SU(3) \) will preserve \eqref{eq:Hitchin_flow}. In particular, we have

\begin{lemma}
\label{lem:Hit_subgrp}
For any subgroup \( \LH \) of \( \SU(3) \), \( \mathcal{N}^{\LH} \) is invariant under the Hitchin flow. In particular, this flow leaves invariant the subset \( \Nmod^{\SO(3)} \) of the invariant torsion variety characterized by the condition that \( W_1^+=0 \).
\end{lemma}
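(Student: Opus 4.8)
The plan is to verify that the Hitchin flow \eqref{eq:Hitchin_flow} is equivariant under any isometry of $\bR^6$ preserving the relevant $\SU(3)$-structure data, and then specialize to $\LH = \SO(3)$.

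First I would observe that the right-hand sides of \eqref{eq:Hitchin_flow} are built naturally out of $\sigma$, $\gamma$, $\hat\gamma$ and the exterior derivative. Concretely, if $g\in\SU(3)$ and $d$ is a half-flat $\SU(3)$-structure (viewed as an element of $\Nmod$ via the identification of Lie algebras with Chevalley--Eilenberg operators), then $g$ fixes $\sigma$ and $\gamma$ by definition of $\SU(3)$, hence also fixes $\hat\gamma = J(\gamma)$ since $J$ is determined by $\gamma$; moreover $g$ commutes with $d$ under the right action, because $d\mapsto d\cdot g$ is precisely the action on $\Hom((\bR^6)^*,\Lambda^2(\bR^6)^*)$ and $g$ acts trivially on invariant forms. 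Therefore the map $d\mapsto(d\sigma, -2d\hat\gamma)$ is $\SU(3)$-equivariant, and the flow \eqref{eq:Hitchin_flow} — being autonomous and determined by this map together with the linear relation between a one-parameter family of $\SU(3)$-structures and its time derivative — carries $\LH$-invariant initial data to $\LH$-invariant data for the whole existence interval. This is exactly the content of \cite[Remark 2.10]{Belgun:Ontheboundary}, so the first sentence of the lemma follows immediately; one should also note that $\Nmod^{\LH}$ is a closed (indeed algebraic) subset of $\Nmod$, so the flow stays inside it.

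For the second sentence I would invoke Lemma~\ref{lemma:intrinsictorsion} and the inclusion $\SO(3)\subset\SU(3)$. An $\SO(3)$-invariant half-flat $\SU(3)$-structure is, on the nose, an $\SO(3)$-structure with invariant intrinsic torsion satisfying $\lambda_2=\lambda_4$ (the half-flat condition recorded just before Proposition~\ref{prop:halfflat}), and by \eqref{eq:SU3_intr_tor} this is the same as demanding $W_1^+ = \tfrac12(\lambda_2-\lambda_4)=0$. Hence $\Nmod^{\SO(3)}$ is precisely the locus $W_1^+=0$ inside the invariant torsion variety, and applying the first part with $\LH=\SO(3)$ shows this locus is Hitchin-flow invariant. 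The only point requiring a remark is that $\SO(3)$-invariance of the $\SU(3)$-structure is equivalent to the structure arising from the $\SO(3)$-reduction studied in Section~\ref{sec:inv_intr}; this is immediate because $\SO(3)$ acts trivially on the full set of invariant forms $(\sigma,\eta_i)$, so the averaged form $\gamma$ in \eqref{eq:SU3_3form} together with $\sigma$ is automatically $\SO(3)$-fixed, and conversely an $\SO(3)$-fixed point of $\Nmod$ lands in $B^3\oplus\partial(T^*\otimes\so(3))$.

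The main obstacle — really the only subtlety — is making precise the sense in which the Hitchin flow "preserves" a closed invariant subset: one needs that a solution curve with $\LH$-invariant initial value remains $\LH$-invariant, which follows from uniqueness of solutions once one knows that $g\cdot(-)$ maps solutions to solutions, and this in turn is the equivariance established above. Since the flow on $\Dmod$ is finite-dimensional and the $\SU(3)$-action is by linear automorphisms commuting with $d\mapsto(d\sigma,-2d\hat\gamma)$, uniqueness for ODEs closes the argument; no analytic input beyond local existence and uniqueness of \eqref{eq:Hitchin_flow} is needed.
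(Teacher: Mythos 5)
Your proposal is correct and takes essentially the same route as the paper, which disposes of the first assertion in one line by citing \cite[Remark 2.10]{Belgun:Ontheboundary}: the content of that remark is exactly your observation that \( \SU(3) \) fixes \( \sigma \), \( \gamma \) and hence \( \hat\gamma \), so the map \( d\mapsto(d\sigma,-2d\hat\gamma) \) is equivariant and ODE uniqueness forces solutions with \( \LH \)-invariant initial data to stay in \( \Nmod^{\LH} \). Your identification of \( \Nmod^{\SO(3)} \) with the locus \( \lambda_2=\lambda_4 \), i.e.\ \( W_1^+=0 \) via \eqref{eq:SU3_intr_tor}, is likewise how the paper reads the second assertion, so nothing further is needed.
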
 

\begin{remark}
It is natural to ask whether the Hitchin flow on \( \Nmod^{\SO(3)} \) extends to the whole of \( \Mmod \). The answer is (essentially) affirmative in the sense that away from the Hermitian structures on \( \so(3,\bC) \), it extends to a \( \LU(1) \)-invariant flow. This follows from the fact that, in this case, there is a uniquely associated half-flat structure (modulo an action of \( \bZ_3 \)). 
\end{remark}

\subsection{Integrating the flow}

Fixing a point \( d\in \Nmod^{\SO(3)} \) determines a Lie algebra \( \g \) together with a fixed frame \( u\colon\bR^6\to\g \). If we denote also by \( u \) the induced maps \( \Lambda^k(\bR^6)^*\to \Lambda^k\g^* \), then we have induced forms on \( \g \) given by 
\begin{equation*}
\gamma=u(\gamma_{\bR^6}), \quad \sigma=u(\sigma_{\bR^6}), 
\end{equation*}
where the subscript refers to the ``standard'' forms on \( \bR^6 \).

For each \( g \in \GL(2,\bR) \), \( gd \) obviously represents the same Lie algebra as \( \g \), but with a different frame \( ug \). Consequently, we obtain a map
\begin{equation*} 
\GL(2,\bR)\to \Lambda^3\g^*, \quad g\mapsto (ug)(\gamma_{\bR^6}) = R_g(\gamma), 
\end{equation*}
and, similarly,
\begin{equation*} 
\GL(2,\bR)\to \Lambda^2\g^*, \quad g\mapsto (ug)(\sigma_{\bR^6}) =R_g(\sigma). 
\end{equation*}
By construction, \( R_g(\sigma)  = \det(g^{-1})\sigma\).

Because we are working with differential forms, it will be more natural to use the associated left action given by 
\( L_g\rho = R_{g^{-1}}\rho \). Explicitly, we parameterize \( \GL(2,\bR) \) as
\begin{equation}
\label{eq:GL2param}
g=\begin{pmatrix} x & y \\ z & w \end{pmatrix}\in\GL(2,\bR),
\end{equation}
so that if \( u \) corresponds to the coframe \( (e^1,\dots, e^6) \), then \( ug^{-1} \) corresponds to 
\( xe^1+ye^2 \), \( ze^1+we^2 \), and so forth. In particular, we have
\begin{multline*}
L_g(\gamma) = (x^3-3xz^2) e^{135}+(xy^2-xw^2-2yzw)(e^{146}+e^{236}+e^{245})\\
+(y^3-3 yw^2)e^{246}+(x^2y-2xzw-yz^2)(e^{235}+e^{145}+e^{136}). 
\end{multline*}
Note that the left action of \( \Sigma_3 \) on \( \GL(2,\bR) \) leaves the flow invariant, because \( u(k g)^{-1} = (ug^{-1})k^{-1} \).  Similarly, the right action of \( \operatorname{Aut}(u,\g) \) on \( \GL(2,\bR) \) preserves the flow.

Recall that the intrinsic torsion of  \( (u,\g ) \) is determined by
\begin{equation}
\label{eq:halfflat_intrtor}
d\sigma =  3 \lambda_1e^{135}+ \lambda_2(e^{235}+e^{145}+e^{136})+\lambda_3(e^{146}+e^{236}+e^{245}) 
+3  \lambda_4 e^{246}.
\end{equation}
As the intrinsic torsion is an element of \( B^3\cong\bR_3[u_1,u_2] \), we may represent this by a polynomial
\begin{equation*}
p=\lambda_1u_1^3+\lambda_2u_1^2u_2+\lambda_3u_1u_2^2+\lambda_4u_2^3.
\end{equation*}

\begin{lemma}
Consider \( (u,\g) \) with intrinsic torsion \( p \) and \( g \) as in \eqref{eq:GL2param}. Then
\( (ug^{-1},\g) \) is half-flat if and only if 
\begin{equation*} 
p(y,-x) - y\D{p}{u_1}(w,-z)+x\D{p}{u_2}(w,-z)=0.
\end{equation*}

Moreover, it is Hermitian if and only if it is half-flat and 
\begin{equation}
\label{eq:Hermloc}
 p(w,-z)-w\D{p}{u_1}(y,-x)+z\D{p}{u_2}(y,-x)=0. 
\end{equation}
\end{lemma}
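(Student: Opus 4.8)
The plan is to compute the half-flat and Hermitian conditions directly from the defining equations \eqref{eq:SU3_halfflat}, exploiting the equivariance of the intrinsic torsion under the right action of \( \GL(2,\bR) \) on \( \Dmod \). Since \( d\sigma \) determines the \( 4\bR \)-component of intrinsic torsion (Theorem~\ref{thm:SO3formsdetermineIT}\eqref{thm1:itm1} and \eqref{eq:halfflat_intrtor}), and \( \sigma^2 \) is always closed (no invariant \( 5 \)-forms, cf.\ Lemma~\ref{lemma:ticonstant}), the half-flat condition for \( (u,\g) \) reduces to \( d\gamma=0 \). The key observation is that the frame \( ug^{-1} \) corresponds to the coframe \( (xe^1+ye^2,\, ze^1+we^2,\, \dots) \), so that the polynomial representing the intrinsic torsion of \( (ug^{-1},\g) \) is obtained from \( p \) by the corresponding substitution of the variables \( u_1,u_2 \). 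Concretely, I would identify \( B^1\cong\bR_1[u_1,u_2] \) with the span of \( e^1,e^2 \) via \( u_1\mapsto e^1 \), \( u_2\mapsto e^2 \), note that \( g^{-1} \) acts on this \( 2 \)-dimensional space, and track how \( d\gamma \) transforms.

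First I would recall from \eqref{eq:inv_tor_ext_der} (or equivalently \eqref{eq:SU3_3form}, \eqref{eq:halfflat_intrtor}) that the half-flat condition \( d\gamma=0 \), once \( d\sigma^2=0 \) is automatic, is equivalent to \( \lambda_2=\lambda_4 \) for the pair \( (u,\g) \). By equivariance, the half-flat condition for \( (ug^{-1},\g) \) is then \( \tilde\lambda_2=\tilde\lambda_4 \), where the \( \tilde\lambda_i \) are the coefficients of the transformed polynomial. The transformed polynomial under the action of \( g \) parameterized as in \eqref{eq:GL2param} is, up to an overall scalar, \( p \) composed with the linear substitution dual to \( g \); the pair of new linear forms being \( (x u_1 + y u_2, z u_1 + w u_2) \) or its inverse-transpose, depending on conventions. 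The condition \( \tilde\lambda_2 = \tilde\lambda_4 \) should then be read off as a bilinear expression in the entries of \( g \) and the coefficients \( \lambda_i \) of \( p \). I would organize this computation by evaluating \( p \) and its partials \( \partial p/\partial u_1 \), \( \partial p/\partial u_2 \) at the columns (or rows) of \( g \), and match the resulting expression against the claimed identity \( p(y,-x) - y\,\partial p/\partial u_1(w,-z) + x\,\partial p/\partial u_2(w,-z)=0 \); the appearance of \( (y,-x) \) and \( (w,-z) \) reflects that these are the entries of \( g^{-1} \) (up to the determinant factor, which is harmless since the condition is homogeneous).

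For the Hermitian part, the extra condition beyond half-flat is \( W_1^-=0 \), i.e.\ \( \lambda_1=\lambda_3 \) for \( (u,\g) \) by \eqref{eq:SU3_intr_tor}; again by equivariance this becomes \( \tilde\lambda_1=\tilde\lambda_3 \) for \( (ug^{-1},\g) \), which is the companion bilinear identity \eqref{eq:Hermloc}. I would note that the asymmetry between the two conditions — one uses the first column/row of \( g \), the other the second — reflects the distinct roles of the two linear forms defining \( g \), and that the two identities together are exactly \( \tilde p(u_1,u_2) = \tilde\lambda_1 u_1^3 + \tilde\lambda_2 u_1^2 u_2 + \tilde\lambda_2 u_1 u_2^2 + \tilde\lambda_1 u_2^3 \), i.e.\ the transformed polynomial lies in the \( 2 \)-dimensional space of symmetric cubics \( \Span{u_1^3+u_1u_2^2, u_1^2u_2+u_2^3} \). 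The main obstacle is purely computational bookkeeping: getting the substitution conventions right (which of \( g \), \( g^{-1} \), \( g^T \), \( (g^{-1})^T \) acts on \( B^1 \)), and confirming that the overall determinant factors — which differ between \( L_g(\sigma) = \det(g^{-1})\sigma \) and \( L_g(\gamma) \) — drop out of both homogeneous conditions. Once the conventions are pinned down, both identities follow by expanding \( p(ae_1+be_2) \) type expressions and collecting the \( u_1^2u_2 \) versus \( u_2^3 \) coefficients (respectively \( u_1^3 \) versus \( u_1u_2^2 \)) of the transformed cubic.
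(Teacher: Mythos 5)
Your proposal is correct and follows essentially the same route as the paper: the paper likewise computes the transformed coefficients \( \lambda_i(ug^{-1}) \) from \eqref{eq:halfflat_intrtor} (equivalently, the substitution action of \( g \) on the cubic \( p \)) and then imposes \( \lambda_2(ug^{-1})=\lambda_4(ug^{-1}) \) for half-flat and additionally \( \lambda_1(ug^{-1})=\lambda_3(ug^{-1}) \) for Hermitian, matching the result against \( p(y,-x) \), \( p(w,-z) \) and the partials. The only work you defer --- pinning down the substitution convention and checking that determinant factors drop out of the homogeneous conditions --- is exactly the explicit computation the paper records.
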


\begin{proof}
The intrinsic torsion of \( ug^{-1} \) is computed by using \eqref {eq:halfflat_intrtor}. We find
\begin{equation}
 \label{eqn:lambdafromg}
\begin{gathered}
 \lambda_1(ug^{-1})= -\lambda_4z^3+\lambda_3z^2w- \lambda_2zw^2+\lambda_1w^3,\\
 \lambda_2(ug^{-1})= 3\lambda_4xz^2-2\lambda_3xzw + \lambda_2xw^2- \lambda_3yz^2 +2 \lambda_2yzw-3\lambda_1yw^2,\\
 \lambda_3(ug^{-1})= -3 \lambda_4x^2z+ \lambda_3 x^2w+2 \lambda_3xyz-2\lambda_2xyw-\lambda_2y^2z+3 \lambda_1y^2w,\\
\lambda_4(ug^{-1})= \lambda_4x^3- \lambda_3 x^2y+ \lambda_2xy^2-\lambda_1y^3.
\end{gathered}
\end{equation}

The half-flat condition, \( \lambda_2(ug^{-1})=\lambda_4(ug^{-1}) \), therefore reads
\begin{multline*}
0= \lambda_1(y^3-3yw^2)-\lambda_2(xy^2-xw^2-2yzw)\\+\lambda_3(x^2y-2xzw-yz^2)-\lambda_4(x^3-3xz^2)\\
 = p(y,-x) - y\D{p}{u_1}(w,-z)+x\D{p}{u_2}(w,-z),
  \end{multline*}
as claimed.

In our setting, Hermitian structures are half-flat and, additionally, satisfy the condition \( \lambda_1(ug^{-1})=\lambda_3(ug^{-1}) \). Similar computations to the above then verify the expression 
\eqref{eq:Hermloc}. 
\end{proof}

We can use the discriminant of a third degree polynomial 
\begin{equation*}
\begin{gathered}
q=q_1u_1^3+q_2u_1^2u_2+q_3u_1u_2^2+q_4u_2^3,\\
\Delta(q)=q_2^2 q_3^2 - 4 q_1 q_3^3 - 4 q_2^3 q_4 + 18 q_1 q_2 q_3 q_4 - 27 q_1^2 q_4^2,
\end{gathered}
\end{equation*}
so as to describe the Hitchin flow as a linear flow on the space of polynomials; the discriminant then determines the velocity.
Explicitly, we define the map \( Q\colon \gl(2,\bR)\to \bR_3[u_1,u_2]\) by
\begin{equation*} 
\begin{pmatrix} x & y \\ z & w \end{pmatrix} \mapsto \tfrac13(xu_1+yu_2)^3 - (xu_1+ yu_2)(zu_1+wu_2)^2,
\end{equation*}
whose image is the space of polynomials with either three distinct roots or one triple root. 

Indeed, if \( xu_1+yu_2 \) and \( zu_1+wu_2 \) are linearly dependent, the image has a triple root. If they are independent, then the image of \( Q \) has three distinct roots. Conversely, any polynomial with three distinct roots can be written as  the product of three linear factors \( f_1f_2f_3 \) such that \( f_1+f_2+f_3=0 \) and the equations
\begin{equation*}
(\tfrac34)^{1/3}f_1=xu_1+yu_2, \quad (48)^{-1/6}(f_2-f_3)=zu_1+wu_2 
\end{equation*}
determine a unique matrix, up to the \( \Sigma_3 \) action that permutes the linear factors. It follows that the restriction of \( Q \) to the set of invertible matrices defines a \( 3 \)-fold covering map
\begin{equation}
\label{eqn:Qcovering}
Q\colon \GL(2,\bR)\to \{q\in\bR_3[u_1,u_2]\mid \Delta(q)>0\}.
\end{equation}
This map is equivariant with respect to the right action of \( \GL(2,\bR) \), and the subgroup \( \Sigma_3 \) generated by \eqref{eq:sigma3ingl2} and acting on the left is its group of automorphisms.

\begin{lemma}
\label{lemma:hitchinpoly}
Let \( (ug^{-1}(t),\g) \) be a one-parameter family of half-flat structures that satisfy Hitchin's evolution equations and let \( p \) be the intrinsic torsion of \( (u,\g) \). If we set
\( q(t)=Q(g(t)) \) then
\begin{equation*}
q'(t)=\det(g) p,
\end{equation*}
where \( (\det g(t))^6=\tfrac34 \Delta(q(t)) \). In particular, \( q(t) \) describes a line interval of the form \( (q+s_- p,q+s_+p) \). If \( s_\pm \) is finite, \( \Delta(q+s_\pm p)=0 \).

In addition, if the line interval does not contain any Hermitian structure, then \( \Delta(q(t)) \) is strictly monotonic.
\end{lemma}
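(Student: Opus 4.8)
The whole proof rests on a single bookkeeping identity: the map \( Q\colon\gl(2,\bR)\to\bR_3[u_1,u_2] \) and the intrinsic‑torsion map of \eqref{eq:halfflat_intrtor} are literally the same linear map. Write \( \Psi\colon\bR_3[u_1,u_2]\to\Lambda^3\g^* \) for the injective linear map \( q_1u_1^3+q_2u_1^2u_2+q_3u_1u_2^2+q_4u_2^3\mapsto 3q_1e^{135}+q_2(e^{235}+e^{145}+e^{136})+q_3(e^{146}+e^{236}+e^{245})+3q_4e^{246} \), so that \eqref{eq:halfflat_intrtor} says exactly \( d\sigma=\Psi(p) \) for \( (u,\g) \). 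First I would expand \( Q(g)=\tfrac13(xu_1+yu_2)^3-(xu_1+yu_2)(zu_1+wu_2)^2 \) and compare with the formula for \( L_g(\gamma) \) displayed before the lemma; they agree, i.e. \( L_g(\gamma)=\Psi(Q(g)) \). I also record \( L_g(\sigma)=R_{g^{-1}}(\sigma)=\det(g)\,\sigma \), which follows from \( R_g(\sigma)=\det(g^{-1})\sigma \).

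Granting this, part~1 is two lines: the structure at time \( t \) has \( \gamma \)-form \( L_{g(t)}(\gamma)=\Psi(q(t)) \) and \( \sigma \)-form \( \det(g(t))\,\sigma \), so Hitchin's first equation \( \gamma'=d\sigma \) reads \( \Psi(q'(t))=d(\det(g(t))\sigma)=\det(g(t))\,d\sigma=\Psi(\det(g(t))p) \), and injectivity of \( \Psi \) gives \( q'(t)=\det(g(t))p \). The relation \( (\det g)^6=\tfrac34\Delta(Q(g)) \) follows by checking it for \( g=\diag(a,b) \) (where \( \Delta(\tfrac13a^3u_1^3-ab^2u_1u_2^2)=\tfrac43a^6b^6 \)) and invoking equivariance of \( Q \) together with the weight‑\( 6 \) transformation law of the discriminant of a binary cubic. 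For part~3, \( \det g(t) \) is continuous and nowhere zero, hence of constant sign, so \( s(t)=\int\det g \) is a strictly monotone reparametrisation and \( q(t)=q+s(t)p \) sweeps out the open line interval \( (q+s_-p,q+s_+p) \).

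For part~4 I would assume the family is maximal. By part~2, \( \Delta(q(t))=\tfrac43(\det g(t))^6>0 \), so \( q(t) \) stays in \( \{\Delta>0\} \), the region over which \( Q \) restricts to the covering map \eqref{eqn:Qcovering}. If \( s_+<\infty \) then \( q(t)\to q_+:=q+s_+p \); if one had \( \Delta(q_+)>0 \), then \( q_+ \) would lie in the base of this covering, so the path \( g(t) \) would converge in \( \GL(2,\bR) \) to a point of \( Q^{-1}(q_+) \) (a convergent path in the base of a covering lifts to a convergent path), and hence \( (ug^{-1}(t),\g) \) would converge to a half‑flat \( \SU(3) \)-structure on \( \g \); since the Hitchin flow is a smooth ODE on the finite‑dimensional space of invariant half‑flat structures on \( \g \), existence and uniqueness would then extend the solution past the right endpoint, contradicting maximality. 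Therefore \( \Delta(q_+)=0 \), and likewise at \( s_- \).

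Part~5 uses the \emph{second} Hitchin equation. For the (invariant‑torsion) structure at time \( t \), equations \eqref{eq:inv_tor_ext_der} and \eqref{eq:SU3_intr_tor} give \( d\hat\gamma(t)=W_1^-(t)\,\sigma(t)^2 \), with \( W_1^-(t) \) vanishing precisely when the structure is Hermitian, while \( \sigma(t)^2=\det(g(t))^2\sigma^2 \). Comparing \( (\sigma^2)'(t)=-2d\hat\gamma(t) \) with \( (\sigma^2)'(t)=2\det(g(t))(\det g)'(t)\,\sigma^2 \) yields \( (\det g)'(t)=-W_1^-(t)\det(g(t)) \), which (as \( \det g(t)\neq0 \)) vanishes exactly at the Hermitian times. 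Differentiating \( \Delta(q(t))=\tfrac34^{-1}\cdot\tfrac43(\det g(t))^6 \), that is \( \Delta(q(t))=\tfrac43(\det g(t))^6 \), gives \( \tfrac{d}{dt}\Delta(q(t))=8(\det g(t))^5(\det g)'(t) \), so this derivative vanishes exactly at the Hermitian points of the line interval; if there are none it is continuous and nowhere zero on a connected interval, hence of constant sign, so \( \Delta(q(t)) \) is strictly monotone. The step needing the most care is part~4: turning the heuristic ``the flow can only break down on \( \{\Delta=0\} \)'' into an actual argument via the covering property of \( Q \) and ODE existence and uniqueness; everything else follows from the identity \( L_g(\gamma)=\Psi(Q(g)) \) and the two Hitchin equations.
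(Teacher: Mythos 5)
Your proof is correct and follows essentially the same route as the paper: the identification of \( Q(g) \) with the coefficients of \( L_g\gamma \), the first Hitchin equation for \( q'=\det(g)\,p \), equivariance of the discriminant for \( (\det g)^6=\tfrac34\Delta(Q(g)) \), and the second Hitchin equation for the monotonicity of \( \Delta \) away from the Hermitian locus. Your covering-space-plus-ODE argument for \( \Delta(q+s_\pm p)=0 \) merely expands the paper's terse appeal to the surjectivity of \eqref{eqn:Qcovering}, and you are right that the maximality hypothesis, implicit in the paper's statement, is needed there.
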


\begin{proof}
A solution, \( u(t)=ug^{-1}(t) \), to the evolution equations \eqref{eq:Hitchin_flow} satisfies
\begin{equation*}
\begin{gathered}
(L_{g(t)}\gamma)' = d (L_{g(t)}\sigma ) = \det(g(t))d\sigma,\\
L_{g(t)}\det(g(t))'\sigma^2=-d(L_{g(t)}(\hat\gamma)).  
\end{gathered}
\end{equation*}
The second equation can be rewritten in the form
\begin{equation*}
(\det g(t))^2\det(g(t))' \sigma^2=(\lambda_1(t)-\lambda_3(t))\sigma^2\slash2,
\end{equation*}
which leads to
\begin{equation*}
\tfrac23((\det g)^3)'=p(w,-z)-w\D{p}{u_1}(y,-x)+z\D{p}{u_2}(y,-x);
 \end{equation*}
by \eqref{eq:Hermloc} the right-hand side vanishes if and only if the structure is Hermitian.

Explicit computations show that the coefficients of \( q(s) \) represent the coefficients of \( L_g\gamma \), with respect to a suitable basis, 
\begin{multline*}
L_g\gamma =  (x^3-3xz^2)e^{135} +  (x^2y-2xzw -yz^2){(e^{235}+e^{145}+e^{136})}\\
+ (xy^2-xw^2-2zyw)(e^{146}+e^{236}+e^{245})+ (y^3-3yw^2)e^{246},
\end{multline*}
and that the discriminant is related to \( \det g \) as stated.

The fact that \( q \) evolves inside the affine line,
\begin{equation*}
\{q+s p \mid  \Delta(q+s p)\geqslant 0\},
\end{equation*}
follows from the equations. In fact, it ranges in a connected component of this space (hence a line interval), by surjectivity of \eqref{eqn:Qcovering}.

Clearly, the second evolution equation shows that the discriminant is either strictly increasing or strictly decreasing away from the Hermitian locus \eqref{eq:Hermloc}.
\end{proof}

\begin{remark}[Classifying according to symmetry]
\label{remark:changing}
Suppose that \( (ug^{-1}(t),\g) \) is a solution to the Hitchin flow and \( (u,\g) \) has intrinsic torsion \( p \). For any \( k\in\GL(2,\bR) \) the same solution can be written as \( (uk^{-1} (gk^{-1})^{-1}(t),\g) \), which means the reference Lie algebra is changed from \( (u,\g) \) to \( (uk^{-1},\g) \) and the integral line in \( B^3 \) from \( q+sp \) to \( k q + sk p \). Provided \( k p=p \), this is also an integral line relative to \( (u,\g) \). In particular, integral lines on \( (u,\g) \) come in families determined by the stabilizer of \( p \) in \( \GL(2,\bR) \). 

Since \( p \) has three distinct linear factors, its stabilizer is the \( \Sigma_3 \) that permutes them; we just identified this with the group of automorphisms of the map \( Q \), or, more explicitly, the \( \Sigma_3 \) of \eqref{eq:sigma3ingl2} acting on \( \GL(2,\bR) \) on the left. The action on \( B^3 \), however, comes from right multiplication on \( \GL(2,\bR) \), and we therefore obtain a \( \Sigma_3 \) which is only conjugated to our ``standard'' one.

One can then categorize integral lines \( \{ q+sp \} \) according to their stabilizer in \( \Sigma_3 \), which depends only on the action on \( q \). In this terminology, the nearly-K\"ahler metric is characterized by \( \Sigma_3 \)-invariance, and the Bryant-Salamon metric is \( \Sigma_2 \)-invariant. 
\end{remark}

\begin{remark}
In  \cite{Hitchin:Stableformsand} Hitchin interpreted the flow \eqref{eq:Hitchin_flow} of initial data \( (\sigma_0,\gamma_0) \) as a Hamiltonian flow on the product of cohomology classes \( [\gamma_0]\times[\frac12\sigma_0^2] \). By restricting this space to \( \SO(3) \)-invariant forms, we obtain the product of two affine lines parameterized by
\begin{equation*} 
\gamma=\gamma_0+a_1d\sigma, \quad \sigma^2/2=(1+a_2)\sigma_0^2\slash2. 
\end{equation*}
When fixing \( -\sigma_0^3\slash2 \) as a reference volume form, the canonical symplectic structure on  \( [\gamma_0]\times[\frac12\sigma_0^2] \) restricts to
\begin{equation*} 
\langle (a_1',a_2'),(b_1',b_2')\rangle = a_2'b_1'-a_1'b_2'. 
\end{equation*}
The Hamiltonian for the Hitchin flow is then given by 
\begin{equation*}
\mathcal{H}= \mathcal{V}(\gamma)-2\mathcal{V}(\sigma^2/2), 
\end{equation*}
where \( \mathcal{V} \) denotes the volume associated to a stable form as in \cite{Hitchin:Stableformsand}. Explicitly these volumes are \( \mathcal{V}(\sigma^2/2)=(1+a_2)^{3/2} \) and
\begin{multline*}
\mathcal{V}(\gamma)^2=4+12  a_1 (\lambda_1-\lambda_3)-12  a_1^2(3  \lambda_1 \lambda_3+2 \lambda_2^2-\lambda_3^2)\\
-4 a_1^3(27  \lambda_1 \lambda_2^2-9  \lambda_1 \lambda_3^2-3  \lambda_2^2 \lambda_3+\lambda_3^3)\\
-3a_1^4(27  \lambda_1^{2} \lambda_2^2-18  \lambda_1 \lambda_2^2 \lambda_3+4  \lambda_1 \lambda_3^3+4 \lambda_2^4- \lambda_2^2 \lambda_3^2),
\end{multline*}
where we have imposed the half-flat condition, meaning \( \lambda_2=\lambda_4 \). 

The associated Hamiltonian equations now read
\begin{equation*} 
a_1'=\D{\mathcal{H}}{a_2}=\sqrt{1+a_2}, \quad a_2'=-\D{\mathcal{H}}{a_1}, 
\end{equation*}
and in the notation of Lemma~\ref{lemma:hitchinpoly} we have \( \det g = \sqrt{1+a_2} \).  This means the ``velocity'' determined by the discriminant of \( q\in \bR_3[u_1,u_2] \) is, in fact, the time derivative of the position variable in the phase space where \( \gamma \) represents position and \( \sigma^2\slash2 \) conjugate momentum.
\end{remark}

\begin{remark}
Changing the variable to \( s \) such  that \( ds\slash dt=\det g(s) \), we can write 
\begin{equation*} 
q(s)=q(0)+sp, \quad  t=\int_0^s \frac{ds}{\det g(s)}. 
\end{equation*}
We may then assume that \( g(0)=1 \) so that \( q(0)=\tfrac13u_1^3 - u_1u_2^2 \). In some cases, however, it is better to take \( g(0) \) arbitrary and instead fix a representative for \( p \) in each \( \GL(2,\bR) \)-orbit.
\end{remark}

Let \( (ug^{-1}(t),\g) \) be a maximal solution to the Hitchin flow with \( g\colon (a,b)\to\GL(2,\bR) \). We say that \( g(t) \) is defined on \( [a,b) \) if the limit \( \lim_{t\downarrow
 a} g(t) \) exists,  similarly for \( (a,b] \).

\begin{lemma}
\label{lemma:endpoint}
Let \( q(t) \) be a maximal solution to the Hitchin flow on \( \bR_3[u_1,u_2] \) defined on an interval \( [0,t_+) \) or \( (t_-,0] \). Then either \( q(0)=0 \), or \( q(0)=f^3 \) with \( f \) a linear divisor of \( p \). More precisely, up to \( \GL(2,\bR) \)-symmetry, the cases are:
\vspace{.5mm}
\begin{compactenum}
\item\label{itm1:endpoint}  \( p=u_1(u_1^2+u_2^2) \) and \( q=\lambda u_1^3 \) with \( \lambda\neq0 \);
\item\label{itm2:endpoint}  \( p=u_1u_2^2 \) and  \( q=\lambda u_1^3 \) with \( \lambda\neq0 \);
\item\label{itm3:endpoint}  \( p=u_1(u_1^2-u_2^2) \) and \( q \) has the form 
 \( \lambda u_1^3 \), \( \lambda (u_1+u_2)^3 \) or \( \lambda (u_1-u_2)^3 \) for some \( \lambda\in\bR \).
\end{compactenum}
\end{lemma}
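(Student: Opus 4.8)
\emph{Plan of proof.} Everything can be read off from the polynomial picture of Lemma~\ref{lemma:hitchinpoly}: along the solution, \( q(t)=Q(g(t)) \) moves on the affine line \( q(0)+\bR p \), and \( q(0) \) is a limit of points in the image of \( Q \). I treat the case of an interval \( [0,t_+) \); the case \( (t_-,0] \) is entirely analogous.

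First I would show that \( q(0)=\lambda f^{3} \) for some linear form \( f \) and some \( \lambda\in\bR \) (possibly \( \lambda=0 \), i.e.\ \( q(0)=0 \)). By hypothesis the limit \( g(0)=\lim_{t\downarrow 0}g(t)\in\gl(2,\bR) \) exists, so by continuity of \( Q \) we have \( q(0)=Q(g(0)) \). If \( \det g(0)\neq0 \), then \( g(0)\in\GL(2,\bR) \) and, by the covering property \eqref{eqn:Qcovering}, \( q(0) \) lies in the open set \( \{\Delta>0\} \); but then \( q(0)+sp \) stays in \( \{\Delta>0\} \) for \( s \) in a two-sided neighbourhood of \( 0 \), so the solution extends to negative times, contradicting maximality. (Here \( p\neq0 \), since otherwise the solution is constant and hence defined on all of \( \bR \).) Thus \( \det g(0)=0 \), whence \( \Delta(q(0))=0 \) because \( \det g(0)^{6}=\tfrac34\Delta(q(0)) \); since the image of \( Q \) consists of the cubics with three distinct roots together with those of the form \( \lambda f^{3} \), the claim follows.

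Next, assuming \( \lambda\neq0 \), I would pin down \( f \) relative to \( p \). Using the \( \GL(2,\bR) \)-symmetry, which acts simultaneously on \( q \) and on \( p \), we may take \( f=u_1 \), so \( q(0)=\lambda u_1^{3} \); write \( p=p_1u_1^{3}+p_2u_1^{2}u_2+p_3u_1u_2^{2}+p_4u_2^{3} \). A direct computation with the explicit discriminant shows that \( \Delta(q(0)+sp) \) has vanishing terms of order \( 0 \) and \( 1 \) in \( s \), with
\begin{equation*}
\Delta(q(0)+sp)=-27\lambda^{2}p_4^{2}\,s^{2}+O(s^{3}).
\end{equation*}
Since \( \Delta(q(0)+sp) \) must be positive for \( s \) in a one-sided neighbourhood of \( 0 \), the nonpositive \( s^{2} \)-coefficient must vanish, so \( p_4=0 \), i.e.\ \( u_1\mid p \). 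With \( p_4=0 \) the same computation gives \( \Delta(q(0)+sp)=-4\lambda p_3^{3}\,s^{3}+O(s^{4}) \): if \( p_3\neq0 \) this is positive on exactly one side of \( s=0 \) — consistent with the interval \( [0,t_+) \) (or \( (t_-,0] \)) — whereas \( p_3=0 \) forces \( u_1^{2}\mid q(0)+sp \) and hence \( \Delta\equiv0 \) on the whole line, so no structure on it is nondegenerate and there is no solution. Therefore \( u_1\mid p \) and \( u_1^{2}\nmid p \); that is, \( f \) is a \emph{simple} linear factor of \( p \). This discriminant expansion is the only genuinely computational point, and it needs a little care because \( \Delta \) is strongly degenerate at \( \lambda f^{3} \) (it vanishes identically along the \( u_1^{3} \)- and \( u_1^{2}u_2 \)-directions, and to orders \( 2 \) and \( 3 \) along the \( u_2^{3} \)- and \( u_1u_2^{2} \)-directions); but it is elementary given the explicit quartic formula for \( \Delta \).

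Finally I would run the case analysis. A cubic with a simple linear factor is nonzero and not a perfect cube, so by Lemma~\ref{lem:5models} its \( \GL(2,\bR) \)-orbit is that of \( u_1(u_1^{2}-u_2^{2}) \), of \( u_1u_2^{2} \), or of \( u_1(u_1^{2}+u_2^{2}) \). In the first case \( p \) has three distinct linear factors \( u_1,u_1-u_2,u_1+u_2 \), each a possible value of \( f \), which gives the three forms of \( q(0) \) in \eqref{itm3:endpoint}; in the second and third cases \( u_1 \) is the unique simple real factor, yielding \eqref{itm2:endpoint} and \eqref{itm1:endpoint} respectively. If instead \( \lambda=0 \), i.e.\ \( q(0)=0 \), then \( q(t)\in\bR p \) and \( \Delta(sp)=s^{4}\Delta(p) \) must be positive, so \( \Delta(p)>0 \) and \( p \) has three distinct real roots — this is \eqref{itm3:endpoint} with \( \lambda=0 \). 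The perfect-cube orbit \( p=f^{3} \) (the nilpotent algebra) cannot occur, since it has no simple linear factor (ruling out \( \lambda\neq0 \)) and \( \Delta(f^{3})=0 \) (ruling out \( \lambda=0 \)). This exhausts the possibilities.
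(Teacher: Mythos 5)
Your proof is correct, and its overall skeleton (maximality forces \( \Delta(q(0))=0 \), hence \( q(0)=\lambda f^3 \); then constrain \( f \) against \( p \); then run through the \( \GL(2,\bR) \)-orbits of \( p \)) matches the paper's, but the middle step is handled by a genuinely different mechanism. The paper uses the half-flat condition at the degenerate limit: writing \( q(0)=Q(g(0)) \) with the rows of \( g(0) \) proportional, \( zu_1+wu_2=h(xu_1+yu_2) \), the half-flat identity collapses via Euler's relation to \( (1-3h^2)\,p(y,-x)=0 \), so either \( xu_1+yu_2 \) divides \( p \) or \( h=\pm 1/\sqrt3 \) and \( q(0)=0 \); the remaining bad pairs (such as \( p=u_1^3 \), or \( p=u_1u_2^2 \) with \( q=\lambda u_2^3 \), or \( p=u_1(u_1^2+u_2^2) \) with \( q=0 \)) are then excluded one by one because the line \( \{q+sp\} \) meets no polynomial of positive discriminant. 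You instead never invoke the half-flat identity at the boundary and extract everything from the Taylor expansion of \( \Delta(\lambda u_1^3+sp) \) at \( s=0 \): the coefficients \( -27\lambda^2p_4^2 \) and \( -4\lambda p_3^3 \) of \( s^2 \) and \( s^3 \) (which I checked against the explicit quartic formula for \( \Delta \)) force \( p_4=0 \) and \( p_3\neq0 \), i.e.\ \( u_1\mid p \) and \( u_1^2\nmid p \). This buys you the sharper statement that \( f \) is a \emph{simple} linear factor of \( p \) in one stroke, so the final orbit analysis becomes a direct read-off rather than an enumeration of excluded pairs; the price is a slightly more delicate computation at a point where \( \Delta \) is highly degenerate, which you handle correctly. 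Two small points of hygiene: Lemma~\ref{lem:5models} classifies formal products in \( \pR^1\times\pR^2 \), not binary cubics, so for the orbit list you should appeal instead to the elementary classification of cubics by root multiplicities (the ``four orbits'' the paper cites); and the image of \( Q \) on invertible matrices consists of cubics with three distinct \emph{real} roots, which is what justifies \( \Delta>0 \) there.
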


\begin{proof}
Let us write \( q=q(0)=Q(\begin{smallmatrix}x&y\\z&w\end{smallmatrix}) \). By maximality, \( q \) has discriminant zero. If \( q \) is zero the first claim holds trivially. Otherwise, we have 
\begin{equation*} 
zu_1+wu_2=h(xu_1+yu_2), \quad h\in\bR, 
\end{equation*}
and the half-flat condition then gives
\begin{equation*} 
0=p(y,-x)-h^2\left(y\D{p}{u_1}(y,-x) - x\D{p}{u_2}(y,-x)\right)=p(y,-x)-3h^2p(y,-x). 
\end{equation*}
So if \( xu_1+yu_2 \) does not divide \( p \), we must have \( h=\pm\tfrac1{\sqrt3} \) which implies \( q=0 \). This proves the first part of the statement.

There are four  orbits for the action of \( \GL(2,\bR) \) on \( \bR_3[u_1,u_2] \), hence four cases to consider. However, the situations when
\begin{equation*}
\begin{gathered}
p=u_1^3, q=\lambda u_1^3,\quad  p=u_1u_2^2, q=\lambda u_2^3 \quad \textrm{and}\quad p=u_1(u_1^2+u_2^2), q=0
\end{gathered}
\end{equation*}
can be ruled out because, in each case, the affine line \( \{q+sp\} \) does not contain polynomials with positive discriminant.
In this way we arrive at the possibilities \eqref{itm1:endpoint}--\eqref{itm3:endpoint}. 
\end{proof}

\begin{example}
In general, the affine line \( \{ q+s p \} \) may contain more than one polynomial with discriminant zero, even if only one point in the line can be the boundary point for a solution to the Hitchin flow.  
In fact, this flow always extends continuously to the boundary inside \( \bR_3[u_1,u_2] \), because it occurs inside an affine line. This, however, does not imply it extends on \( \GL(2,\bR) \).

Consider, for instance, the polynomial 
\( q(t)=u_1^3(t+6)-tu_1 u_2^2 \). For \( t>0 \), we can write this as 
\begin{equation*} 
u_1(u_1^2(t+6)-tu_2^2) = u_1(\sqrt{t+6}\,u_1+\sqrt t \,u_2)(\sqrt{t+6}\,u_1-\sqrt t\, u_2), 
\end{equation*}
leading to 
\begin{equation*}
g=\diag((3(t+6))^{1/3},-(3(t+6))^{-1/6}), \quad \det g = 3(t+6)^{1/6}\sqrt t.
\end{equation*}

On the other hand, when \( t<-6 \), we have 
\begin{equation*} 
u_1(u_1^2(t+6)-tu_2^2) = -u_1(\sqrt{-t-6}\,u_1+\sqrt {-t} \,u_2)(\sqrt{-t-6}\,u_1-\sqrt {-t}\, u_2) 
\end{equation*}
so that 
\begin{equation*}
g=\diag( (3(t+6))^{1/3},(3(-t-6))^{-1/6}), \quad \det g = 3(-t-6)^{1/6}\sqrt{-t}; 
\end{equation*}
this goes to infinity at the boundary point. 

In conclusion, we have a one-parameter family of polynomials defined for \( t\in(-\infty,-6]\cup [0,+\infty) \), but in terms of \( \GL(2,\bR) \) the family is only well-defined on \( (-\infty,-6)\cup [0,+\infty) \).
\end{example}

\subsection{The classification}
\label{subsection:cohom1}
Let \( \G \) be a Lie group with a half-flat \( \SO(3) \)-structure that has invariant intrinsic torsion. A maximal solution of the Hitchin flow determines a cohomogeneity one \( \G_2 \)-metric on a product \( \G\times(t_-,t_+) \). We want to determine necessary conditions for \( \G\times (t_-,t_+) \) to embed in a complete cohomogeneity one manifold. 

The possibility that \( (t_-,t_+) \) is the whole real line can be ruled out by the following lemma; it allows us to deduce there are no non-flat complete solutions to the Hitchin flow which are defined on \( \G\times\bR \).

\begin{lemma}
\label{lem:alt_CG}
Let \( p\in\bR_3[u_1,u_2] \) be a non-zero polynomial satisfying the half-flat condition, 
meaning \( p=\lambda_1 u_1^3 + \lambda_2(u_1^2u_2+u_2^3)+\lambda_3 u_1u_2^2 \). Then the affine line
\begin{equation}
\label{eqn:affineline}
\left\{\tfrac13u_1^3-u_1u_2^2+sp\mid s\in\bR\right\}
\end{equation}
contains at least one polynomial with a root of multiplicity greater than one, equivalently with vanishing discriminant.
\end{lemma}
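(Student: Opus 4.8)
The plan is to show that the discriminant $\Delta$ of the cubic $q+sp$, viewed as a function of $s\in\bR$, must vanish somewhere. Write $q_0=\tfrac13u_1^3-u_1u_2^2$ and note that $q_0$ itself has discriminant zero is false — in fact $q_0$ has three distinct real roots (its roots come from $u_1=0$ and $u_1^2=3u_2^2$), so $\Delta(q_0)>0$. Thus $s=0$ lies in the open locus where the discriminant is positive, and we want to produce a value of $s$ at which $\Delta(q_0+sp)$ hits zero. Since $\Delta$ restricted to the line is a polynomial in $s$ of degree at most $4$ (as $\Delta$ is homogeneous of degree $4$ in the four coefficients), it suffices to show this quartic is not everywhere positive; by the intermediate value theorem it then has a real root and we are done. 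Equivalently, I would argue by contradiction: if $\Delta(q_0+sp)>0$ for all $s\in\bR$, then every member of the line is a cubic with three distinct real roots.

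The key step is to analyse the behaviour as $s\to\pm\infty$. For large $|s|$ the cubic $q_0+sp$ is dominated by $sp$, so its roots converge to the roots of $p$ (after rescaling). The polynomial $p=\lambda_1 u_1^3+\lambda_2(u_1^2u_2+u_2^3)+\lambda_3u_1u_2^2$ is the half-flat intrinsic torsion; by Proposition 3.22 (the five-models classification) $p$ lies, up to $\GL(2,\bR)$, in one of the orbits $u_1(u_1^2\pm u_2^2)$, $u_1u_2^2$, $u_1^2u_2$, $u_1(u_1^2\pm u_2^2)$. Here I must be careful: $\GL(2,\bR)$ does not preserve the particular line \eqref{eqn:affineline} since $q_0$ is fixed, so I cannot simply normalise $p$. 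Instead I split into cases according to the sign of $\Delta(p)$ (which \emph{is} a $\GL(2,\bR)$-invariant up to positive scalar): if $\Delta(p)<0$, then $p$ has a non-real root, hence for $|s|$ large $q_0+sp$ also has a non-real root, so $\Delta(q_0+sp)<0$ there, giving the contradiction immediately. If $\Delta(p)=0$, then $p$ has a repeated root, and a perturbation argument shows that near $s=\infty$ the quartic $\Delta(q_0+sp)$ in $s$ has a zero of the leading term, so one checks the subleading coefficient: the leading coefficient of $\Delta(q_0+sp)$ as a polynomial in $s$ is $\Delta(p)=0$, so the quartic has degree $\le 3$, hence is odd-degree or has a sign change, forcing a real root. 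The genuinely delicate case is $\Delta(p)>0$, where $p$ itself has three distinct real roots and the naive asymptotics do not produce a sign change at either end.

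For the case $\Delta(p)>0$, the approach is to exploit the half-flat restriction on $p$ more sharply. The condition $\lambda_2=\lambda_4$ means $p(0,1)=\lambda_4=\lambda_2$ and also relates $p$ to $\partial p/\partial u_i$ along $u_1=0$; concretely, $p$ with $\lambda_4=\lambda_2$ always has $u_2\mid p(u_1,u_2)-\lambda_1 u_1^3$ in a constrained way, and one computes that $p(1,0)=\lambda_1$ while the coefficient pattern $(\lambda_1,\lambda_2,\lambda_3,\lambda_2)$ forces $p$ to share structure with $q_0=(\tfrac13,0,-1,0)$. The cleanest route is to evaluate $\Delta(q_0+sp)$ explicitly as a quartic $c_4 s^4+c_3s^3+c_2s^2+c_1s+c_0$ using the discriminant formula quoted just before the lemma, substitute the half-flat form of $p$, and show that $c_4=\Delta(p)$ and that one of the low-order coefficients — I expect $c_1$, the coefficient linear in $s$, which is essentially $\langle \nabla\Delta(q_0),p\rangle$ — forces a sign change; alternatively show $c_0=\Delta(q_0)>0$ together with $c_4=\Delta(p)>0$ and that $\Delta$ must dip below zero in between by checking $\Delta$ at a convenient interior value of $s$ (for instance the $s$ making two roots of $q_0+sp$ collide, which exists because the roots move continuously and the root configurations at $s=0$ and $s=\infty$ are linked by a monodromy that must pass through a collision). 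The main obstacle is precisely handling this last case uniformly in the half-flat parameters $(\lambda_1,\lambda_2,\lambda_3)$ without an exhaustive sub-case split; I would try to package it as the single statement that the quartic $\Delta(q_0+sp)$ has leading coefficient $\Delta(p)$ and constant term $\Delta(q_0)$ of the same sign but, under the half-flat constraint, necessarily has a negative value at some intermediate $s$, established by a short resultant or discriminant-of-the-quartic computation.
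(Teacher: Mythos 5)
Your proposal correctly reduces the problem to showing that the quartic $s\mapsto\Delta(q_0+sp)$, with $q_0=\tfrac13u_1^3-u_1u_2^2$, leading coefficient $\Delta(p)$ and constant term $\Delta(q_0)=\tfrac43>0$, cannot be everywhere positive, and it disposes of the case $\Delta(p)<0$ cleanly. But the remaining cases are not actually proved, and they are where the content of the lemma lies. In the case $\Delta(p)=0$ your step ``the quartic has degree $\le 3$, hence is odd-degree or has a sign change'' is a non sequitur: the cubic coefficient can also vanish (it does, for instance, when $p$ has a triple root, since the gradient of $\Delta$ vanishes along the triple-root locus), leaving a polynomial of degree $\le 2$ that could a priori be positive everywhere. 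More seriously, the case $\Delta(p)>0$ --- which you yourself flag as the delicate one --- is left as a list of things you ``would try''; none of the proposed routes (inspecting $c_1$, evaluating at a convenient interior $s$, computing the discriminant of the quartic) is carried out, and a quartic with positive leading coefficient and positive constant term is in general positive everywhere, so some specific use of the half-flat constraint is indispensable and has not been identified.

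The structural input you are missing is exactly what the paper supplies via Lemma~\ref{lemma:hitchinpoly}: along the Hitchin flow the quantity $(\det g)^6=\tfrac34\Delta(q(t))$ satisfies $\tfrac23((\det g)^3)'=p(w,-z)-w\tfrac{\partial p}{\partial u_1}(y,-x)+z\tfrac{\partial p}{\partial u_2}(y,-x)$, the right-hand side being the Hermitian defect \eqref{eq:Hermloc}. Hence if the line carries no Hermitian structure, $\Delta$ restricted to it is a \emph{strictly monotonic} polynomial in $s$, which must change sign --- contradiction. If the line does contain a Hermitian point, the paper re-bases the flow there: by the equivariance of $Q$ this replaces the pair $(q_0,p)$ by $(q_0,p')$ with $p'=(\lambda_1u_1+\lambda_2u_2)(u_1^2+u_2^2)$, so that $\Delta(p')<0$ and your negative-leading-coefficient argument applies. (Note that this re-basing acts on $q_0$ and $p$ \emph{simultaneously}, which is why it is legitimate even though, as you correctly observe, $\GL(2,\bR)$ does not fix the particular line.) Without the monotonicity statement, or some equivalent exploitation of the half-flat condition, your plan does not close; with it, the two remaining cases collapse as above.
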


\begin{proof}
Suppose for contradiction that \( \Delta>0 \) on the whole affine line \eqref{eqn:affineline}. Then we obtain a solution to the Hitchin flow for initial data which has intrinsic torsion given by \( p \). 

If the line \eqref{eqn:affineline} does not contain any Hermitian structures, then \( \Delta \) is strictly monotonic by Lemma~\ref{lemma:hitchinpoly}. This implies that \( \Delta(\tfrac13u_1^3-u_1u_2^2+sp) \) is a monotonic polynomial in \( s \) with no roots, which is absurd.

If the line contains a Hermitian structure, then this can be chosen as the initial point of the flow. In terms of polynomials this amounts to acting by an element of \( \GL(2,\bR) \) which preserves the sign of \( \Delta \). Then \( p \) has the form 
\begin{equation*}
p=\lambda_1(u_1^3+u_1u_2^2)+\lambda_2(u_1^2u_2+u_2^3).
\end{equation*}
A computation now shows that
\begin{equation*}
\Delta(s)=\Delta(\tfrac13u_1^3-u_1u_2^2+sp)
\end{equation*}
is a fourth degree polynomial in \( s \) with negative leading coefficient, so it cannot be positive for all \( s \).
\end{proof}

The curve  \( (t_-,t_+)\to\G\times (t_-,t_+) \) given by \( t\mapsto \vartheta(t)= (e,t) \) is a geodesic which is orthogonal to the orbits. In order to obtain non-trivial examples of complete cohomogeneity one metrics, we therefore need to assume that at least one of the boundary points, say, \( t_- \) is finite. In this case, \( \vartheta(t_-) \) belongs to a special orbit which necessarily must be singular. Otherwise, it would be possible to extend the flow past \( t_-\) in contradiction with the maximality assumption.  

The special stabilizer \( \LH \) is a closed subgroup which is determined, at the Lie algebra level, by the null space of the limit metric. In the language of Lemma~\ref{lemma:endpoint}, we need \( g(t) \) to be defined on \( [t_-,t_+) \), say,
\begin{equation*} 
\lim_{t\downarrow t_-} g(t)=\begin{pmatrix} x & y \\ z & w \end{pmatrix}; 
\end{equation*}
\( \lh \) is then the \( \SO(3) \)-invariant subspace of \( \g \) that contains the null space of \( (xe^1+ye^2)^2 + (ze^1+we^2)^2 \). It follows that \( \LH \) is either three-dimensional or \( \G=\LH \). 

As \( \G \) preserves the metric, the exponential map defines a local diffeomorphism
\begin{equation*}
\G\times_{\LH} \mathcal B\to N,
\end{equation*}
where \( \mathcal B\subset V \) is an open ball inside the normal space, \( V \), to the orbit at \( \vartheta(t_-) \). Since we are concerned with smoothness about the special orbit, there is no harm done by working with \( \G\times_{\LH} V \) rather than \( \G\times_{\LH} \mathcal B \). 

Up to considering a connected component, we can  assume that \( \G \) is connected. Up to taking a covering space we may, in fact, assume \( \G \) is simply-connected. In addition, we can assume that \( \LH \) is connected, since the action of \( \LH/\LH_e \) on \( \G\times_{\LH_e} V \) is free and properly discontinuous, making \( \G\times_{\LH_e} V\to \G\times_{\LH} V \) into a covering map. 

In summary, we are left to consider a manifold of the form \( \G\times_{\LH} V \), where \( \rho\colon \LH\to \GL(V) \)
is a sphere transitive representation. In particular, this implies that \( \dim \LH=\dim V-1 \). 

If \( \G=\LH \) the group must act sphere transitively on \( \bR^7 \). For dimensional reasons this implies \( \G \) is diffeomorphic to \( S^6 \), which is absurd. We can therefore focus on the case when \( \LH \) is three-dimensional and \( V \) is a polar four-dimensional representation. Then we have \( V=\bH \) as a representation of \( \LH=\SU(2) \).

\( \LH \) acts on \( \G\times V \) on the right with fundamental vector fields of the form
\begin{equation*} 
A^*_{g,v} = \left.\frac{d}{dt}\right\lvert_{t=0} (g\exp (t A), \rho(-t A) v)= (L_{g*}A, -\rho_{*e}(A)v), \quad  A\in\lh;
\end{equation*}
this action makes  \( \G\times V \) into an \( \LH \) structure on \( \G\times_{\LH} V \). In particular, the metric determines an \( \LH \)-equivariant mapping \( \G\times V\to S^2(\g\slash\lh\oplus V)^*  \). By \( \G \)-invariance it suffices to consider the restriction to \( V\cong\{e\}\times V \), and this gives rise to a map \( V\to S^2(\g\slash\lh\oplus V)^* \).
Similarly the \( \G_2 \)-form determines a smooth map \( V\to \Lambda^3(\g\slash\lh\oplus V)^* \).

It is now a matter of identifying the \( \G_2 \)-metric and \( 3 \)-form as maps defined on \( V\setminus\{0\} \) and to determine whether these  extend to all of \( V \). To this end let us fix a vector \( v\in V \) so as to get a map 
\begin{equation*} 
\G\times \bR\to G\times_{\LH} V, \quad (g,t)\mapsto [g,tv] 
\end{equation*}
which identifies the tangent space \( T_{[g,v]}\G\times_{\LH} V \) with \( \g\oplus\bR \). Explicitly, by choosing an \( \LH \)-invariant decomposition 
\begin{equation*}
\g=\lh\oplus\m, 
\end{equation*}
the map 
\( \g\oplus \bR \to \m\oplus V \)
is given by 
\begin{equation*}
(A,s)\mapsto ([L_{g*}A]_{\m}, sv) = (L_{g*}[A]_{\m}, sv +\rho_{*e}([A]_{\lh})v).
\end{equation*}
In other words, we have:
\begin{equation}
 \label{eqn:dictionary}
\g\ni A\mapsto \begin{cases} (A,0)& A\in\m \\ (0,\rho_{*e}([A])(v)) & A\in \lh\end{cases}, \quad \D{}{t}\mapsto (0,v).
\end{equation}

We are now in a position to prove the main result of this section: the Bryant-Salamon metric is the only complete full holonomy \( \G_2 \)-metric which arises from invariant intrinsic torsion \( \SO(3) \)-structures.

\begin{theorem}
\label{thm:clsf_G2}
There are exactly two complete, simply-connected  Riemannian manifolds with holonomy contained in \( \G_2 \) that are obtained by evolving a half-flat \( \SO(3) \)-structure with invariant intrinsic torsion. These are the flat metric on \( \bR^7 \) and the Bryant-Salamon metric on the spinor bundle over the space form \( S^3 \).

In particular, the Bryant-Salamon metric is the only one with holonomy equal to \( \G_ 2 \).
\end{theorem}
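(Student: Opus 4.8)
The plan is to show that any complete, simply-connected manifold with holonomy contained in \( \G_2 \) obtained from such an evolution is either flat \( \bR^7 \) or the Bryant--Salamon metric, and then to note that the former has trivial holonomy while the latter, by \cite{Bryant-S:excep}, has holonomy equal to \( \G_2 \). First I would dispose of vanishing intrinsic torsion: by Proposition~\ref{prop:locflat} the structure is then locally \( T^*\bR^3 \) with its flat structure, the Hitchin flow is stationary, and the evolution is the flat Riemannian product \( \bR^6\times\bR=\bR^7 \). So assume \( p\neq0 \). Normalising the reference frame so that \( g(0)=1 \), the flow line in \( B^3 \) becomes \( \{\tfrac13u_1^3-u_1u_2^2+sp\} \), which by Lemma~\ref{lem:alt_CG} meets the discriminant hypersurface; hence \( (t_-,t_+) \) is a proper interval, and, exchanging the two ends if need be, \( t_- \) is finite so that \( \vartheta(t_-) \) lies on a singular orbit. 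Together with the structural reduction just set up, a neighbourhood of that orbit is \( \G\times_{\LH}\bH \) with \( \G \) one of the groups of Table~\ref{table:poly}, \( \LH=\SU(2) \), and \( \LH\subset\G \) pinned down by the null space of the limiting quadratic form. Lemma~\ref{lemma:endpoint} then says that \( (p,q(t_-)) \) is, up to \( \GL(2,\bR) \), one of: (a) \( q(t_-)=0 \); (b) \( p=u_1(u_1^2+u_2^2) \), \( q(t_-)=\lambda u_1^3 \); (c) \( p=u_1u_2^2 \), \( q(t_-)=\lambda u_1^3 \); (d) \( p=u_1(u_1^2-u_2^2) \) with \( q(t_-) \) the cube of a linear factor of \( p \), the three choices being \( \Sigma_3 \)-equivalent by Remark~\ref{remark:changing}.

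Next I would rule out (a), (b) and (c). In (a) the orbit at \( t_- \) carries a torsion-free \( \SU(3) \)-structure, so a neighbourhood is a metric cone on a non-flat nearly-K\"ahler six-manifold, which is singular at the apex; thus \( M \) is incomplete (this is the nearly-K\"ahler solution, whose evolution is the cone). In (b) the line \( \{q(t_-)+sp\}=\{u_1((\lambda+s)u_1^2+su_2^2)\} \) has positive discriminant only on a bounded \( s \)-interval: one end is the triple root \( \lambda u_1^3 \), where \( g \) stays finite and \( \SU(2) \) collapses, but the other end is the double root \( -\lambda u_1u_2^2 \), which is not of the form \( f^3 \), so by Lemma~\ref{lemma:endpoint} the solution cannot be continued there and \( g \) becomes unbounded while (by Lemma~\ref{lemma:hitchinpoly}) \( t \) stays finite; the transverse geodesic \( \vartheta \) then has finite length and is inextendible, so \( M \) is incomplete. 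In (c) one has \( \G=\SU(2)\times\bR^3 \) and \( \LH=\SU(2)\times\{0\} \), whence \( \G\times_{\LH}\bH=\bR^3\times\bH \) with \( \bR^3 \) acting by translations; since \( \bR^3 \) and \( \bH \) are inequivalent \( \SU(2) \)-modules there are no equivariant cross terms, so the metric splits as a flat \( \bR^3 \) times a complete \( \SU(2) \)-invariant Ricci-flat metric on \( \bR^4 \), and the latter, being rotationally symmetric with a round collapse at the core, is flat; so \( M \) is the flat \( \bR^7 \).

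It then remains to treat (d), which gives Bryant--Salamon. Here \( \G=\SU(2)\times\SU(2) \), \( \LH \) is the diagonal \( \SU(2) \), and \( \G\times_{\LH}\bH=\Spin(4)\times_{\Sp(1)}\bH \), the spinor bundle over \( S^3 \). Normalising \( q(t_-)=\lambda u_1^3 \), the flow line \( \{u_1((\lambda+s)u_1^2-su_2^2)\} \) has discriminant \( 4(\lambda+s)s^3 \), which, for a suitable sign of \( \lambda \), is positive for all \( s \) on one side of the singular value and along which \( t\to+\infty \); so there is no further boundary and completeness reduces to smoothness of the \( \G_2 \)-structure across the zero section. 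For this I would write the evolving three-form \( L_{g(t)}\gamma \) and the metric as maps \( V\setminus\{0\}\to\Lambda^3(\m\oplus V)^* \) and \( V\setminus\{0\}\to S^2(\m\oplus V)^* \) by means of the dictionary~\eqref{eqn:dictionary}, expand them in the radial coordinate \( r=|v| \) on \( V=\bH \), and check that they extend smoothly and non-degenerately across \( r=0 \) with the collapsing \( S^3 \) asymptotically round. Granting this, the metric is complete, has holonomy in \( \G_2 \), and is cohomogeneity one under \( \SU(2)\times\SU(2) \) with exactly the orbit data of the Bryant--Salamon metric, hence coincides with it; the three \( \Sigma_3 \)-related normalisations of \( q(t_-) \) merely re-present this single metric through the three ``triality'' cohomogeneity-one actions. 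Collecting the cases, the only complete simply-connected examples are flat \( \bR^7 \) and Bryant--Salamon, and since the former has trivial holonomy the latter is the unique one with holonomy equal to \( \G_2 \).

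The main obstacle is the smoothness analysis at the singular orbit in case (d): one has to carry the cohomogeneity-one \( \G_2 \)-data through the identification with \( \G\times_{\LH}\bH \), expand in the normal coordinate, and verify the delicate ``rounding'' conditions separating a smooth collapse from a cone or orbifold singularity, and in particular confirm that \emph{no} obstruction is met, so that the evolution really does close up to the complete Bryant--Salamon metric. A secondary technical point is case (b), where one must check that the blow-up of \( g(t) \) at the finite endpoint \( t_+ \) is a genuine metric incompleteness rather than an asymptotically conical end.
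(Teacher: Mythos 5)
Your proposal follows essentially the same route as the paper: zero torsion gives flat $\bR^7$; Lemma~\ref{lem:alt_CG} forces a finite endpoint and hence a singular orbit; the local model $\G\times_{\LH}\bH$ with $\LH=\SU(2)$ is set up exactly as in Section~\ref{subsection:cohom1}; Lemma~\ref{lemma:endpoint} yields the same case list; the case $p=u_1(u_1^2+u_2^2)$ is killed by the bad second endpoint $-\lambda u_1u_2^2$ precisely as in the paper; and the triality discussion via Remark~\ref{remark:changing} matches. The one genuinely different ingredient is your treatment of $q(t_-)=0$: the paper rules this out group-theoretically ($\lh=\g$ would force $\G$ to act sphere-transitively on $\bR^7$, hence $\G\cong S^6$), whereas you identify the solution through $q=0$ as the metric cone over the nearly-K\"ahler $S^3\times S^3$ and invoke the apex singularity. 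Both work; yours is more geometric but implicitly uses that the only flow line through the origin is $q(s)=sp$ with $\Delta(p)>0$, i.e.\ the cone.

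The substantive gap is that the decisive step — the smooth-extension analysis across the zero section of $V=\bH$ — is announced but not carried out, and it is exactly this computation that produces the theorem's dichotomy. In case $p=u_1u_2^2$ the paper's explicit expansion (via the dictionary \eqref{eqn:dictionary} and the coordinate $t$, with $e^2\mapsto -2t^{-1}dx_1$ etc.) shows the metric extends smoothly \emph{only} for $\lambda=1/3$, where it is flat; your shortcut ``rotationally symmetric Ricci-flat with a round collapse, hence flat'' is fine as an implication ``complete $\Rightarrow$ flat'', but note that your representation-theoretic claim that there are no equivariant cross terms is false as stated ($\bH$ contains a trivial summand which pairs with the trivial module $\m$ when $\G=\SU(2)\times\bR^3$); one must instead observe that the metric is diagonal by construction. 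You also treat only one of the two Lie algebras the paper attaches to $p=u_1u_2^2$ (the formal products $u_1\cdot u_2^2$ and $u_2\cdot u_1u_2$ give $\so(3)\ltimes\bR^3$ and $\so(3)\oplus\bR^3$). In case $p=u_1(u_1^2-u_2^2)$ the paper substitutes $z=s^{1/2}$ and exhibits the metric \eqref{eqn:bryantsalamon} as a sum of strictly positive even functions of $z$ times invariant tensors, which is what proves smoothness for \emph{every} $\lambda>0$ and identifies the result with the Bryant--Salamon formula; without this your argument establishes only that \emph{if} the structure closes up smoothly then it must be Bryant--Salamon, not that it actually does. So the proposal is a correct skeleton of the paper's proof, with the two explicit singular-orbit computations still to be supplied.
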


\begin{proof}
A six-manifold with an \( \SO(3) \)-structure that has zero intrinsic torsion is flat and so evolves trivially under the Hitchin flow. This means we obtain the flat metric on a seven-dimensional manifold \( N \). Assuming \( N \) is simply-connected and complete we obtain flat \( \bR^7 \).

Let us next consider the more interesting case of non-zero intrinsic torsion. By Lemma~\ref{lem:alt_CG} the maximal solution, \(s\mapsto p+qs \), to the Hitchin flow is defined on an open interval \( I \subsetneq \bR \). 

If \( I=(s_-,s_+) \) is bounded the fact that \( \Delta(q+sp) \) is a polynomial of fourth degree in \( s \) implies that the integral
\( \int_{s_-}^{s_+} (\tfrac43\Delta)^{-1/6} ds \) is finite. In particular, the orthogonal geodesic has finite length. Completeness then requires that both boundary points \( s_\pm \) corresponds to special orbits. 

By the same argument, if \( I=(s_-,+\infty) \) there is a special orbit at \( s_- \), and similarly for \( (-\infty,s_+) \). 

In conclusion completeness in the non-flat case is only possible if there is at least one special orbit.

Up to \( \Sigma_3 \) action we may  assume \( \det g \) is positive on \( I \) (corresponding to points on the principal orbits).

According to Lemma~\ref{lemma:endpoint} there are now three cases we need to consider. 

\vspace{.5mm}

\eqref{itm1:endpoint} If  \( p=u_1(u_1^2+u_2^2) \) the endpoint is necessarily of the form  \( q=\lambda u_1^3 \). A computation gives 
\begin{equation*} 
\Delta (q+sp)=-4s^3(\lambda+s),
\end{equation*}
so that the maximal interval is either \( s\in (-\lambda,0) \) or \( (0,-\lambda) \), depending on the sign of \( \lambda \). Both \( s=-\lambda \) and \( s=0 \) should correspond to special orbits. However,  \( q-\lambda p = -\lambda u_1u_2^2 \) does not satisfy the conditions of Lemma~\ref{lemma:endpoint}. It follows that no complete metric arises in this case.

\vspace{.5mm}

\eqref{itm2:endpoint} If \( p=u_1u_2^2 \) and \( q=\lambda u_1^3 \), \( \lambda>0 \), we find that
\begin{equation*} 
\Delta (q+sp)=-4\lambda s^3 
\end{equation*} 
so that \( s\in(-\infty,0) \) and
\begin{equation*}
\int_{-\infty}^0 (\tfrac43\Delta)^{-1/6} ds = \int (-3\lambda s^3)^{-1/6} ds =+\infty.
\end{equation*}
Similarly, for \( \lambda<0 \), we find that \( s \) ranges in \( (0,+\infty) \) and
\( \int_0^{+\infty} (\tfrac43\Delta)^{-1/6} ds\) is infinite.

Assuming \( \lambda>0 \), we have the decomposition
\begin{equation*}
\lambda u_1^3+su_1u_2^2 = u_1(\sqrt\lambda u_1 + \sqrt{-s} u_2)(\sqrt\lambda u_1 - \sqrt{-s} u_2),
\end{equation*}
which leads to \( x=(3\lambda)^{1/3} \), \( w=\sqrt{-s} (3\lambda)^{-1/6} \) and \( y=z=0 \).
At the boundary point, when \( s=0 \), the metric degenerates to  a multiple of 
\begin{equation*}
(e^1)^2+(e^3)^2+(e^5)^2,
\end{equation*}
so that we have the null space \(\lh=\Span{e_2,e_4,e_6} \).

There are two Lie algebras corresponding to this choice of \( p \); these depend on the choice of formal product as either \( u_1\cdot u_2^2 \) or \( u_2 \cdot u_1u_2 \). The associated structure constants are given by
\begin{equation*}
(-36+45,-46,16+25,-62,-14-23,-24), \quad (0,46,0,62,0,24),
\end{equation*}
respectively.

We can solve explicitly in \( t \) using that
\begin{equation*}
t=\int_0^s (3\lambda)^{-1/3}(-s)^{-1/2} ds = -2(3\lambda)^{-1/3}(-s)^{1/2}.
\end{equation*}
This gives \( s=-\tfrac14t^2(3\lambda)^{2/3} \). The metric can now be described as a map 
\begin{multline*}
\G\times(-\infty,0)\to S^2(\g\oplus\Span{\partial\slash\partial t})^*,\\
(g,t)\mapsto(3\lambda)^{2/3}((e^1)^2+(e^3)^2+(e^5)^2)\\
+\tfrac14t^2(3\lambda)^{1/3}((e^2)^2+(e^4)^2+(e^6)^2)+dt^2, 
\end{multline*}
and we can choose \( \m=\Span{e_1,e_3,e_5} \) as our \( \LH \)-invariant complement of \( \lh =\su(2) \).

We already know that the manifold is of the form \( \G\times_{\SU(2)}\bH \). By fixing coordinates \( x=x_0+x_1i + x_2 j + x_3 k \) on \( \bH \) and identifying \( \lh \) with the imaginary quaternions, the map \eqref{eqn:dictionary} becomes
\begin{equation*}
e^2\mapsto -2 t^{-1}dx_1, \quad e^4\mapsto -2t^{-1} dx_2, \quad e^6\mapsto -2t^{-2}dx_3.
\end{equation*}
The restricted metric \( \bR\to S^2(\m\oplus \bH)^* \) therefore reads
\begin{equation*}
t\mapsto(3\lambda)^{2/3}((e^1)^2+(e^3)^2+(e^5)^2)+(3\lambda)^{1/3}(dx_1^2+dx_2^2+dx_3^2)+dx_0^2,
\end{equation*}
and the equivariant extension \( \bH\to  S^2(\m\oplus \bH)^*\) takes the form
\begin{multline*}
x\mapsto (3\lambda)^{2/3}((e^1)^2+(e^3)^2+(e^5)^2)
+(3\lambda)^{1/3}(dx_0^2+dx_1^2+dx_2^2+dx_3^2) \\
+ \frac{1-(3\lambda)^{1/3}}{4\abs{x}^2}d(\abs{x}^2)\otimes d(\abs{x}^2).
\end{multline*}
Clearly this is only smooth when \( \lambda=1\slash3 \), and in this case we are left with the flat metric.

\vspace{.5mm}

\eqref{itm3:endpoint} The case \( u_1(u_1^2-u_2^2) \) corresponds to three different structures on the same Lie algebra \( \so(3)\oplus\so(3) \); these depend on the formal products
\begin{equation*}
u_1\cdot (u_1^2-u_2^2),\quad  (u_1+u_2)\cdot (u_1^2-u_1u_2), \quad (u_1-u_2)\cdot  (u_1^2+u_1u_2).
\end{equation*}
Explicitly the possibilities are \( (u,\g) \), \( (u \ell ,\g) \) and \( (u\ell^2, \g) \) with
\begin{equation}
 \label{eqn:trialityell}
\ell=\begin{pmatrix} -\frac12  & \frac12\\  -\frac32 & -\frac12\end{pmatrix}
\end{equation}
and \( (u,\g) \) given by 
\begin{equation*}
 (e^{36}+e^{45},e^{35}+e^{46},-e^{16}-e^{25},-e^{15}-e^{26},e^{14}+e^{23},e^{13}+e^{24}).
\end{equation*}

At the boundary point \( q \) has the form 
\begin{equation*}
\lambda u_1^3, \quad \lambda (u_1-u_2)^3,\quad  \lambda (u_1+u_2)^3,\quad \lambda\in\bR.
\end{equation*}
As explained in Remark~\ref{remark:changing},  a curve \( g(t) \) can also be viewed as \( g(t)\ell \) up to changing the initial data from \( (u,\g) \) to \( (u\ell,\g) \). Notice that this change has no not effect on the intrinsic torsion polynomial \( p= u_1(u_1^2-u_2^2) \) but cycles between the three possible boundary points \( q \).

This observation means that we can assume \( q=\lambda u_1^3 \) with \( \lambda\neq0 \) (the case when \( q=0 \), meaning \( \lh=\g \), was ruled out in section~\ref{subsection:cohom1}).

Then, for  \( \lambda>0 \), we can write
\( q(s)=u_1^3(s+\lambda)-su_1 u_2^2, \) which has positive discriminant on \( (0,+\infty) \). We then have
\begin{equation*} 
q(s)= u_1(\sqrt{s+\lambda}\,u_1+\sqrt s \,u_2)(\sqrt{s+\lambda}\,u_1-\sqrt s\, u_2),
\end{equation*}
which gives
\begin{equation*}
g=\diag((3(s+\lambda))^{1/3},(3(s+\lambda))^{-1/6} \sqrt s), \quad \det g = (3(s+\lambda))^{1/6}\sqrt s. 
\end{equation*}
Therefore
\begin{equation*}
\int_0^{+\infty} \frac{ds}{\det g} =\int_0^{+\infty}\tfrac43\Delta^{-1/6}ds= +\infty.
\end{equation*}

We first consider  the case \( (u,\g) \). The metric degenerates to 
\begin{equation*}
\lambda^{2/3}(( e^1)^2 + (e^3)^2+(e^5)^2),
\end{equation*}
showing that the special stabilizer is \( \lh=\Span{e_2,e_4,e_6}\cong\su(2) \).

As a map \( \G\times(0,+\infty)\to  S^2(\g\oplus\Span{\partial\slash\partial t})^* \) the metric is given by
\begin{multline*}
(e,t)\mapsto(3(s(t)+\lambda))^{2/3}(( e^1)^2 + (e^3)^2+(e^5)^2)\\
+(3s(t)(s(t)+\lambda))^{-1/3}(( e^2)^2 + (e^4)^2+(e^6)^2) +dt^2.
\end{multline*}
This metric is not explicit in the coordinate \( t \), and the coordinate \( s \) is not suitable for analyzing the smooth extension, since \( ds=0 \) at \( s=0 \). Instead we change the variable to \( z=s^{1/2} \) which is a proper coordinate on the whole geodesic, since
\( dz = \tfrac12 (3(s+\lambda))^{1/6} dt \). In terms of \( z \) the metric becomes
\begin{multline*}
(3(z^{2}+\lambda))^{2/3}(( e^1)^2+ (e^3)^2+(e^5)^2)\\
+z^2(3(z^{2}+\lambda))^{-1/3}(( e^2)^2 + (e^4)^2+(e^6)^2) +4 (3(z^{2}+\lambda))^{-1/3}dz^2.
\end{multline*}

We fix an \( \LH \)-invariant complement of \( \lh \) in \( \g \): \( \m=\Span{e_1,e_3,e_5} \). This enables us to express the map
\(  (0,+\infty)\to S^2(\m\oplus\bH)^* \) as
\begin{multline}
\label{eqn:bryantsalamon}
z\mapsto 3^{-1/3}\bigl(4(z^{2}+\lambda)^{-1/3}(dx_0^2+dx_1^2 + dx_2^2+dx_3^2) \\
+3(z^{2}+\lambda)^{2/3}(( e^1)^2 + (e^3)^2+(e^5)^2)\bigr).
\end{multline}
Both terms have the same form, namely a strictly positive even function in \( z \) multiplied by an invariant element of \( S^2(\m\oplus\bH)^* \); this shows that the metric extends smoothly.
We therefore find a smooth complete metric for each \( \lambda>0 \). Notice that the parameter can be normalized to \( \lambda=1 \) by rescaling \( z \). These are exactly the Bryant-Salamon metrics appearing in \cite[Case ii, p. 840]{Bryant-S:excep}.

With a similar argument, if \( q=\mu u_1^3 \) with \( \mu\) negative we recover \eqref{eqn:bryantsalamon} with \( \lambda=-\mu \).

The cases \( (u\ell,\g) \) and \( (u\ell^2,\g) \) also give rise to the Bryant-Salamon metric on \( (\SU(2)\times\SU(2))\times_{\SU(2)}\bH \). Indeed, this metric is preserved by the action of \( \SU(2) \) (induced by right multiplication on \( \bH \)) that commutes with the action of \( \SU(2)\times\SU(2) \) (induced by left multiplication in the group itself). This means we obtain three isometric cohomogeneity one actions of \( \SU(2)^2 \); they correspond to its different inclusions in
\( \SU(2)^3 \). A change in the choice of the inclusion determines an automorphism of order three which in concrete terms is given by
\begin{equation}
\label{eq:3sym_aut}
(g,h)\mapsto  (h^{-1},gh^{-1}).
\end{equation}

For left-invariant tensors that are also right-invariant under the diagonal action of \( \SU(2) \), the map \eqref{eq:3sym_aut} induces an identification 
which at the Lie algebra level reads
\begin{equation*}
\ell\colon \su(2)\oplus\su(2)\to \su(2)\oplus\su(2), \quad (A_1,A_2)\mapsto(-A_2,A_1-A_2).
\end{equation*}
Identifying \( \su(2)\oplus\su(2) \) with \( \bR^6 \) through the  frame \( u \), we see that \( \ell \) is represented by the matrix \eqref{eqn:trialityell}.

In summary, the three  metrics arising from the evolution of \( (u,\g) \), \( (u\ell, \g) \) and \( (u\ell^2,\g) \) all correspond to the Bryant-Salamon metric, but they are realized with respect to different choices of cohomogeneity one action.
\end{proof}

\begin{remark}
The group of order three generated by \eqref{eqn:trialityell} is the ``triality'' symmetry mentioned in \cite{Atiyah-W:G2metrics}. By taking an appropriate \( \LU(1) \) quotient, this symmetry relates a deformation and two small resolutions of the same conifold.

This triality stems from the fact that the principal orbit \( \SU(2)\times \SU(2) \) is a three-symmetric space, but the \( \SO(3) \)-structures we are considering are not invariant under this symmetry,  unlike the nearly-K\"ahler metric.
\end{remark}

\begin{remark}
In \cite[Corollary 6.4]{Karigiannis-L:Deform_coni},  Karigiannis and Lotay showed that the Bryant-Salamon metric is the only complete \( \G_2 \)-metric that approaches the cone over \( S^3\times S^3 \) sufficiently fast. In our result, we do not make any assumptions on the asymptotical behaviour of the metric. A priori we could have found complete metrics that were not conical at infinity.

Regarding our implicit cohomogeneity one assumption, this can be relaxed if we ask for a complete holonomy \( \G_2 \)-manifold with a real analytic hypersurface which carries a compatible invariant intrinsic torsion \( \SO(3) \)-structure. Then uniqueness follows by combining Theorem~\ref{thm:citclassifies} and the Cauchy-Kovalevskaya theorem.
\end{remark}

\begin{remark}
In case \eqref{itm1:endpoint} of the above proof, meaning when \( p=u_1(u_1^2+u_2^2) \), we showed that the corresponding metrics are not complete, because they do not extend smoothly at one of the boundary points. In \cite{Belgun:Ontheboundary}, however, it was shown that they are ``half-complete'', in the sense that they extend smoothly at the other boundary.
\end{remark}

\bibliographystyle{plain}

\begin{thebibliography}{10}

\bibitem{Agricola-al:NaturallyReductive}
I.~Agricola, A.~Ferreira, and T.~Friedrich.
\newblock The classification of naturally reductive homogeneous spaces in
  dimension \( n\leqslant6 \).
\newblock {\em arXiv:1407.4936 [math.DG]}, 2014.
\newblock {\em Differential Geom. Appl.}, 39:59--92, 2015.

\bibitem{Ambrose-S:OnHomogeneousRimeannian}
W.~Ambrose and I.~M. Singer.
\newblock On homogeneous {R}iemannian manifolds.
\newblock {\em Duke Math. J.}, 25:647--669, 1958.

\bibitem{Atiyah-W:G2metrics}
M.~Atiyah and E.~Witten.
\newblock M-theory dynamics on a manifold of \( \G_2 \) holonomy.
\newblock {\em Adv. Theor. Math. Phys.}, 6(1):1--106, 2002.

\bibitem{Bazaikin-B:completeG2}
Y.~V. Bazaikin and O.~A. Bogoyavlenskaya.
\newblock Complete {R}iemannian metrics with holonomy group \( \G_2 \) on
  deformations of cones over \( S^3\times S^3 \).
\newblock {\em Math. Notes}, 93(5-6):643--653, 2013.
\newblock Translation of Mat. Zametki {{\bf{9}}3} (2013), no. 5, 645--657.

\bibitem{Bedulli-B:TheRicciTensor}
L.~Bedulli and L.~Vezzoni.
\newblock The {R}icci tensor of \( \SU(3) \)-manifolds.
\newblock {\em J. Geom. Phys.}, 57(4):1125--1146, 2007.

\bibitem{Belgun:Ontheboundary}
F.~Belgun, V.~Cort{\'e}s, M.~Freibert, and O.~Goertsches.
\newblock On the boundary behaviour of left-invariant {H}itchin and hypo flows.
\newblock {\em arXiv:1405.1866 [math.DG]}, 2014.

\bibitem{Bonan:Surdesvarietes}
E.~Bonan.
\newblock Sur des vari\'et\'es riemanniennes \`a groupe d'holonomie \( \G_2 \) ou \( \rm{Spin}(7) \).
\newblock {\em C. R. Acad. Sci. Paris S\'er. A-B}, 262:A127--A129, 1966.

\bibitem{Brandhuber:Gaugetheoryat}
A.~Brandhuber, J.~Gomis, S.~S. Gubser, and S.~Gukov.
\newblock Gauge theory at large {N} and new \( \G_2 \) holonomy metrics.
\newblock {\em Nuclear Phys. B}, 611(1-3):179--204, 2001.

\bibitem{Bryant:Calibrated}
R.~L. Bryant.
\newblock Calibrated embeddings in the special Lagrangian and coassociative cases. 
\newblock Special issue in memory of Alfred Gray (1939–1998). {\em Ann. Global Anal. Geom.} 18(3-4):  405--435, 2000.

\bibitem{Bryant:Non-embedding}
R.~L. Bryant.
\newblock Non-embedding and non-extension results in special holonomy.
\newblock In {\em The many facets of geometry}, pages 346--367. Oxford Univ.
  Press, Oxford, 2010.

\bibitem{Bryant-S:excep}
R.~L. Bryant and S.~M. Salamon.
\newblock On the construction of some complete metrics with exceptional
  holonomy.
\newblock {\em Duke Math. J.}, 58(3):829--850, 1989.

\bibitem{Chiossi-M:SO3Structures}
S.~Chiossi and {\'O}.~Maci{\'a}.
\newblock \( \SO(3) \)-structures on 8-manifolds.
\newblock {\em Ann. Global Anal. Geom.}, 43(1):1--18, 2013.

\bibitem{Chiossi-S:TheIntrinsicTorsion}
S.~Chiossi and S.~Salamon.
\newblock The intrinsic torsion of \( \SU(3) \) and \( \G_2 \) structures.
\newblock In {\em Differential geometry, {V}alencia, 2001}, pages 115--133.
  World Sci. Publ., River Edge, NJ, 2002.

\bibitem{Chong-al:contr}
Z.~W. Chong, M.~Cveti{\v{c}}, G.~W. Gibbons, H.~L{\"u}, C.~N. Pope, and
  P.~Wagner.
\newblock General metrics of \( \G_2 \) holonomy and contraction limits.
\newblock {\em Nuclear Phys. B}, 638(3):459--482, 2002.

\bibitem{Cleyton-S:G2}
R.~Cleyton and A.~Swann.
\newblock Cohomogeneity-one \( \G_2 \)-structures.
\newblock {\em J. Geom. Phys.}, 44(2--3):202--220, 2002.

\bibitem{Cleyon-S:Einstein}
R.~Cleyton and A.~Swann.
\newblock Einstein metrics via intrinsic or parallel torsion.
\newblock {\em Math. Z.}, 247(3): 513--528, 2004. 

\bibitem{Conti-M:HarmonicStructuresAnd}
D.~Conti and T.~B. Madsen.
\newblock Harmonic structures and intrinsic torsion.
\newblock {\em Transform. Groups} 20(3): 699-723, 2015.

\bibitem{Cvetic-al:G2}
M.~Cveti{\v{c}}, G.~W. Gibbons, H.~L{\"u}, and C.~N. Pope.
\newblock Cohomogeneity one manifolds of \( \rm{Spin}(7) \) and \( \G_2 \) holonomy.
\newblock {\em Phys. Rev. D (3)}, 65(10):106004, 29, 2002.

\bibitem{Gibbons-al:domainwalls}
G.~W. Gibbons, H.~L{\"u}, C.~N. Pope, and K.~S. Stelle.
\newblock Supersymmetric domain walls from metrics of special holonomy.
\newblock {\em Nuclear Phys. B}, 623(1-2):3--46, 2002.

\bibitem{Giovannini:SpecialStructuresAnd}
D.~Giovannini.
\newblock {\em Special structures and symplectic geometry}.
\newblock PhD thesis, University of {T}urin, 2003.

\bibitem{Hitchin:Stableformsand}
N.~Hitchin.
\newblock Stable forms and special metrics.
\newblock In {\em Global differential geometry: the mathematical legacy of
  {A}lfred {G}ray ({B}ilbao, 2000)}, volume 288 of {\em Contemp. Math.}, pages
  70--89. Amer. Math. Soc., Providence, RI, 2001.

\bibitem{Karigiannis-L:Deform_coni}
S.~Karigiannis and J.~Lotay.
\newblock Deformation theory of \( \G_2 \) conifolds.
\newblock {\em arXiv:1212.6457 [math.DG]}, 2012.

\bibitem{KobayashiNomizu:vol1}
S.~Kobayashi and K.~Nomizu.
\newblock {\em Foundations of differential geometry. {V}ol {I}}.
\newblock Interscience Publishers, a division of John Wiley \& Sons, New
  York-Lond on, 1963.

\bibitem{Nikonorov:CompacthomogeneousEinstein}
Y.~G. Nikonorov and E.~D. Rodionov.
\newblock Compact homogeneous {E}instein \( 6 \)-manifolds.
\newblock {\em Differential Geometry and its Applications}, 19(3):369--378,
  2003.

\bibitem{Pope:HomogeneousEinsteinmetrics}
C.~N. Pope.
\newblock Homogeneous {E}instein {M}etrics on \(\SO(n) \).
\newblock {\em arXiv:1001.2776}, 2010.

\bibitem{Carrion:instanton}
R.~Reyes~Carri{\'o}n.
\newblock A generalization of the notion of instanton.
\newblock {\em Differential Geom. Appl.}, 8(1):1--20, 1998.

\bibitem{Tricerri-V:homstruc}
F.~Tricerri and L.~Vanhecke.
\newblock {\em Homogeneous structures on {R}iemannian manifolds}, volume~83 of
  {\em London Mathematical Society Lecture Note Series}.
\newblock Cambridge University Press, Cambridge, 1983.

\end{thebibliography}

\end{document}